\title{Maximal amenability of the generator subalgebra in $q$-Gaussian von Neumann algebras}
\author{\textsc{Sandeepan Parekh, Koichi Shimada, Chenxu Wen}}
\date{}
\newcommand\norm[1]{\left\lVert#1\right\rVert}
\newtheorem{thm}{Theorem}
\newtheorem*{thm*}{Theorem}
\newtheorem*{cor*}{Corollary}
\newtheorem*{conj*}{Conjecture}
\newtheorem{prop}[thm]{Proposition}
\newtheorem*{prop*}{Proposition}
\newtheorem{cor}[thm]{Corollary}
\newtheorem{lem}[thm]{Lemma}
\newtheorem*{defn*}{Definition}
\newtheorem*{que*}{Question}
\theoremstyle{definition}
\newtheorem{defn}[thm]{Definition}
\newtheorem{rem}[thm]{Remark}
\def\H{{\mathcal{H}}}
\def\F{{\mathcal{F}}}
\def\G{{\Gamma}}
\newcommand{\qn}[1]{[{#1}]_q}             
\newcommand{\qc}[2]{\binom{#1}{#2}_q}     
\def \mkqPascal#1 
	\pgfmathsetmacro{\size}{#1}
    \def\dx{20pt}
    \def\dy{30pt}
		\pgfmathtruncatemacro{\dd}{\d-2}                  
    		\pgfmathtruncatemacro{\m}{((\h +\d)/2}        
        	\pgfmathtruncatemacro{\n}{\d - \m}
    \pgfmathsetmacro{\sizeminusone}{\size -1}
		\pgfmathtruncatemacro{\max}{#1 -1 -\r}            
    		\pgfmathtruncatemacro{\ll}{\l+1}
        	\pgfmathtruncatemacro{\rr}{\r+1}
\begin{document}

\maketitle

\begin{abstract}
In this article, we give explicit examples of maximal amenable subalgebras of the $q$-Gaussian algebras, namely,
the generator masa is maximal amenable inside the $q$-Gaussian algebras for real numbers $q$
with its absolute value sufficiently small. To achieve this,
we construct a Riesz basis in
the spirit of R{\u a}dulescu \cite{radulescu91radialmasa} and develop a structural theorem for the $q$-Gaussian algebras.
\end{abstract}

\section*{Introduction}
Starting with a real Hilbert space $\H_{\mathbb{R}}$, Voiculescu's free probability theory associates each vector $\xi\in \H_{\mathbb{R}}$ with a semi-circular element $s(\xi)$, which is the sum of the left creation operator and the annihilation operator of $\xi$ on the full Fock space of $\H_{\mathbb{R}}$. The von Neumann algebra generated by $\{s(\xi): \xi\in \H_{\mathbb{R}}\}$ turns out to be isomorphic to the free group factor $L(\mathbb{F}_{\dim(\H_{\mathbb{R}})})$, which is one of the central objects of the study of II$_1$ factors (see \cite{voiculescu92book} for details). After Voiculescu's free Gaussian functor, three generalizations were developed: the first is Bo{\. z}ejko and Speicher's $q$-Gaussian functor for $-1< q <1$ in  \cite{BozejkoSpeicher91brownian}, the second is Shlyakhtenko's approach via quasi-free states in \cite{shlyakhtenko97quasi-free}, and the third is Hiai's construction of $q$-deformed Araki-Woods algebras in \cite{hiai01q-arakiwoods} which is the combination of the first two. These constructions provide von Neumann algebras which can be seen as “deformed” free group factors. Hence it is natural to ask whether they have similar
properties to those of the free group factors and whether they are isomorphic to the free
group factors. These investigations will lead us to more profound understanding of
the free group factors; they will help us understand what presentations they admit and
what properties distinguish the free group factors from other factors.

In this paper, we will focus on the study of the structure of  Bo{\. z}ejko and Speicher's $q$-Gaussian von Neumann algebras. These von Neumann algebras are currently under intense study by various authors, and many properties are known about them: assuming $\dim \H_{\mathbb{R}}\geq 2$, the $q$-Gaussian algebras are II$_1$ factors \cite{ricard2005factoriality}, non-injective \cite{BozejkoSpeicher94CPmaps, nou04non-injectivity}, strongly solid \cite{avsec2011s-solid}. Moreover, when $\dim \H_{\mathbb{R}}<\infty$ and when $|q|$ is small (which depends on the dimension) the $q$-Gaussian algebras are isomorphic to free group factors \cite{dabrowki14freePDE, GuionnetShlyakhtenko14freetransport}.

One notable aspect of a free group factor is that it contains a distinguished subalgebra, that is, the von Neumann subalgebra generated by one of the generators of the free group. It has many interesting properties: singularity, mixing property and
maximal amenability. Since $q$-Gaussians are expected to have similar properties to those of the free group factors, it is natural to try to find a $q$-Gaussian counterpart of this subalgebra. In this context, we consider the generator subalgebra in a $q$-Gaussian algebra which is defined as the von Neumann subalgebra generated by $s(\xi)$, where $\xi\in \H_{\mathbb{R}}$ is an arbitrary fixed non-zero vector. This subalgebra coincides with the original generator subalgebra in free group factors as in $\cite{popa83maxinjective}$ when $q=0$. We also note that this subalgebra is already known to be quite useful: the factoriality of the $q$-Gaussian algebras when $\dim(\H_{\mathbb{R}})\geq 2$ relies essentially on the fact that the generator subalgebra is a maximal abelian subalgebra (masa) as in \cite{ricard2005factoriality}.  Furthermore, recently it was shown that the generator subalgebra is singular \cite{BikramMukherjee16generatormasa, wen2016singularityinq-gaussian}. 

Another motivation to study the generator subalgebra comes from Peterson's conjecture about the structure of the free group factors. Recall that in \cite{op10uniquecartan} Ozawa and Popa showed the ground-breaking result that any free group factor $L(\mathbb{F}_n)$ is strongly solid, that is, if $B\subset L(\mathbb{F}_n)$ is a diffuse amenable subalgebra, then the normalizer of $B$ inside $L(\mathbb{F}_n)$ again generates an amenable subalgebra. Strong solidity remains the deepest understanding of free group factors so far, implying other deep indecomposability results about the free group factors, such as lack of Cartan subalgebras due to Voiculescu \cite{freeentropyIII}, primeness due to Ge \cite{primenessforfreegroupfactors} and solidity due to Ozawa \cite{ozawa04solidity}. One possible generalization of strong solidity is proposed by Peterson (see the end of \cite{petersonthom11cocycle}):
\begin{conj*}
Any diffuse amenable subalgebra of a free group factor $L(\mathbb{F}_n)$ has a unique maximal amenable extension inside $L(\mathbb{F}_n)$.
\end{conj*}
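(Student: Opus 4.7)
The plan is to reduce Peterson's conjecture to a \emph{gluing property} for amenable subalgebras: if $B \subset M_1 \cap M_2$ with $B$ diffuse amenable and $M_1, M_2$ both amenable subalgebras of $L(\mathbb{F}_n)$, then the von Neumann algebra $M_1 \vee M_2$ is again amenable. Granted this, uniqueness of the maximal amenable extension of $B$ is immediate: any two such maximal extensions contain $B$, so their join is amenable and by maximality must equal both of them. Existence is standard via Zorn's lemma, using that an increasing union of amenable subalgebras is amenable.

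To establish the gluing property I would try to generalize Popa's asymptotic orthogonality property (AOP), the key technical tool behind his proof of maximal amenability of the generator MASA. For a general diffuse amenable $B$, the analogous AOP would read: for any pair of bounded sequences $(x_n), (y_n) \subset L(\mathbb{F}_n)$ each orthogonal to $B$, and any central sequence $(c_n)$ in $B' \cap L(\mathbb{F}_n)^{\omega}$, one has $\langle c_n x_n, y_n c_n \rangle \to 0$. Combined with the fact that $M_1$ and $M_2$, being amenable, each produce a hyperfinite exhaustion with asymptotically central units lying in $B' \cap M_i$, this would force all cross-conditional expectations $E_{M_1}(m_2)$ for $m_2 \in M_2$ to lie in $B$, and a Connes-type trace-class approximation would then yield amenability of $M_1 \vee M_2$.

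The main technical step is proving the generalized AOP. In Popa's MASA case the critical inputs are the tractable Fock-space combinatorics and the singularity/mixing of the generator MASA. For a general amenable $B$ neither feature is at hand, so I would instead try to use Popa's free malleable deformation $\alpha_t$ on $L(\mathbb{F}_n)$ together with a spectral-gap argument: if one can show that $\alpha_t$ converges uniformly on $B$, this transfer should propagate to any amenable extension of $B$, yielding the desired orthogonality. As a parallel route one could try to exploit the free-transport isomorphisms \cite{GuionnetShlyakhtenko14freetransport} between $q$-Gaussian algebras and $L(\mathbb{F}_n)$ together with the maximal amenability result of the present paper, to generate a rich family of maximal amenable extensions and compare them via intertwining techniques.

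The main obstacle, and the reason the conjecture is open, is exactly the propagation step: deformation-theoretic properties of $B$ (uniform convergence of $\alpha_t$, AOP, mixing) are known to \emph{not} automatically pass to an arbitrary amenable overalgebra $M \supset B$, as various counterexamples in the deformation/rigidity literature show. Overcoming this would presumably require a genuinely new structural classification of amenable subalgebras of $L(\mathbb{F}_n)$ up to intertwining, beyond the current reach of the theory. For this reason I do not expect the outlined strategy to yield a complete proof without a substantially new ingredient; it should, however, succeed in the restricted setting where $B$ is itself a maximal amenable MASA with the AOP, recovering and slightly extending Popa's original theorem.
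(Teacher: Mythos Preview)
The statement you are attempting to prove is \emph{Peterson's conjecture}, which the paper presents explicitly as an open conjecture in the introduction. The paper does not contain a proof of it; rather, the paper's main theorem provides \emph{evidence} for the conjecture by verifying it for the special class of diffuse subalgebras that sit inside a generator masa of a $q$-Gaussian algebra (for small $|q|$). So there is no ``paper's own proof'' of this statement to compare your proposal against.

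As for the proposal itself: your reduction of the conjecture to the gluing property (if $B$ is diffuse amenable inside two amenable $M_1, M_2$, then $M_1 \vee M_2$ is amenable) is correct and well known, and you rightly observe that existence follows from Zorn. But the gluing property is essentially a restatement of the conjecture, and your proposed route to it --- a generalized AOP for an arbitrary diffuse amenable $B$, obtained via uniform convergence of the free malleable deformation $\alpha_t$ on $B$ and a spectral-gap transfer to amenable overalgebras --- runs into exactly the obstruction you name at the end. Uniform convergence of $\alpha_t$ on $B$ is not known for general amenable $B$ (indeed this would already be a major result), and even when it holds it does not propagate to arbitrary amenable extensions; the known positive results (Popa, Houdayer, Boutonnet--Carderi, the present paper) all require specific structural hypotheses on $B$ such as being a masa with mixing or AOP-type properties. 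Your own final paragraph acknowledges this, so the document is really a discussion of strategy rather than a proof, and it is honest about that. The partial conclusion you draw --- that the approach recovers the case where $B$ is a maximal amenable masa with AOP --- is precisely the regime covered by the literature the paper cites (\cite{houdayer14gammastability}, \cite{wen15radialmasa}, \cite{BrothierWen15cup}) and by the paper's own main theorem.
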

Evidences of Peterson's conjecture can be found in \cite{houdayer14gammastability},\cite{wen15radialmasa},\cite{BrothierWen15cup}. In particular, any generator subalgebra in a free group factor is the \textit{unique} maximal amenable extension of any of its diffuse subalgebras.

In this paper, we prove that the generator subalgebra is in fact maximal amenable inside $q$-Gaussian algebras, when the absolute value of $q$ is sufficiently small. Moreover, the result does not depend on the dimension of the real Hilbert space that we start with. In fact, our result confirms Peterson's conjecture for any diffuse subalgebra of the generator subalgebra:

\begin{thm*}
Let $\H_{\mathbb{R}}$ be a real Hilbert space. Let $q$ be any real number with $|q|$ sufficiently small and  let $A$ be a generator subalgebra of the $q$-Gaussian von Neumann algebra $M$ associated with $\H_{\mathbb{R}}$. Then $A$ is the unique maximal amenable extension of any of its diffuse subalgebras.
\end{thm*}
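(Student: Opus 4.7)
The plan is to follow the strategy Popa introduced for the generator masa in the free group factor \cite{popa83maxinjective}, as refined in \cite{wen15radialmasa, BrothierWen15cup} to yield not just maximal amenability but also the Peterson-type uniqueness for diffuse subalgebras. The central technical statement is an \emph{asymptotic orthogonality property} (AOP) for the inclusion $A \subset M$; combined with the already-known singularity of $A$ \cite{BikramMukherjee16generatormasa, wen2016singularityinq-gaussian} and the standard abstract machinery attached to AOP, this yields both the maximal amenability of $A$ and its uniqueness as a maximal amenable extension of each of its diffuse subalgebras.

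The main technical step is to understand the complement $L^2(M) \ominus L^2(A)$ as an $A$-$A$ bimodule. Starting from the natural $q$-Wick spanning set of $L^2(M)$ built on the fixed generator $\xi$ and on vectors orthogonal to $\xi$, I would group Wick words according to the pattern of $\xi$-blocks separating the \emph{non-$\xi$} letters. In the spirit of R\u{a}dulescu's construction of a Riesz basis for the radial masa \cite{radulescu91radialmasa}, each such grouping should yield an $A$-bimodular family which, while no longer orthonormal once $q \neq 0$, is a Riesz basis for its closed $A$-$A$ linear span with Riesz constants bounded uniformly in the word length. This is the structural theorem announced in the abstract: $L^2(M) \ominus L^2(A)$ decomposes as a countable orthogonal sum of $A$-$A$ subbimodules, each Riesz-equivalent to the coarse bimodule $L^2(A) \otimes L^2(A)$. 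Once this is available, the AOP reduces to controlling off-diagonal $A$-valued matrix coefficients of $q$-Wick words paired against a weakly null bounded sequence in $M$, which the Riesz basis expansion makes tractable.

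The principal obstacle is the Riesz basis construction itself together with uniform control of its Riesz constants. For $q = 0$ the Wick basis is genuinely orthonormal and the bimodule decomposition is immediate; for $q \neq 0$ one is forced to estimate Gram matrices of $q$-Wick words whose entries are combinatorial $q$-sums indexed by permutation statistics, and one must produce both upper and lower eigenvalue bounds uniform in the length of the word and in the number of $\xi$-blocks. It is precisely the breakdown of these uniform bounds at large $|q|$ that restricts the result to small $|q|$: the geometric-series control that replaces exact orthogonality collapses beyond a threshold depending only on the combinatorics, not on $\dim \H_{\mathbb{R}}$, which is why the theorem is dimension-independent. Once this structural estimate is secured, the passage from AOP to maximal amenability and Peterson uniqueness proceeds along the by-now standard lines, using the singularity of $A$ to upgrade the asymptotic orthogonality of bimodules into the rigidity of intermediate amenable extensions.
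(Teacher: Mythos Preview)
Your overall strategy matches the paper's: establish a strong asymptotic orthogonality property for $A\subset M$ via a R\u{a}dulescu-type Riesz basis for $L^2(M)\ominus L^2(A)$, then invoke singularity/mixing of $A$ and the standard machinery from \cite{wen15radialmasa,BrothierWen15cup,houdayer14structurebogoljubov} to conclude the absorbing amenability property. You have also correctly identified that the uniform Riesz bounds are the bottleneck and the source of the $|q|$ restriction, and that the result is dimension-free.

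One point deserves sharpening. Your description of the basis as ``Wick words grouped according to the pattern of $\xi$-blocks separating the non-$\xi$ letters'' suggests indexing by the full tuple of $e$-run lengths inside a basic tensor. The paper explicitly warns (in the introduction) that working with basic tensors in this way does not yield tractable estimates, however small $|q|$ is. The basis the paper actually uses is coarser and more structural: for each $k$ one sets $T^k=\{\xi\in\H^{\otimes k}: a(e)\xi=a_r(e)\xi=0\}$, picks an orthonormal basis $\{\xi^i\}$ of $\bigoplus_k T^k$, and takes $\xi^i_{r,s}=e^{\otimes r}\otimes\xi^i\otimes e^{\otimes s}$. Only the \emph{outer} $e$-runs are tracked; the ``core'' $\xi^i$ is an abstract element of the joint kernel of the left and right annihilation operators, not a word with a prescribed internal $e$-pattern. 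This choice is what makes the Gram matrix $\langle\xi_{r,s},\xi_{r',s'}\rangle$ computable in closed form and controllable by $|q|^{|r-r'|}[r+s]_q!$, which in turn drives both the Riesz estimate and the subsequent ``support escapes $L_N$, $R_N$'' argument. If you attempt the finer block-pattern indexing you will likely run into exactly the norm-control problems the authors allude to; the kernel subspaces $T^k$ are the key idea you should incorporate.
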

This result can be seen as an evidence for the generator masa being the proper
$q$-Gaussian counterpart of the generator masa of the free group factors. It is also
notable that when $\dim \H_{\mathbb{R}}=\infty$ and $|q|\neq 0$ is small, this gives the first examples of maximal
amenable masas of the $q$-Gaussian algebras. As shown in \cite{GuionnetShlyakhtenko14freetransport}, when $\dim \H_{\mathbb{R}}<\infty$ and $|q|\neq 0$ is small, $\G_q(\H_{\mathbb{R}})$ is isomorphic to $L(\mathbb{F}_{\dim \H_{\mathbb{R}}})$ . Even in this case, our main theorem is meaningful, namely, this possibly gives a family of examples supporting Peterson's conjecture which differs from the ones given in \cite{houdayer14gammastability},\cite{wen15radialmasa},\cite{BrothierWen15cup}.

The proof relies on the notion of the \textit{asymptotic orthogonality property} which is first introduced by Popa, \cite{popa83maxinjective} and used by many authors with great success (see for instance  \cite{arnaud14maxinjective},\cite{CFRW2010radialmasa},\cite{boutonnet13hyperbolic},\cite{houdayer14exoticmaxinjective},\cite{houdayer14structurebogoljubov},\cite{wen15radialmasa},\cite{BrothierWen15cup} and the references therein). In this strategy, the central theme is to show this property of the subalgebra. Basically, in order to show this property,
one expands a vector of the standard Hilbert space along an appropriate basis
and carries out some analyses. Historically, in order to do this, Popa \cite{popa83maxinjective} used the canonical basis coming from group elements and Houdayer \cite{houdayer14exoticmaxinjective} used the basis consisting of the basic tensors of the Fock space.

One may think that our situation is similar to the free-group-factor case and
one may be tempted to simply mimic the proof for that case. However, a direct imitation does not resolve the problem: if we use the usual basis
consisting of basic tensors, it is quite difficult to show this property however small
the absolute value of $q$ is. The problem is that as elements of the $q$-Fock space
get into higher tensor parts, it becomes very hard to control the norms. As a result,
the situation is completely different from that of the free-group-factor case and it
is hopeless to carry out any analysis.

To avoid this difficulty, we have to construct a suitable basis. Our key
idea comes from R{\u a}dulescu's work in \cite{radulescu91radialmasa}. There is a reason why
we look at his construction; in \cite{CFRW2010radialmasa}, Cameron-Fang-Ravichandran-White showed
the maximal amenability of a subalgebra which is called the \textit{radial masa} of the
free group factors using Radulescu's basis (Later, Wen simplified the proof in \cite{wen15radialmasa}).
Although our situation is seemingly different from their situation, when the absolute
value of $q$ is small enough, it is possible to construct a basis suitable for our purpose.

By using this basis, we would like to develop a similar strategy to that of Wen
\cite{wen15radialmasa}. Unfortunately, even after the basis is constructed, the subsequent computations
are arduous; some key facts when $q = 0$ are no longer true when $q$ is non-zero. Nevertheless, we expect that what are true in the case of $q = 0$ should be ``approximatly true'' for the general $q$-Fock spaces, at least when $|q|$ is small. In this spirit, we are able to carry out the analyses.

The paper consists of 5 sections and an appendix. Section 1 is about preliminaries for the $q$-Fock space and there we establish some notations which will be used throughout the paper. In Section 2 we develop some $q$-combinatorics for later use. The next two sections will take up the majority of our paper: Section 3 introduces and proves the R{\u a}dulescu type of Rietz basis and Section 4 contains the key estimates for elements in the relative commutant of $A$ in the ultraproduct. In the last section, we establish the strong asymptotic orthogonality property for the inclusion of the generator subalgebra $A\subset M$ and finishes the proof of the main theorem. One can also recover the fullness of those $q$-Gaussian algebras using the basis we constructed.  Finally, in the appendix, we include an observation that the strong asymptotic orthogonality property implies singularity for diffuse abelian subalgebras. Since we will use the singularity of the generator masa in the proof of the strong asymptotic orthogonality property, this result does not make our argument easier.
However, this result will save us the trouble of showing the singularity once we know
the strong asymptotic orthogonality property. Hence we include it in the end of the paper.

\section*{Acknowledgement}
The majority of the work was completed during the authors' stay for the Hausdorff Trimester Program on von Neumann Algebras, 2016. The authors are grateful for the hospitality from the University of Bonn and the Hausdorff Research Institute for Mathematics. They would also thank Jesse Peterson for his encouragement.

\section{Preliminaries}
Throughout the paper we assume that $-1<q<1$ and let $\H_{\mathbb{R}}$ be a real Hilbert space with $\dim\H_{\mathbb{R}}\geq 2$. Denote by $\H:=\H_{\mathbb{R}}\otimes_{\mathbb{R}} \mathbb{C}$ the complexification of $\H_{\mathbb{R}}$. Define an inner product on $\bigoplus_{n\geq 0}\H^{\otimes n}$ by 
\[\left\langle e_1 \otimes \cdots\otimes e_n,f_1\otimes\cdots\otimes f_m\right\rangle_q=\delta_n(m)\sum_{\sigma\in S_m} q^{|\sigma|} \left\langle e_1\otimes\cdots\otimes e_n,f_{\sigma(1)} \otimes\cdots\otimes f_{\sigma(m)}\right\rangle, \]
where $S_m$ is the group of permutations on $\{1,\cdots, m\}$,  $\H^{\otimes 0}=\mathbb{C}\Omega$ is the space spanned by the vacuum vector $\Omega$, the inner product on the right-hand side is the usual one on $\H^{\otimes m}$ and by $|\sigma|$ we mean the number of inversions of $\sigma \in S_m$ given by
\[|\sigma|=\#\{(i,j)\in\{1,\cdots, m\}^2:i<j,\sigma(i)>\sigma(j)\}. \]
 The \textit{$q$-deformed Fock space} $\F_q(\H_{\mathbb{R}})$ is the completion of $(\bigoplus_{n\geq 0}\H^{\otimes n},\left\langle\cdot,\cdot \right\rangle_q)$ and $\|\cdot\|_q$ is the norm induced by this inner product. For simplicity, sometimes we will suppress the subscript $q$ for the inner product and the norm on the $q$-Fock space.

For $e\in \H_{\mathbb{R}}$, we define the \textit{left creation operator} $c(e)$ and the \textit{right creation operator} $c_r(e) $ on $\F_q(\H_{\mathbb{R}})$ by $c(e)(\Omega)=e=c_r(e)(\Omega)$ and
\begin{equation}\label{defn of creation}
\begin{split}
c(e)(e_1\otimes\cdots\otimes e_n)=e\otimes e_1\otimes \cdots\otimes e_n,\\
c_r(e)(e_1\otimes\cdots\otimes e_n)= e_1\otimes \cdots\otimes e_n\otimes e,
\end{split}
\end{equation}
for $n\geq 1$.  Both $c(e)$ and $c_r(e)$ are  bounded operators \cite[Lemma 4]{BozejkoSpeicher91brownian} and their adjoints $a(e)=c^*(e), a_r(e)=c_r^*(e)$ are called the \textit{left annihilation operator} and the \textit{right annihilation operator}, respectively, which are given by $a(e)(\Omega)=0=a_r(e)(\Omega)$ and 
\begin{equation}\label{defn of annihilation}
\begin{split}
a(e)(e_1\otimes\cdots\otimes e_n)=\sum_{1\leq i\leq n}q^{(i-1)}\left\langle e,e_i \right\rangle e_1\otimes \cdots\otimes \hat{e}_i\otimes \cdots\otimes e_n,\\
a_r(e)(e_1\otimes\cdots\otimes e_n)=\sum_{1\leq i\leq n}q^{(n-i)}\left\langle e,e_i \right\rangle e_1\otimes \cdots\otimes \hat{e}_i\otimes \cdots\otimes e_n,
\end{split}
\end{equation}
for $n\geq 1$, where $\hat{e}_i$ means a removed letter. Note that $c(e)$ and $c_r(f)$ commute but $c(e)$ and $a_r(f)$ do not commute in general.

The operators $c(e), c_r(e)$ satisfy the following important \textit{$q$-commutatiton relations} \cite{BozejkoSpeicher91brownian}:
\begin{equation}\label{commutation relation}
\begin{split}
a(e)c(f)-q c(f)a(e)=\left\langle e,f\right\rangle Id,\\
a_r(e)c_r(f)-q c_r(f)a_r(e)=\left\langle e,f\right\rangle Id.
\end{split}
\end{equation}

For $e\in \H_{\mathbb{R}}$, let 
\[s(e)=c(e)+a(e)\]
and let $\G_q(\H_{\mathbb{R}})$ be the von Neumann algebra generated by $\{s(e):e\in \H_{\mathbb{R}}\}$. We call it the \textit{$q$-Gaussian algebra associated with $\H_{\mathbb{R}}$}. It is known that $\G_q(\H_{\mathbb{R}})$ is a type II$_1$ factor \cite[Corollary 1]{ricard2005factoriality} and $\Omega$ is a generating and cyclic vector which gives the trace for $\G_q(\H_{\mathbb{R}})$ \cite[Theorem 4.3, 4.4]{BozejkoSpeicher94CPmaps}. Consequently, each element $x\in \G_q(\H_{\mathbb{R}})$ is uniquely determined by $\xi=x\cdot\Omega\in \F_q(\H_{\mathbb{R}})$ and we write $x=s(\xi)$. This notation is consistent with the above definition for $s(e), e\in \H_{\mathbb{R}}$. 

One can also define $s_r(e)=c_r(e)+a_r(e)$ for $e\in \H_{\mathbb{R}}$ and define $\G_{q,r}(\H_{\mathbb{R}}):=\{s_r(e):e\in \H_{\mathbb{R}}\}''$. Then similar to the group von Neaumann algebra case we have $\G_{q,r}(\H_{\mathbb{R}})=\G_q(\H_{\mathbb{R}})'$.

Here we record two facts that will be used in this paper. 
\begin{itemize}
\item Let $e\in \H$ be a unit vector, then
\begin{equation}\label{l^2 norm estimate}
\|e^{\otimes n}\|_q^2=[n]_q!,
\end{equation}
where $[k]_q=\dfrac{1-q^k}{1-q}$ and $[n]_q!=[1]_q\cdots [n]_q$. We also define $[0]_q!:=1$.
\item (\textbf{Wick formula},\cite[Proposition 2.7]{BozejkoKummererSpeicher97q-gaussian}) Let $e_1\otimes \cdots\otimes e_n\in \H^{\otimes n}$, then 
\begin{equation}\label{Wick formula}
\begin{split}
s(e_1\otimes \cdots\otimes e_n)&=\sum_{i=0}^{n}\sum _{\sigma\in S_n/(S_{n-i}\times S_i)} q^{|\sigma|}c(e_{\sigma(1)})\cdots c(e_{\sigma(n-i)})\\
&\times a(e_{\sigma(n-i+1)})\cdots a(e_{\sigma(n)}),
\end{split}
\end{equation}
where $\sigma$ is the representative of $S_{n-i}\times S_{i}$ in $S_n$ with minimal number of inversions.

\end{itemize}

From now on we fix a unit vector $e\in \H_{\mathbb{R}}$ and we call the von Neumann subalgebra $\G_q(\mathbb{R}e)\subset \G_q(\H_{\mathbb{R}})$ a \textit{generator subalgebra}. It is shown by Ricard in \cite{ricard2005factoriality} that this gives a maximal abelian subalgebra (masa) of $\G_q(\H_{\mathbb{R}})$.

Let $T:\H_{\mathbb{R}}\rightarrow\H_{\mathbb{R}}$ be a $\mathbb{R}$-linear contraction. We still denote by $T$ its complexification given by $T(\xi+i\eta)=T(\xi)+iT(\eta), \forall \xi,\eta\in \H_{\mathbb{R}}$. Then the \textit{first quantization} $\F_q(T)$, is the bounded operator on $\F_q(\H_{\mathbb{R}})$ defined by 
\[\F_q(T)=Id_{\mathbb{C}\Omega}\oplus\bigoplus_{n\geq 1}T^{\otimes n}.\]

The \textit{second quantization} of $T$, is the unique unital completely positive map $\G_q(T)$ on $\G_q(\H_{\mathbb{R}})$ defined as
\[\G_q(T)(s(\xi))=s(\F_q(T)(\xi)).\]
In particular, if $T=E_e:\H_{\mathbb{R}}\rightarrow \mathbb{R}e$ is the orthogonal projection, then $\F_q(E_e)$ is the conditional expectation of $\G_q(\H_{\mathbb{R}})$ onto $\G_q(\mathbb{R}e)$.

To simplify notations, from now on we will write $A:=\G_q(\mathbb{R}e)$ for the generator subalgebra and $M:=\G_q(\H_{\mathbb{R}})$ for the $q$-Gaussian algebra.

\section{Some $q$-combinatorics}

In this section we will develop some formulas about combinatorics in $q$-Gaussians that will be needed in later sections.

For $n,m\in \mathbb{N}\cup \{0\}, n\geq m$, set
\[\binom{n}{m}_q =\dfrac{[n]_q!}{[m]_q!\cdot [n-m]_q!}=\prod_{i=1}^{n-m}\dfrac{1-q^{m+i}}{1-q^i}.\] 
We make the following convention 
\[\binom{n}{m}_q=0 \quad \text{whenever $m>n$ or $m<0$}.\]

The following $q$-analogue of the Pascal's identity for these $q$-binomial coefficients (c.f.  \cite[Proposition 1.8]{BozejkoKummererSpeicher97q-gaussian}) can be easily checked:
\begin{lem} For all $m\in \mathbb{Z}$ and $n\geq 0$,
\begin{equation}\label{q-binomial relation}
\binom{n+1}{m}_q=q^m\binom{n}{m}_q+\binom{n}{m-1}_q=\binom{n}{m}_q+q^{n-m+1}\binom{n}{m-1}_q.
\end{equation}
\end{lem}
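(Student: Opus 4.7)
The plan is direct computation from the defining expression $\binom{n}{m}_q = \tfrac{[n]_q!}{[m]_q!\,[n-m]_q!}$, together with the additive identity $1-q^{n+1} = (1-q^m) + q^m(1-q^{n-m+1})$, which is essentially the whole content of the lemma. First I would dispose of the boundary cases. For $m<0$ or $m>n+1$ every term is zero by the stated convention. For $m=0$ and $m=n+1$ exactly one term on each side is nonzero and the identities can be read off directly. This reduces the problem to the range $1\leq m\leq n$, where all the factorials appearing are genuinely defined.

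For the first equality, I would place $q^m\binom{n}{m}_q + \binom{n}{m-1}_q$ over the common denominator $[m]_q!\,[n-m+1]_q!$ and pull out a factor of $[n]_q!$, which reduces the claim to the numerator identity
\[
q^m\,[n-m+1]_q + [m]_q \;=\; [n+1]_q.
\]
Plugging in $[k]_q = \tfrac{1-q^k}{1-q}$, the left-hand side telescopes:
\[
\frac{q^m - q^{n+1}}{1-q} + \frac{1-q^m}{1-q} \;=\; \frac{1-q^{n+1}}{1-q} \;=\; [n+1]_q.
\]
The second equality follows by exactly the same bookkeeping, with the same common denominator, reducing to
\[
[n-m+1]_q + q^{n-m+1}[m]_q \;=\; [n+1]_q,
\]
which again collapses to $\tfrac{1-q^{n+1}}{1-q}$ after cancellation of the middle terms.

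There is no real obstacle here; the lemma is entirely formal once the boundary cases are handled and the correct common denominator is chosen. The two asymmetric forms are two readings of the same split $1-q^{n+1} = (1-q^m) + q^m(1-q^{n-m+1})$ depending on whether one factors out the prefix or the suffix, and the conventions on out-of-range $m$ are set up precisely so that the identity continues to hold verbatim at the endpoints.
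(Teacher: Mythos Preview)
Your proof is correct and is precisely the direct verification the paper has in mind; the paper itself offers no proof beyond the remark that the identity ``can be easily checked.'' Your handling of the boundary cases and the reduction to the telescoping identity $q^m[n-m+1]_q + [m]_q = [n+1]_q$ is clean and complete.
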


Continuing the analogy, the $q$-binomial coefficients $\qc{n}{m}$ can also be seen to count `number' of weighted paths in the `$q$-Pascal's triangle' from $(0,0)$ to $(n-m,m)$. The $q$-Pascal triangle is formed from the ordinary Pascal triangle by putting a weight of $q^i$ on each (right) edge from $(i,j)$ to $(i,j+1)$ , as shown below. All the other (left) edges will have weight $1$. The weight of a path is the product of the weights on its constituent edges.  

\mkqPascal{4}

For instance, the sum of all weighted paths from $(0,0)$ to $(1,2)$ is $1 + q + q\cdot q = \qn{3} = \qc{3}{2}$. It is clear from the diagram that they satisfy the second recurrence relation mentioned above, with the other one following from the symmetry of the $q$-binomial coefficient. 

\begin{lem}
For $n_1,n_2,m\in \mathbb{N}\cup\{0\}$ with $n_1+n_2\geq m$, we have
\begin{equation}\label{a complex q-binomial relation}
\sum_{i=0}^mq^{(n_1-i)(m-i)}\binom{n_1}{i}_q\binom{n_2}{m-i}_q=\binom{n_1+n_2}{m}_q.
\end{equation}
\end{lem}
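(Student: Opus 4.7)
The plan is to exploit the combinatorial description of $q$-binomial coefficients as weighted lattice paths in the $q$-Pascal triangle that the authors just set up. Recall that $\binom{N}{k}_q$ equals the sum of the weights of all monotone lattice paths (only right and down steps) from $(0,0)$ to $(N-k,k)$, where a down step from $(i,j)$ to $(i,j+1)$ carries weight $q^{i}$ and right steps are weightless.

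First, I observe that every monotone path from $(0,0)$ to $(n_1+n_2-m,m)$ crosses the antidiagonal $\{(x,y):x+y=n_1\}$ at a unique lattice point, necessarily of the form $(n_1-i,i)$ with $\max(0,m-n_2)\le i\le\min(m,n_1)$. I then enumerate paths by conditioning on this crossing point. For a fixed $i$, the initial segment from $(0,0)$ to $(n_1-i,i)$ contributes exactly $\binom{n_1}{i}_q$ to the total weight.

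Second, for the terminal segment from $(n_1-i,i)$ to $(n_1+n_2-m,m)$, I translate so that $(n_1-i,i)$ becomes the origin; it then becomes a monotone path from $(0,0)$ to $(n_2-(m-i),m-i)$, whose weight computed in the fresh $q$-Pascal triangle totals $\binom{n_2}{m-i}_q$. However, a down edge in the original triangle sitting at horizontal coordinate $x'+(n_1-i)$ carries weight $q^{x'+(n_1-i)}$, whereas the corresponding shifted edge carries only $q^{x'}$. Since the terminal segment has exactly $m-i$ down edges, the total correction factor between the original and shifted weights is $q^{(n_1-i)(m-i)}$.

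Combining these two pieces and extending the range of $i$ from $[\max(0,m-n_2),\min(m,n_1)]$ to $[0,m]$ (which is harmless because $\binom{n_1}{i}_q$ or $\binom{n_2}{m-i}_q$ vanishes outside the former range by the convention on page 2) yields the claimed identity. The only real subtlety is the bookkeeping of the $q$-weights under the translation in the second step, which produces the factor $q^{(n_1-i)(m-i)}$; once that is handled, the identity is read off directly from the two-piece decomposition. Alternatively, a straightforward induction on $n_2$ using the $q$-Pascal relation \eqref{q-binomial relation} would also work, but the lattice-path argument is cleaner and reuses the picture drawn above.
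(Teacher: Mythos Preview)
Your proof is correct and follows essentially the same lattice-path argument as the paper: both condition on the unique crossing point $(n_1-i,i)$ of the antidiagonal $x+y=n_1$, identify the initial segment with $\binom{n_1}{i}_q$, and obtain the factor $q^{(n_1-i)(m-i)}$ from the $m-i$ weighted edges in the translated terminal segment. Your write-up is somewhat more explicit about the translation and the range extension, but the idea is identical.
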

\begin{proof}
Any path from $(0,0)$ to $(n_1+n_2-m, m)$ will pass through $(n_1-i,i)$ for some $0 \vee m-n_2 \leq i \leq m \wedge n_1$. This range of index corresponds exactly to the terms with non-zero contribution in the above sum. Now $\qc{n_1}{i}$ counts the sum of weighted paths from $(0,0)$ to $(n_1-i,i)$. To go from $(n_1-i,i)$ to $(n_1+n_2-m,m)$ involves travelling along paths counted by $\qc{n_2}{m-i}$. 

\begin{tikzpicture}[scale=0.9]
\node (ini) at (0,0) {(0,0)};                            
\node (l1) at (-1.6,-1.6) {($n_1-i$,0)};                     
\node (l2) at (-4,-4) {($n_1+n_2-m$,0)};                       
\node (mid) at (-0.6,-2.6) {($n_1-i$,$i$)};                    
\node (r1) at (1,-1) {(0,$i$)};                            
\node (r2) at (3,-3) {(0,$m$)};
\node (fin) at (-1,-7) {($n_1+n_2-m$,$m$)};
\draw [dashed] (ini) -- (l1) -- (mid) -- (r1) -- (ini);
\draw [dashed] (l1) -- (l2) -- (fin) -- (r2) -- (r1);
\draw [dashed] (mid) -- (-3,-5);
\draw [dashed] (mid) -- (1.4,-4.6);
\draw [|-|] (1.6,-4.8) -- (3.2,-3.2) node [midway, right] {$n_1-i$};
\draw [|-|] (-3.5, -5.5) -- (-1.5, -7.5) node [midway, left] {$m-i$};
\draw (-0.6,-2.6) .. controls (-2.2,-4) and (0.5,-5).. (-1,-7);
\end{tikzpicture}

However now every right edge in such a path has its weight multiplied by an extra factor of $q^{n_1-i}$. Moreover there are exactly $m-i$ right edges in such paths. The result now follows. 
\end{proof}
Let $X,Y,Z,W$ be indeterminants satisfying the following relations:
\begin{equation}\label{simple commutation of X,Y,Z,W}
\begin{split}
XY=qYX+1, XZ=ZX+W, WZ=qZW, XW=qWX.
\end{split}
\end{equation}
For convenience, we will make the convention that $x^0=1$, for all $x\in \{X,Y,Z,W\}.$
\begin{rem}
As we will see, in the following we will sometimes use negative powers of the indeterminants, however their coefficients will all be zero.
\end{rem}

We first discuss the commutation relations between powers of $X$ and $Y$.
\begin{lem}
For $n,m\in \mathbb{N}$, we have 
\begin{equation}\label{commutation relation of a(e) and c(e)}
\begin{split}
X^mY^n=\sum_{i=0}^{m}q^{(n-i)(m-i)}[i]_q!\binom{n}{i}_q\binom{m}{i}_qY^{n-i}X^{m-i}.
\end{split}
\end{equation}
\end{lem}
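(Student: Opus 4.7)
The plan is to prove the identity by induction on $m$, bootstrapping from the single-variable case $m=1$ and then repeatedly invoking the $q$-Pascal recurrence \eqref{q-binomial relation} to recombine the two pieces produced by pushing one $X$ across.

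First I would establish the $m=1$ case, namely
$$XY^n \;=\; q^n Y^n X + [n]_q Y^{n-1},$$
by induction on $n$: using $XY = qYX + 1$ we get $XY^n = (qYX+1)Y^{n-1} = qY\,XY^{n-1} + Y^{n-1}$, and the inductive hypothesis plus the telescoping identity $1+q+\cdots+q^{n-1} = [n]_q$ finish the step. This special case matches the claimed formula (only the terms $i=0,1$ survive because $\binom{1}{i}_q=0$ for $i\geq 2$).

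For the inductive step, assume the identity holds for some $m\ge 0$; then
$$X^{m+1}Y^n = X\bigl(X^mY^n\bigr) = \sum_{i=0}^{m} q^{(n-i)(m-i)}[i]_q!\binom{n}{i}_q\binom{m}{i}_q\, (XY^{n-i})\,X^{m-i}.$$
Substituting the $m=1$ identity for $XY^{n-i}$ splits this into two sums, one with leading $Y^{n-i}X^{m-i+1}$ and one with leading $[n-i]_q Y^{n-i-1}X^{m-i}$. After re-indexing the second sum by $i \mapsto i-1$, I would use the easy algebraic simplification $[i-1]_q!\,[n-i+1]_q\binom{n}{i-1}_q = [i]_q!\binom{n}{i}_q$ so that both sums share the common factor $[i]_q!\binom{n}{i}_q Y^{n-i}X^{m-i+1}$. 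The coefficient that remains in brackets is
$$q^{(n-i)(m-i+1)}\binom{m}{i}_q + q^{(n-i+1)(m-i+1)}\binom{m}{i-1}_q,$$
and factoring out $q^{(n-i)(m+1-i)}$ yields exactly $\binom{m}{i}_q + q^{m-i+1}\binom{m}{i-1}_q$, which is $\binom{m+1}{i}_q$ by the second form of \eqref{q-binomial relation}. This produces the desired expression for $m+1$.

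The only real obstacle is bookkeeping: matching exponents of $q$, reindexing so that the two sums line up, and recognizing the factorial/binomial simplification that lets the $q$-Pascal identity be applied cleanly. Everything else is formal manipulation with the single commutation rule $XY = qYX+1$ (the relations involving $Z,W$ from \eqref{simple commutation of X,Y,Z,W} are not needed here).
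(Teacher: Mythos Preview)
Your proposal is correct and follows essentially the same argument as the paper: both establish the base case $XY^n=q^nY^nX+[n]_qY^{n-1}$, push one extra $X$ across using the inductive hypothesis, reindex, apply the identity $[i-1]_q!\,[n-i+1]_q\binom{n}{i-1}_q=[i]_q!\binom{n}{i}_q$, and close with the $q$-Pascal relation $\binom{m}{i}_q+q^{m+1-i}\binom{m}{i-1}_q=\binom{m+1}{i}_q$. The only cosmetic difference is that the paper phrases it as a double induction on $m$ and $n$ (and records the symmetric formula $X^mY=q^mYX^m+[m]_qX^{m-1}$), whereas you induct on $m$ alone, which is sufficient.
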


\begin{proof}
We prove it by induction on $m$ and $n$. 

An easy induction shows that 
\begin{equation*}
\begin{split}
XY^n=q^nY^nX+[n]_qY^{n-1},\\
X^mY=q^mYX^m+[m]_qX^{m-1},
\end{split}
\end{equation*}
which are special cases of (\ref{commutation relation of a(e) and c(e)}).

Now suppose that (\ref{commutation relation of a(e) and c(e)}) holds up to $m,n.$ Then
\begin{equation*}
\begin{split}
X^{m+1}Y^n&=\sum_{i=0}^{m}q^{(n-i)(m-i)}[i]_q!\binom{n}{i}_q\binom{m}{i}_qXY^{n-i}X^{m-i}\\
&=\sum_{i=0}^{m}\left(q^{(n-i)(m-i+1)}[i]_q!\binom{n}{i}_q\binom{m}{i}_qY^{n-i}X^{m+1-i} \right.\\
&\left. +q^{(n-i)(m-i)}[i]_q!\binom{n}{i}_q\binom{m}{i}_q[n-i]_qY^{n-1-i}X^{m-i}\right)\\
&=\sum_{i=0}^{m+1}Y^{n-i}X^{m+1-i}\left(q^{(n-i)(m+1-i)}[i]_q!\binom{n}{i}_q\binom{m}{i}_q \right.\\
& \left. +q^{(n-i+1)(m+1-i)}[i-1]_q!\binom{n}{i-1}_q\binom{m}{i-1}_q[n+1-i]_q\right)\\
&=\sum_{i=0}^{m+1}Y^{n-i}X^{m+1-i}q^{(n-i)(m+1-i)}[i]_q!\binom{n}{i}_q\left(\binom{m}{i}_q+q^{m+1-i}\binom{m}{i-1}_q\right)\\
&=\sum_{i=0}^{m+1}Y^{n-i}X^{m+1-i}q^{(n-i)(m+1-i)}[i]_q!\binom{n}{i}_q\binom{m+1}{i}_q.
\end{split}
\end{equation*}
Here in the second equation we used (\ref{simple commutation of X,Y,Z,W}), the third equality is due to the simple fact 
\[[i-1]_q!\binom{n}{i-1}_q[n-i+1]_q=[i]_q!\binom{n}{i}_q,\]
and the last equality comes from (\ref{q-binomial relation}).

The case for $X^mY^{n+1}$ is completely similar so we omit the details.

\end{proof}

Now we turn to the relation between powers of $X$ and $Z$. Naturally, $W$ also comes into play.

\begin{lem}
For $m,n\in \mathbb{N}$, we have 
\begin{equation}\label{commutation relation of a(e) and c_r(e)}
X^mZ^n=\sum_{i=0}^{m}[i]_q!\binom{n}{i}_q\binom{m}{i}_qZ^{n-i}W^iX^{m-i}.
\end{equation}
\end{lem}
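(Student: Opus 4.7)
The plan is to prove (\ref{commutation relation of a(e) and c_r(e)}) by induction on $m$, with the base case $m=1$ handled by a short preliminary induction on $n$ using the relations $XZ=ZX+W$ and $WZ=qZW$.

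First I would establish the base case by showing
\[
XZ^n = Z^nX + [n]_q Z^{n-1}W
\]
by induction on $n$. Starting from $XZ = ZX+W$, the step $XZ^{n+1} = (XZ^n)Z$ uses only the two relations $XZ=ZX+W$ and $WZ=qZW$, together with the identity $1 + q[n]_q = [n+1]_q$. Matching this against the claimed formula for $m=1$ reduces to the single nonzero term at $i=1$ with coefficient $[n]_q$, which is exactly what the formula predicts.

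For the inductive step, assuming (\ref{commutation relation of a(e) and c_r(e)}) holds for $m$, I would compute $X^{m+1}Z^n = X\cdot X^m Z^n$ by applying $X$ term-by-term. The key subcomputation is to move a single $X$ past $Z^{n-i}W^i$. Using the base case and then the commutation $XW = qWX$ (which iterates to $XW^i = q^i W^iX$), one obtains
\[
XZ^{n-i}W^i = q^i Z^{n-i}W^i X + [n-i]_q Z^{n-i-1}W^{i+1}.
\]
Re-indexing the second contribution by $j=i+1$ groups the output as a single sum in which the coefficient of $Z^{n-i}W^iX^{m+1-i}$ is
\[
q^i[i]_q!\qc{n}{i}\qc{m}{i} + [i-1]_q!\qc{n}{i-1}\qc{m}{i-1}[n-i+1]_q.
\]

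Finally, I would simplify by noting the identity $[i-1]_q!\qc{n}{i-1}[n-i+1]_q = [i]_q!\qc{n}{i}$ (both equal $[n]_q!/[n-i]_q!$), which factors out $[i]_q!\qc{n}{i}$ and leaves $q^i\qc{m}{i} + \qc{m}{i-1}$. Invoking the first form of the $q$-Pascal identity in (\ref{q-binomial relation}) collapses this to $\qc{m+1}{i}$, completing the induction. The only mildly delicate step is this final recombination via Pascal; the rest is bookkeeping entirely analogous to the proof of (\ref{commutation relation of a(e) and c(e)}), and the fact that the $W$'s slide cleanly to the left of the $X$'s (with only a scalar $q^i$) is what makes the $Z^{n-i}W^iX^{m-i}$ normal form stable under multiplication by $X$ on the left.
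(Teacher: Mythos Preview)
Your proof is correct and follows essentially the same approach as the paper: establish the base case $XZ^n = Z^nX + [n]_qZ^{n-1}W$, then induct on $m$ by pushing a single $X$ through $Z^{n-i}W^i$ via $XZ^{n-i}W^i = q^iZ^{n-i}W^iX + [n-i]_qZ^{n-i-1}W^{i+1}$, re-index, and collapse with the $q$-Pascal identity exactly as in the paper's computation. The paper phrases it as a double induction on $m$ and $n$ (stating also $X^mZ = ZX^m + [m]_qWX^{m-1}$ and deferring the $n$-step as ``completely similar''), but your single induction on $m$ with the base case valid for all $n$ is logically equivalent and slightly cleaner.
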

\begin{proof}
Let's proceed by induction on $m$ and $n$.

One can easily show that 
\begin{equation*}
\begin{split}
XZ^n=Z^nX+[n]_qZ^{n-1}W,\\
X^mZ=ZX^m+[m]_qWX^{m-1},
\end{split}
\end{equation*}
which are special cases for (\ref{commutation relation of a(e) and c_r(e)}).

Suppose that (\ref{commutation relation of a(e) and c_r(e)}) holds up to $m,n$. Then
\begin{equation*}
\begin{split}
X^{m+1}Z^n&=\sum_{i=0}^{m}[i]_q!\binom{n}{i}_q\binom{m}{i}_qXZ^{n-i}W^iX^{m-i}\\
&=\sum_{i=0}^{m}[i]_q!\binom{n}{i}_q\binom{m}{i}_q\left(Z^{n-i}X+[n-i]_qZ^{n-i-1}W\right)W^iX^{m-i}\\
&=\sum_{i=0}^{m}[i]_q!\binom{n}{i}_q\binom{m}{i}_q\left(q^iZ^{n-i}W^iX^{m+1-i}+[n-i]_qZ^{n-i-1}W^{i+1}X^{m-i}\right)\\
&=\sum_{i=0}^{m+1}Z^{n-i}W^iX^{m+1-i}\left([i]_q!\binom{n}{i}_q\binom{m}{i}_q q^i+[i-1]_q!\binom{n}{i-1}_q\binom{m}{i-1}_q[n-i+1]_q\right)\\
&=\sum_{i=0}^{m+1}[i]_q!\binom{n}{i}_q\binom{m+1}{i}_qZ^{n-i}W^iX^{m+1-i}.
\end{split}
\end{equation*}
The other case is completely similar.
\end{proof}

\begin{prop}
Let $X=a(e), Y=c(e), Z=c_r(e)$ and $W$ be the bounded operator on $\F_q(\H_{\mathbb{R}})$ defined by
\[W(\xi)=q^{n}\xi, \forall\xi \in \H^{\otimes n}.\]
Then $X,Y,Z,W$ satisfy the relations listed in (\ref{simple commutation of X,Y,Z,W}). Consequently, (\ref{commutation relation of a(e) and c(e)}) and (\ref{commutation relation of a(e) and c_r(e)}) hold true.
\end{prop}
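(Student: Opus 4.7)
The plan is to verify the four algebraic relations in (\ref{simple commutation of X,Y,Z,W}) directly on the dense subspace of simple tensors, and then observe that the combinatorial identities (\ref{commutation relation of a(e) and c(e)}) and (\ref{commutation relation of a(e) and c_r(e)}) come for free, since the inductive proofs of the preceding two lemmas used nothing more than these algebraic relations (together with the $q$-Pascal identity (\ref{q-binomial relation})).

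The first relation $XY = qYX + 1$ is just (\ref{commutation relation}) specialized to the unit vector $e$, using $\langle e,e\rangle = 1$. For $XZ = ZX + W$, I would compute both $a(e)c_r(e)$ and $c_r(e)a(e)$ on a basic tensor $\xi = e_1\otimes\cdots\otimes e_n$ via the explicit formulas (\ref{defn of creation}) and (\ref{defn of annihilation}). Writing $c_r(e)\xi = e_1\otimes\cdots\otimes e_n\otimes e$, applying $a(e)$ produces a sum over positions $1,\dots,n+1$; the first $n$ summands reassemble into $c_r(e)a(e)\xi$, while the $(n{+}1)$-st slot contributes $q^{(n+1)-1}\langle e,e\rangle\,\xi = q^n\xi = W\xi$. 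This is the only real bookkeeping step, and it is arranged precisely so that the weight $q^n$ built into $W$ on $\H^{\otimes n}$ matches the weight the annihilation operator assigns to the appended rightmost slot.

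The remaining two relations $WZ = qZW$ and $XW = qWX$ reduce to degree counting on each homogeneous piece $\H^{\otimes n}$: since $Z$ raises tensor degree by one, $X$ lowers it by one, and $W$ acts as the scalar $q^n$ on $\H^{\otimes n}$, a one-line comparison on each $\H^{\otimes n}$ settles both identities. Finally, the ``consequently'' clause is automatic: the inductive arguments establishing (\ref{commutation relation of a(e) and c(e)}) and (\ref{commutation relation of a(e) and c_r(e)}) only manipulated the formal relations of (\ref{simple commutation of X,Y,Z,W}) together with (\ref{q-binomial relation}), so once the four relations are verified for the concrete operators $a(e), c(e), c_r(e), W$ on $\F_q(\H_{\mathbb{R}})$, both combinatorial identities transfer verbatim to this setting. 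There is no substantive obstacle beyond careful tracking of the $q$-weights in the commutator $[a(e), c_r(e)]$.
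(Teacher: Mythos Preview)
Your proposal is correct and follows exactly the same approach as the paper: the paper's proof is simply the one-liner ``(\ref{simple commutation of X,Y,Z,W}) can be checked by direct computations hence (\ref{commutation relation of a(e) and c(e)}) and (\ref{commutation relation of a(e) and c_r(e)}) follow from the previous two lemmas,'' and you have spelled out precisely those direct computations. Your bookkeeping for $XZ=ZX+W$ via the $(n{+}1)$-st slot contribution and the degree-counting argument for $WZ=qZW$ and $XW=qWX$ are accurate, and the ``consequently'' clause is handled exactly as the paper intends.
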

\begin{proof}
(\ref{simple commutation of X,Y,Z,W}) can be checked by direct computations hence (\ref{commutation relation of a(e) and c(e)}) and (\ref{commutation relation of a(e) and c_r(e)}) follow from the previous two lemmas.
\end{proof}

\section{R{\u a}dulescu basis in $q$-Fock spaces}

In this section we construct the R{\u a}dulescu basis for $\F_q(\H_{\mathbb{R}})$, which will be the fundamental tool  to study the generator subalgebra. The construction is motivated by the original construction of R{\u a}dulescu in \cite{radulescu91radialmasa}.

For each integer $k\geq 0$, we consider the following subspace of $\H^{\otimes k}\subset \F_q(\H_{\mathbb{R}})$:
\begin{equation}\label{defn of T^k}
T^k:=\{\xi \in \H^{\otimes k}:a(e)\xi=a_r(e)\xi=0\}.
\end{equation}
It is clear that $T^0=\mathbb{C}\Omega$ and each $T^k$ is non-zero (for instance, if we choose $f\in \H_{\mathbb{R}}$ with $f\perp e$, then $f^{\otimes k}\in T^k$).

For each $\xi\in \H^{\otimes k}$ and for all $s,t \in \mathbb{N}\cup\{0\}$, define
\begin{equation}
\xi_{s,t}=e^{\otimes s}\otimes \xi\otimes e^{\otimes t}\in \H^{\otimes (k+s+t)}.
\end{equation}
We also make the convention that $\xi_{s,t}=0$ if either $s<0$ or $t<0$.

We start with a few important observations.

\begin{lem}\label{a(e) acting on words}
For $\xi\in T^k$ and $s,t\geq 0$, we have the following
\begin{equation}
\begin{split}
a(e)\xi_{s,t}=[s]_q\xi_{s-1,t}+q^{s+k}[t]_q\xi_{s,t-1},\\
a_r(e)\xi_{s,t}=q^{t+k}[s]_q\xi_{s-1,t}+[t]_q\xi_{s,t-1}.
\end{split}
\end{equation}
\end{lem}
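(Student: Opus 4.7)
The defining property of $T^k$ is tailor-made to combine with the commutation relations from Section 2: since left and right creation operators commute, $\xi_{s,t} = c(e)^s c_r(e)^t \xi = c_r(e)^t c(e)^s \xi$, which turns the lemma into a question of pulling a single annihilation past a word in the creation operators, and then killing the leftover ``$a(e)\xi$'' and ``$a_r(e)\xi$'' by the hypothesis $\xi \in T^k$. The only other ingredient needed is that the bounded operator $W$ from the proposition after (\ref{commutation relation of a(e) and c_r(e)}) acts on $\H^{\otimes k}$ as the scalar $q^k$, so any $W$ that ends up applied to $\xi$ simply contributes a factor of $q^k$.

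\textbf{First identity.} I would write
\begin{equation*}
a(e)\xi_{s,t} = a(e)\,c(e)^s c_r(e)^t \xi
\end{equation*}
and apply the $m=1$ case of (\ref{commutation relation of a(e) and c(e)}), namely $a(e)c(e)^s = q^s c(e)^s a(e) + [s]_q c(e)^{s-1}$, to obtain
\begin{equation*}
a(e)\xi_{s,t} = q^s\, c(e)^s a(e) c_r(e)^t \xi + [s]_q\, \xi_{s-1,t}.
\end{equation*}
For the surviving first term I would invoke the $m=1$ case of (\ref{commutation relation of a(e) and c_r(e)}), namely $a(e)c_r(e)^t = c_r(e)^t a(e) + [t]_q c_r(e)^{t-1} W$. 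The piece $c_r(e)^t a(e)\xi$ vanishes because $\xi \in T^k$, while $W\xi = q^k\xi$, so the first term contributes exactly $q^{s+k}[t]_q\,\xi_{s,t-1}$, giving the claimed formula.

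\textbf{Second identity.} The plan is entirely symmetric: writing $\xi_{s,t} = c_r(e)^t c(e)^s \xi$, I would use $a_r(e)c_r(e)^t = q^t c_r(e)^t a_r(e) + [t]_q c_r(e)^{t-1}$ (which has the same form as (\ref{commutation relation of a(e) and c(e)}) because $a_r$ and $c_r$ satisfy the same $q$-commutation as $a$ and $c$) together with a ``mixed'' relation $a_r(e)c(e) = c(e)a_r(e) + W$. This last is not explicitly listed in (\ref{simple commutation of X,Y,Z,W}) but is simply the mirror image of $XZ = ZX + W$; I would verify it by a one-line check from the formulas (\ref{defn of creation}) and (\ref{defn of annihilation}) and then apply the same $X^m Z^n$ identity (\ref{commutation relation of a(e) and c_r(e)}) with the new interpretation, killing $a_r(e)\xi$ and turning $W$ into $q^k$ as before. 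The only ``difficulty'' here is pure bookkeeping --- tracking which power of $q$ comes from the basic $q$-Leibniz step and which from pushing $W$ past a creation --- and once the mixed relation is in hand the computation is identical to the first part.
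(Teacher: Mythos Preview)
Your argument is correct, but it is not the route the paper takes. The paper works directly from the explicit formula for the annihilation operator: from (\ref{defn of annihilation}) one has, for any basic tensors $W_1,W_2,W_3$,
\[
a(e)(W_1\otimes W_2\otimes W_3)=(a(e)W_1)\otimes W_2\otimes W_3+q^{|W_1|}W_1\otimes(a(e)W_2)\otimes W_3+q^{|W_1|+|W_2|}W_1\otimes W_2\otimes (a(e)W_3),
\]
and then plugs in $W_1=e^{\otimes s}$, $W_2=\xi$, $W_3=e^{\otimes t}$, using $a(e)e^{\otimes m}=[m]_q e^{\otimes(m-1)}$ and $a(e)\xi=0$. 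The second identity is declared analogous.

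Your approach instead leverages the Section~2 machinery: you write $\xi_{s,t}=c(e)^s c_r(e)^t\xi$ and commute $a(e)$ past the creation word using the $m=1$ cases of (\ref{commutation relation of a(e) and c(e)}) and (\ref{commutation relation of a(e) and c_r(e)}), with the residual $W$ becoming $q^k$ on $\xi$. This is equally valid and arguably cleaner in that it reuses identities already proved, whereas the paper's proof is more self-contained and does not depend on Section~2 at all. The one genuine extra step in your version is the mirror relation $a_r(e)c(e)=c(e)a_r(e)+W$ needed for the second identity; as you note, it is a one-line check from (\ref{defn of creation})--(\ref{defn of annihilation}), and the companion relations $Wc(e)=qc(e)W$ and $a_r(e)W=qWa_r(e)$ hold for the same reasons as in (\ref{simple commutation of X,Y,Z,W}), so the appeal to (\ref{commutation relation of a(e) and c_r(e)}) with the new interpretation is legitimate.
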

\begin{proof}
Since the left and right annihilation operators behave similarly, we just show the first equation. This is a consequence of the following equation
\begin{equation*}
\begin{split}
a(e)(W_1\otimes W_2\otimes W_3)&=(a(e)W_1)\otimes W_2\otimes W_3+q^{|W_1|}W_1\otimes(a(e)W_2)\otimes W_3
\\&+q^{|W_1|+|W_2|}W_1\otimes W_2\otimes (a(e)W_3),
\end{split}
\end{equation*}
where $W_i,i=1,2,3$ are basic words and $|W_i|$ stands for the length. By linearity, the equation still holds even if $W_2$ is a linear combination of basic words with the same length. Thus for $\xi \in T^k$, we have 
\begin{align*}
a(e)\xi_{s,t}&=a(e)(e^{\otimes s}\otimes \xi \otimes e^{\otimes t})\\
&=(a(e)e^{\otimes s})\otimes \xi \otimes e^{\otimes t}+q^s e^{\otimes s}\otimes (a(e)\xi) \otimes e^{\otimes t}+q^{s+k}e^{\otimes s}\otimes \xi \otimes (a(e)e^{\otimes t})\\
&=[s]_q\xi_{s-1,t}+0+q^{s+k}[t]_q\xi_{s,t-1}.
\end{align*}
\end{proof}
\begin{lem}\label{inclusion relations}
For $\xi\in T^k$, we have 
\begin{equation}
\begin{split}
s(e)^ns_r(e)^m\xi \in span\{\xi_{s,t}:s,t\geq 0\}, \forall n,m\geq 0,\\
\\
\xi_{s,t}\in span\{s(e)^ns_r(e)^m\xi :n,m\geq 0\}, \forall s,t\geq 0. 
\end{split}
\end{equation}

\end{lem}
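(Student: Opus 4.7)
The plan is to prove both inclusions by a direct induction, relying on the explicit action formulas for $s(e)$ and $s_r(e)$ on the family $\{\xi_{s,t}\}$.

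Combining the tautologies $c(e)\xi_{s,t} = \xi_{s+1,t}$ and $c_r(e)\xi_{s,t} = \xi_{s,t+1}$ with Lemma \ref{a(e) acting on words}, one obtains
\begin{align*}
s(e)\xi_{s,t} &= \xi_{s+1,t} + [s]_q \xi_{s-1,t} + q^{s+k}[t]_q \xi_{s,t-1},\\
s_r(e)\xi_{s,t} &= \xi_{s,t+1} + q^{t+k}[s]_q \xi_{s-1,t} + [t]_q \xi_{s,t-1}.
\end{align*}
Hence the subspace $V := \mathrm{span}\{\xi_{s,t} : s,t \geq 0\}$ is stable under both $s(e)$ and $s_r(e)$. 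Since $\xi = \xi_{0,0} \in V$, an easy induction on $n+m$ yields the first inclusion $s(e)^n s_r(e)^m \xi \in V$.

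For the reverse inclusion, set $W := \mathrm{span}\{s(e)^n s_r(e)^m \xi : n,m \geq 0\}$. Because $s(e)$ and $s_r(e)$ commute (they lie in commuting von Neumann algebras), $W$ is stable under both operators. I proceed by induction on $s+t$, the base case $\xi_{0,0} = \xi$ being immediate. For $s \geq 1$, solving the first identity above (with $s$ replaced by $s-1$) for $\xi_{s,t}$ gives
\[\xi_{s,t} = s(e)\xi_{s-1,t} - [s-1]_q \xi_{s-2,t} - q^{s-1+k}[t]_q \xi_{s-1,t-1},\]
and each of $\xi_{s-1,t}$, $\xi_{s-2,t}$, $\xi_{s-1,t-1}$ lies in $W$ by the induction hypothesis (all have strictly smaller index sum), so by stability of $W$ under $s(e)$ we conclude $\xi_{s,t} \in W$. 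When $s = 0$ and $t \geq 1$, the $[s]_q$ term in the second displayed formula vanishes, so
\[\xi_{0,t} = s_r(e)\xi_{0,t-1} - [t-1]_q \xi_{0,t-2},\]
and the induction hypothesis (together with stability under $s_r(e)$) again applies.

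I do not expect any genuine obstacle: the entire argument is bookkeeping around Lemma \ref{a(e) acting on words}. The only mild care needed is to treat the boundary $s = 0$ by switching to the $s_r(e)$-identity, in order to avoid the ill-defined vector $\xi_{-1,t}$; together with the convention that $\xi_{s,t}=0$ when an index is negative and the fact that $[0]_q = 0$, everything else works smoothly.
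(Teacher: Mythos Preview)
Your proof is correct and essentially matches the paper's argument. For the second inclusion the paper does exactly the same induction on $s+t$ via the displayed identities for $s(e)\xi_{s,t}$ and $s_r(e)\xi_{s,t}$. For the first inclusion there is a minor stylistic difference: the paper expands $(c(e)+a(e))^n$ and $(c_r(e)+a_r(e))^m$ into normal-ordered form via the $q$-commutation relations and then applies Lemma~\ref{a(e) acting on words} to each monomial $c(e)^ia(e)^jc_r(e)^ka_r(e)^l\xi$, whereas you bypass this by observing directly that $V$ is invariant under $s(e)$ and $s_r(e)$; your route is slightly cleaner but equivalent in content.
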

\begin{proof}
For the first inclusion,
\begin{equation}
\begin{split}
s(e)^ns_r(e)^m\xi&= \left(c(e)+a(e)\right)^n\left(c_r(e)+a_r(e)\right)^m\xi\\
&=\left(c_r(e)+a_r(e)\right)^m\left(c(e)+a(e)\right)^n\xi.
\end{split}
\end{equation}
Application of the $q$-commutation relations implies that we can write $\left(c(e)+a(e)\right)^n$  and $\left(c_r(e)+a_r(e)\right)^m$ as polynomials of the form
\begin{equation}
\left(c(e)+a(e)\right)^n=\sum_{i,j\geq 0} a_{i,j}c(e)^ia(e)^j,\\
\left(c_r(e)+a_r(e)\right)^m=\sum_{k,l\geq 0} b_{k,l}c_r(e)^ka_r(e)^l.
\end{equation}
Thus $s(e)^ns_r(e)^m\xi $ is a linear combination of 
\[c(e)^ia(e)^jc_r(e)^ka_r(e)^l\xi,\]
where $i,j,k,l\in \mathbb{N}\cup\{0\}$. By the previous lemma, all such terms are in $span\{\xi_{s,t}:s,t\geq 0\}$, which yields the first inclusion.

We prove the second inclusion by inducting on $s+t$. When $s+t=0$, the conclusion clearly holds. Suppose now that the inclusion holds for $s+t\leq N$. By Lemma \ref{a(e) acting on words} we have that
\begin{equation*}
\begin{split}
s(e)\xi_{s,t}=\xi_{s+1,t}+[s]_q\xi_{s-1,t}+q^{s+k}[t]_q\xi_{s,t-1}, \\
s_r(e)\xi_{s,t}=\xi_{s,t+1}+q^{t+k}[s]_q\xi_{s-1,t}+[t]_q\xi_{s,t-1}.
\end{split}
\end{equation*}
Hence the conclusion holds for $s+t=N+1$ as well.
\end{proof}

For $k\geq 0$, let
\begin{equation*}
\begin{split}
&Q_k: \F_q(\H_{\mathbb{R}})\rightarrow \H ^{\otimes k},\\
\end{split}
\end{equation*}
be the orthogonal projections from the $q$-Fock space onto $\H ^{\otimes k}$ and we define 
\[S^k:=\H^{\otimes k}\ominus T^k. \]
For notational convenience, we let $\H^{\otimes i}=\{0\},$ for all $i<0$.

We first characterize $S^k$ as follows:
\begin{lem}\label{charaterize S^k}
For all $k\geq 0$, $S^k=span\{Q_k\left(s(e)\eta\right),Q_k\left(s_r(e)\eta\right):\eta\in \H^{\otimes l},l< k\}.$
\end{lem}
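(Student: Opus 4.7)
The plan is to reduce the right-hand side to a more concrete span using the grading of the Fock space, and then use the standard adjoint relations between creation and annihilation operators.

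First, I would observe that since $s(e) = c(e) + a(e)$ shifts tensor degree by $\pm 1$, and similarly for $s_r(e)$, for $\eta \in \H^{\otimes l}$ with $l < k$ the projection $Q_k(s(e)\eta)$ vanishes unless $l = k-1$, in which case it equals $c(e)\eta$. The same holds for $s_r(e)$. So the right-hand side collapses to
\[
V := \operatorname{span}\{c(e)\eta,\, c_r(e)\eta : \eta \in \H^{\otimes (k-1)}\},
\]
with the trivial case $k=0$ giving $V = S^0 = \{0\}$ handled separately.

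Next, the containment $V \subseteq S^k$ is immediate from the adjoint relations $c(e)^* = a(e)$ and $c_r(e)^* = a_r(e)$ with respect to $\langle\cdot,\cdot\rangle_q$: for any $\zeta \in T^k$ and $\eta \in \H^{\otimes(k-1)}$,
\[
\langle c(e)\eta, \zeta \rangle_q = \langle \eta, a(e)\zeta \rangle_q = 0, \qquad \langle c_r(e)\eta, \zeta \rangle_q = \langle \eta, a_r(e)\zeta \rangle_q = 0,
\]
so $V \perp T^k$ inside $\H^{\otimes k}$, i.e., $V \subseteq S^k$.

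Finally, for the reverse containment, I would show that no nonzero vector of $S^k$ is orthogonal to $V$. Suppose $\xi \in \H^{\otimes k}$ satisfies $\xi \perp V$. Then for every $\eta \in \H^{\otimes(k-1)}$,
\[
\langle a(e)\xi, \eta \rangle_q = \langle \xi, c(e)\eta \rangle_q = 0, \qquad \langle a_r(e)\xi, \eta \rangle_q = \langle \xi, c_r(e)\eta \rangle_q = 0,
\]
hence $a(e)\xi = a_r(e)\xi = 0$, which means $\xi \in T^k$. Therefore $V^\perp \cap \H^{\otimes k} \subseteq T^k$, which together with $V \subseteq S^k$ forces $V = S^k$.

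No serious obstacle is expected here; the only subtlety is the initial degree-counting that cuts the sum over $l < k$ down to a single contributing level $l = k-1$. Everything else is formal duality between creation and annihilation operators on $\F_q(\H_{\mathbb{R}})$.
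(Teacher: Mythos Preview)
Your proposal is correct and follows essentially the same approach as the paper: both arguments identify $T^k$ with the orthogonal complement of the span via the adjoint relations $c(e)^*=a(e)$ and $c_r(e)^*=a_r(e)$. Your explicit preliminary reduction to $l=k-1$ just makes transparent what the paper's computation $\langle \xi, Q_k(s(e)\eta)\rangle_q = \langle \xi, Q_k(c(e)\eta)\rangle_q = \langle a(e)\xi,\eta\rangle_q$ handles implicitly, since both sides vanish for $l\neq k-1$.
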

\begin{proof}
It suffices to show that $\xi\in \H^{\otimes k}$ belongs to $T^k$ if and only if
\[\left\langle \xi,Q_k(s(e)\eta) \right\rangle_q =\left\langle \xi,Q_k(s_r(e)\eta) \right\rangle_q =0\]
for any $\eta\in \H^{\otimes l},l<k$.

To see this, notice that 
\[\left\langle \xi,Q_k(s(e)\eta) \right\rangle_q=\left\langle \xi,Q_k(c(e)\eta) \right\rangle_q=\left\langle a(e)\xi,\eta \right\rangle_q.\]
Since $a(e)\xi\in \H^{\otimes (k-1)}$, we have that $\left\langle \xi,Q_k(s(e)\eta) \right\rangle_q=0$ for any $\eta \in \H^{\otimes l}, l<k$ if and only if $a(e)\xi=0$.

Similarly, $\left\langle \xi,Q_k(s_r(e)\eta) \right\rangle_q =0$ for any $\eta \in \H^{\otimes l}, l<k$ if and only if $a_r(e)\xi=0$.
\end{proof}

\begin{lem}\label{completeness}
For all $k\geq 0,$ $\H^{\otimes k}\subset span\{s(e)^ns_r(e)^m\xi:\xi\in T^l,l\leq k,n,m\geq 0\}$.
\end{lem}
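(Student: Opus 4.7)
The plan is to proceed by induction on $k$. Let $V_k := \mathrm{span}\{s(e)^n s_r(e)^m \xi : \xi \in T^l,\ l \leq k,\ n, m \geq 0\}$, so that the claim is $\H^{\otimes k} \subset V_k$. The base case $k=0$ is immediate since $\H^{\otimes 0} = \mathbb{C}\Omega = T^0$. For the inductive step, assume $\H^{\otimes (k-1)} \subset V_{k-1} \subset V_k$ and take $\xi \in \H^{\otimes k}$; using the orthogonal decomposition $\H^{\otimes k} = T^k \oplus S^k$, write $\xi = \xi_T + \xi_S$. The piece $\xi_T \in T^k$ is already in $V_k$ (take $l = k$, $n = m = 0$), so only $\xi_S$ requires work.

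Next I would invoke Lemma \ref{charaterize S^k}, which expresses $\xi_S$ as a linear combination of vectors $Q_k(s(e)\eta)$ and $Q_k(s_r(e)\eta)$ with $\eta \in \H^{\otimes l}$, $l < k$. Since $s(e)\eta$ and $s_r(e)\eta$ have components only in $\H^{\otimes(l-1)} \oplus \H^{\otimes(l+1)}$, the projection $Q_k$ vanishes unless $l = k-1$, in which case $Q_k(s(e)\eta) = c(e)\eta$ and $Q_k(s_r(e)\eta) = c_r(e)\eta$. Thus the task reduces to showing $c(e)\eta,\ c_r(e)\eta \in V_k$ for every $\eta \in \H^{\otimes(k-1)}$.

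The final step leverages Lemma \ref{inclusion relations}. Its second inclusion already tells us that every word $\zeta_{s,t}$ with $\zeta \in T^{l'}$, $l' \leq k$, lies in $V_k$. Meanwhile, combining the inductive hypothesis with the first inclusion of Lemma \ref{inclusion relations} (and then projecting onto $\H^{\otimes(k-1)}$ along the tensor-degree grading), every $\eta \in \H^{\otimes(k-1)}$ is a linear combination of such $\zeta_{s,t}$ with $l' \leq k-1$ and $l' + s + t = k-1$. Since $c(e)\zeta_{s,t} = \zeta_{s+1,\,t}$ and $c_r(e)\zeta_{s,t} = \zeta_{s,\,t+1}$ are again of the same shape (with $l' \leq k-1 \leq k$), they still lie in $V_k$, and summing gives $c(e)\eta,\ c_r(e)\eta \in V_k$, completing the induction. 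The only pitfall to avoid is trying to rewrite $c(e) s(e)^n s_r(e)^m \zeta$ directly as a polynomial in $s(e), s_r(e)$, which is impossible since $c(e)$ is not in the algebra they generate; the correct idea is to translate to the $\{\zeta_{s,t}\}$ spanning set, where the action of $c(e)$ and $c_r(e)$ is transparent.
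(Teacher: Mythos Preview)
Your proof is correct and follows essentially the same route as the paper: induction on $k$, reduce to $S^k$ via Lemma~\ref{charaterize S^k}, apply the inductive hypothesis, and pass back and forth between the spanning sets $\{s(e)^n s_r(e)^m\zeta\}$ and $\{\zeta_{s,t}\}$ using both halves of Lemma~\ref{inclusion relations}. The only cosmetic difference is that you explicitly identify $Q_k(s(e)\eta)=c(e)\eta$ for $\eta\in\H^{\otimes(k-1)}$ before invoking the induction, whereas the paper absorbs the extra factor of $s(e)$ into the polynomial and then projects; both paths land on the same $\zeta_{s,t}$ description and finish identically.
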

\begin{proof}
We prove it by induction. When $k=0$, the statement is clearly true. Assume that the lemma holds up to $k-1$ and let $\eta\in \H^{\otimes k}$. We may further assume that $\eta\in S^k$.

By Lemma \ref{charaterize S^k}, $\eta$ is a linear combination of $Q_k(s(e)\xi)$ and $Q_k(s_r(e)\xi), \xi \in \H^{\otimes(k-1)}$. By the induction hypothesis, each $\xi\in \H^{\otimes (k-1)}$ is a linear combination of $s(e)^ns_r(e)^m\xi', \xi'\in T^l, l\leq k-1, n,m\geq 0$. Thus $\eta$ is a linear combination of $Q_k(s(e)^ns_r(e)^m\xi')$, $\xi'\in T^l,l\leq k-1, n,m\geq 0$.

Now, by Lemma \ref{inclusion relations}, $Q_k(s(e)^ns_r(e)^m\xi')\in span\{\xi'_{r,s}: r,s\geq 0,r+s+|\xi'|=k\}$ but again by Lemma \ref{inclusion relations}, each $\xi'_{r,s}\in span\{s(e)^ns_r(e)^m\xi': n,m\geq 0\}$. Therefore, we are done.
\end{proof}

\begin{lem}
Suppose that $\xi\in T^t$ and $r,s,k\geq 0$ are non-negative integers, then we have
\begin{equation}\label{annilators acting on basis}
\begin{split}
a(e)^k\xi_{r,s}=\sum_{i+j=k,i,j\geq 0}\dfrac{[r]_q!}{[r-j]_q!}\cdot \dfrac{[s]_q!}{[s-i]_q!}\cdot\binom{k}{i}_qq^{(t+r-j)i}\xi_{r-j,s-i},\\
a_r(e)^k\xi_{r,s}=\sum_{i+j=k,i,j\geq 0}\dfrac{[r]_q!}{[r-i]_q!}\cdot \dfrac{[s]_q!}{[s-j]_q!}\cdot\binom{k}{i}_qq^{(t+s-j)i}\xi_{r-i,s-j}.
\end{split}
\end{equation}
\end{lem}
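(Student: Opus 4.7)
The plan is to reduce the computation to the commutation relations established in Section 2 by writing $\xi_{r,s}$ as a product acting on $\xi$. Since $c(e)$ and $c_r(e)$ commute, we have
\[
\xi_{r,s} = c(e)^r c_r(e)^s \xi,
\]
so
\[
a(e)^k \xi_{r,s} = a(e)^k c(e)^r c_r(e)^s \xi.
\]
I would first apply the relation (\ref{commutation relation of a(e) and c(e)}) with $X=a(e)$, $Y=c(e)$ to push $a(e)^k$ past $c(e)^r$, producing a sum indexed by some $i$ of the form $c(e)^{r-i} a(e)^{k-i}$ with coefficient $q^{(r-i)(k-i)}[i]_q!\binom{r}{i}_q\binom{k}{i}_q$. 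Then I would apply (\ref{commutation relation of a(e) and c_r(e)}) with $X=a(e)$, $Z=c_r(e)$ to each term $a(e)^{k-i} c_r(e)^s$, producing an inner sum over $l$ with leftover operator $c_r(e)^{s-l} W^l a(e)^{k-i-l}$.

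The key collapse is that $\xi\in T^t$ satisfies $a(e)\xi=0$, so in the inner sum only the summand with $l=k-i$ survives, i.e., the one where all remaining $a(e)$'s have been absorbed. At this point the operator $W^{k-i}$ acts on $\xi$, and since $\xi\in\H^{\otimes t}$ it contributes a scalar $q^{t(k-i)}$. The surviving term is
\[
[k-i]_q!\binom{s}{k-i}_q\, q^{t(k-i)}\, c(e)^{r-i} c_r(e)^{s-(k-i)}\xi=[k-i]_q!\binom{s}{k-i}_q\, q^{t(k-i)}\,\xi_{r-i,\,s-(k-i)}.
\]
Combining the two $q$-powers into $q^{(r-i+t)(k-i)}$ and reindexing with $j=i$ and $i'=k-i$, so that $j+i'=k$ and $\binom{k}{j}_q=\binom{k}{i'}_q$ by symmetry of the $q$-binomial, reshapes the coefficient into exactly
\[
\frac{[r]_q!}{[r-j]_q!}\cdot\frac{[s]_q!}{[s-i']_q!}\cdot\binom{k}{i'}_q q^{(t+r-j)i'},
\]
matching the stated formula. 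The second identity for $a_r(e)^k$ follows by the left–right symmetry: the same argument applies with $c(e)\leftrightarrow c_r(e)$ and $a(e)\leftrightarrow a_r(e)$, noting that the same commutation relations (\ref{commutation relation of a(e) and c(e)}) and (\ref{commutation relation of a(e) and c_r(e)}) hold on the right side, and that $\xi\in T^t$ also satisfies $a_r(e)\xi=0$.

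The calculation is structurally routine; the only place requiring attention is the bookkeeping of $q$-powers, and in particular verifying that the factor $q^{(r-i)(k-i)}$ coming from (\ref{commutation relation of a(e) and c(e)}) combines cleanly with $q^{t(k-i)}$ from $W^{k-i}\xi$ into the asymmetric exponent $(t+r-j)i$ rather than, say, something symmetric in $r$ and $s$. I expect that to be the only nontrivial step. A cleaner alternative is induction on $k$ using Lemma \ref{a(e) acting on words} together with the $q$-Pascal identity (\ref{q-binomial relation}), but this seems to involve a more elaborate manipulation of the $q$-binomial identities than the direct approach above.
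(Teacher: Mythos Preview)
Your proposal is correct. The paper omits the proof entirely, saying only ``This is just an induction via direct computations so we omit the details.'' Your approach---writing $\xi_{r,s}=c(e)^rc_r(e)^s\xi$ and then applying the commutation relations (\ref{commutation relation of a(e) and c(e)}) and (\ref{commutation relation of a(e) and c_r(e)}) in sequence, with the collapse coming from $a(e)\xi=0$---is a genuinely different route, and arguably more transparent than the induction you mention at the end (which is presumably what the paper has in mind). In fact, the paper itself uses exactly your technique in the very next Proposition when computing $\langle\xi_{r,s},\xi_{r',s'}\rangle$, so your method is fully in the spirit of the paper.

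Two minor comments on bookkeeping, both of which you already flag correctly. First, after the collapse to $l=k-i$, the operator $W^{k-i}$ sits to the right of $c_r(e)^{s-(k-i)}$ and acts directly on $\xi\in\H^{\otimes t}$, producing the scalar $q^{t(k-i)}$; there is no need to commute $W$ past $c(e)^{r-i}$ since the scalar pulls out immediately. Second, for the $a_r(e)^k$ identity, the phrase ``the same commutation relations hold on the right side'' is correct but deserves one sentence of justification: one checks $a_r(e)c(e)=c(e)a_r(e)+W$ directly from (\ref{defn of annihilation}), and $Wc(e)=qc(e)W$ is immediate from the definition of $W$, so the roles of $(Y,Z)=(c(e),c_r(e))$ simply swap. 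With that, the left--right symmetry argument goes through verbatim.
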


\begin{proof}
This is just an induction via direct computations so we omit the details.
\end{proof}
Now we compute the inner products between $\xi_{r,s}$.
\begin{lem}\label{basic orthogonality}
Let $\xi,\eta \in \cup_{k\geq 0}T^k$ with $\xi\perp\eta$, then for any $r,s,r',s'$ non-negative integers, we have
\begin{equation}
\left\langle \xi_{r,s},\eta_{r',s'}\right\rangle=0. 
\end{equation}
Moreover, if $r+s\neq r'+s'$, then 
\[\left\langle \xi_{r,s},\xi_{r',s'}\right\rangle=0.\]
\end{lem}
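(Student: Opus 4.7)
For the second assertion the argument is essentially by grading. If $\xi\in T^k$ then $\xi_{r,s}=e^{\otimes r}\otimes\xi\otimes e^{\otimes s}\in\H^{\otimes(k+r+s)}$, and the definition of $\langle\cdot,\cdot\rangle_q$ carries the Kronecker factor $\delta_n(m)$, so vectors lying in tensor components of different length are automatically $q$-orthogonal. When $r+s\neq r'+s'$ the vectors $\xi_{r,s}$ and $\xi_{r',s'}$ sit in different components, and the inner product vanishes.

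For the first assertion the plan is a strong induction on the total degree $N:=r+s+r'+s'$. The base case $N=0$ reads $\langle\xi,\eta\rangle=0$, which is precisely the hypothesis $\xi\perp\eta$. For the inductive step, using the Hermitian symmetry of the inner product (swap the two sides) one may assume without loss of generality that $r+s\geq r'+s'$, in particular $r+s\geq 1$. If $r\geq 1$, write $\xi_{r,s}=c(e)\xi_{r-1,s}$ and transfer $c(e)$ across via the adjoint relation $c(e)^*=a(e)$:
\[\langle\xi_{r,s},\eta_{r',s'}\rangle=\langle\xi_{r-1,s},a(e)\eta_{r',s'}\rangle.\]
Lemma~\ref{a(e) acting on words}, applied to $\eta\in T^{k'}$, expands the right hand side: $a(e)\eta_{r',s'}$ is a linear combination of $\eta_{r'-1,s'}$ and $\eta_{r',s'-1}$, with \emph{no} $\eta_{r',s'}$ term surviving because $a(e)\eta=0$ by membership of $\eta$ in $T^{k'}$. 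Substituting, the original pairing becomes a linear combination of pairings of total degree $N-2$, and these vanish by the inductive hypothesis. If instead $r=0$ and $s\geq 1$, run the same argument with $\xi_{0,s}=c_r(e)\xi_{0,s-1}$ and the adjoint $c_r(e)^*=a_r(e)$, invoking the corresponding right-annihilation formula from Lemma~\ref{a(e) acting on words}.

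The whole argument rests on two structural inputs: the adjoint relations between the creation and annihilation operators on $\F_q(\H_{\mathbb{R}})$, and the defining property of $T^k$ which guarantees that when $a(e)$ or $a_r(e)$ is applied to $\eta_{r',s'}$ no term involving the full $\eta$ survives — every surviving term has strictly smaller total degree. There is no serious obstacle here; once Lemma~\ref{a(e) acting on words} is in hand the result is a clean two-variable induction, and the only care required is in the case split on which end (left via $c(e),a(e)$, or right via $c_r(e),a_r(e)$) to peel off at each stage.
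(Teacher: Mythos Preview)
Your proof is correct. Both the paper and you handle the second (grading) assertion identically. For the first assertion the approaches diverge slightly: the paper invokes Lemma~\ref{inclusion relations} (both directions) together with the self-adjointness and commutativity of $s(e)$ and $s_r(e)$ to reduce the pairing $\langle\xi_{r,s},\eta_{r',s'}\rangle$ to a linear combination of pairings $\langle\xi_{p,q},\eta\rangle$, each of which vanishes by a \emph{single} application of $c(e)^*=a(e)$ (or $c_r(e)^*=a_r(e)$) and $a(e)\eta=a_r(e)\eta=0$. Your route is a direct strong induction on $r+s+r'+s'$ using only Lemma~\ref{a(e) acting on words} and the adjoint relation, peeling off one creation operator at a time. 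Your argument is the more elementary of the two --- it avoids the back-and-forth through Lemma~\ref{inclusion relations} and the algebra of $s(e),s_r(e)$ --- while the paper's reduction is a bit more conceptual, effectively exploiting the $A$-$A$ bimodule structure to collapse the problem to the case $r'=s'=0$ in one stroke.
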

\begin{proof}
The second statement is trivial so we focus on the first. By Lemma \ref{inclusion relations} and the fact that $s(e)s_r(e)=s_r(e)s(e)$, it suffices to show that
\[s(e)^ns_r(e)^m\xi\perp \eta\]
for all $n,m$ non-negative integers. Again by Lemma \ref{inclusion relations}, it reduces to show that 
\[\xi_{r,s}\perp \eta\]
for all $r,s$ non-negative integers. This is clear by the definition of $T^k$ unless $r=s=0$, but then the assumption $\xi\perp\eta$ leads to the conclusion.
\end{proof}
\begin{prop}
Let $r,s,r',s',k$ be non-negative integers with $r+s=r'+s'$ and $\xi\in T^k$ of norm 1, then 
\begin{equation}
\left\langle\xi_{r,s},\xi_{r',s'} \right\rangle=\sum_{i=0}^{r'}q^{(r-i)(r'-i)+k(r-i)+k(r'-i)}\cdot[r']_q!\cdot[s']_q!\cdot\binom{r}{i}_q\cdot\binom{s}{r'-i}_q.
\end{equation}
\end{prop}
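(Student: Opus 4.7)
The plan is to pass from inner products to a coefficient computation via adjoints and then apply equation~\eqref{annilators acting on basis} twice. Since $\xi_{r',s'} = c_r(e)^{s'} c(e)^{r'} \xi$ and $a(e) = c(e)^{\ast}$, $a_r(e) = c_r(e)^{\ast}$, we have
\[ \langle \xi_{r,s}, \xi_{r',s'} \rangle = \langle \xi_{r',s'}, \xi_{r,s} \rangle = \langle a(e)^r a_r(e)^s \xi_{r',s'},\, \xi \rangle. \]
Because the total number of annihilations $r + s$ matches the total number of $e$'s in $\xi_{r',s'}$ (using the hypothesis $r + s = r' + s'$) and because $a(e)\xi = a_r(e)\xi = 0$ by definition of $T^k$, the vector $a(e)^r a_r(e)^s \xi_{r',s'}$ must be a scalar multiple of $\xi$; since $\|\xi\| = 1$, that scalar is exactly the inner product we seek.

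To compute the scalar, first expand $a(e)^r \xi_{r',s'}$ via \eqref{annilators acting on basis} as
\[ a(e)^r \xi_{r',s'} = \sum_{i + j = r} \frac{[r']_q!}{[r'-j]_q!} \cdot \frac{[s']_q!}{[s'-i]_q!} \cdot \binom{r}{i}_q q^{(k + r' - j) i} \, \xi_{r'-j,\,s'-i}. \]
Then apply $a_r(e)^s$ to each summand and extract the coefficient of $\xi_{0,0} = \xi$. In the second application of \eqref{annilators acting on basis}, only the configuration $(\alpha, \beta) = (r'-j,\, s'-i)$ survives, and the required constraint $\alpha + \beta = s$ is automatic from $r + s = r' + s'$. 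The intermediate factorials $[r'-j]_q!$ and $[s'-i]_q!$ then cancel between the two applications, leaving the prefactor $[r']_q!\,[s']_q!$ and a single sum over $i$ (with $j = r - i$).

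The final step is a reindexing $i \mapsto r - i$, which uses the symmetry $\binom{r}{\ell}_q = \binom{r}{r-\ell}_q$ of the $q$-binomial coefficient to turn $\binom{r}{i}_q \binom{s}{r'-r+i}_q$ into $\binom{r}{i}_q \binom{s}{r'-i}_q$ (with the new $i$). A short algebraic check shows that the collected $q$-exponent $(k + r' - j) i + k(r' - j)$, under this substitution, becomes exactly $(r - i)(r' - i) + k(r - i) + k(r' - i)$. The main obstacle is purely bookkeeping: one must carefully track the four indices arising from the two applications of \eqref{annilators acting on basis} and keep the length $k$ of $\xi$ in the correct places inside the $q$-exponents; once this is organized so that the factorial cancellations are visible, the desired identity drops out.
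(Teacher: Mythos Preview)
Your argument is correct, and it takes a genuinely different route from the paper's own proof. The paper moves $c(e)^{r'}$ across the inner product to become $a(e)^{r'}$, then pushes it through $c(e)^{r}c_r(e)^{s}$ using the abstract commutation relations \eqref{commutation relation of a(e) and c(e)} and \eqref{commutation relation of a(e) and c_r(e)} (the $X,Y,Z,W$ lemmas from Section~2); a second adjoint-and-commute step with $a(e)^{r-i}$ and $c_r(e)^{s'}$ then finishes the computation. You instead move both creation operators in $\xi_{r,s}$ across at once, and apply the already-proved formula \eqref{annilators acting on basis} for $a(e)^{r}$ and $a_r(e)^{s}$ acting directly on $\xi_{r',s'}$. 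Your route is somewhat more economical, since \eqref{annilators acting on basis} has already packaged the relevant commutation relations and the condition $\xi\in T^{k}$; the paper's approach is closer to first principles. Two minor remarks: the swap $\langle \xi_{r,s},\xi_{r',s'}\rangle = \langle \xi_{r',s'},\xi_{r,s}\rangle$ is justified a posteriori (both sides are real), but you could avoid it entirely by writing $\langle \xi_{r,s},\xi_{r',s'}\rangle = \langle \xi,\,a_r(e)^{s}a(e)^{r}\xi_{r',s'}\rangle$ directly; and the commutativity of $a(e)$ and $a_r(e)$ (implicit when you apply $a(e)^{r}$ before $a_r(e)^{s}$) follows by taking adjoints of the stated commutativity of $c(e)$ and $c_r(e)$.
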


\begin{proof}
The proof is simply a direct but lengthy computation. However for the convenience of the readers, we include the details here. For simplicity, we write $\alpha_{n,m}^i:=[i]_q!\binom{n}{i}_q\binom{m}{i}_q$ for $n,m,i\geq 0$. Note that by our convention, $\alpha_{n,m}^i=0$ when either $i>n$ or $i>m$.

Now we compute
\begin{align*}
\left\langle \xi_{r,s},\xi_{r',s'}\right\rangle
&=\left\langle c(e)^rc_r(e)^s\xi,c(e)^{r'}c_r(e)^{s'}\xi\right\rangle \\
&=\left\langle a(e)^{r'}c(e)^rc_r(e)^s\xi,c_r(e)^{s'}\xi\right\rangle \\
&\stackrel{(\ref{commutation relation of a(e) and c(e)})}{=}\left\langle \sum_{i=0}^{r'}q^{(r-i)(r'-i)}\alpha_{r,r'}^ic(e)^{r-i}a(e)^{r'-i}c_r(e)^s\xi,c_r(e)^{s'}\xi\right\rangle \\
&\stackrel{(\ref{commutation relation of a(e) and c_r(e)})}{=}\left\langle \sum_{i=0}^{r'}q^{(r-i)(r'-i)}\alpha_{r,r'}^ic(e)^{r-i}\sum_{j=0}^{r'-i}\alpha_{r'-i,s}^jc_r(e)^{s-j}W^ja(e)^{r'-i-j}\xi,c_r(e)^{s'}\xi\right\rangle\\
&\stackrel{(*)}{=}\left\langle \sum_{i=0}^{r'}q^{(r-i)(r'-i)}\alpha_{r,r'}^ic(e)^{r-i}\alpha_{r'-i,s}^{r'-i}c_r(e)^{s-(r'-i)}W^{r'-i}\xi,c_r(e)^{s'}\xi\right\rangle \\
&=\sum_{i=0}^{r'}q^{(r-i)(r'-i)}q^{(r'-i)k}\alpha_{r,r'}^i\alpha_{r'-i,s}^{r'-i}\left\langle c_r(e)^{s-(r'-i)}\xi,a(e)^{r-i}c_r(e)^{s'}\xi\right\rangle\\
&\stackrel{(\ref{commutation relation of a(e) and c_r(e)})}{=}\sum_{i=0}^{r'}q^{(r-i)(r'-i)}q^{(r'-i)k}\alpha_{r,r'}^i\alpha_{r'-i,s}^{r'-i}\left\langle c_r(e)^{s-(r'-i)}\xi,\sum_{j=0}^{r-i}\alpha_{r-i,s'}^jc_r(e)^{s'-j}W^ja(e)^{r-i-j}\xi\right\rangle\\
&\stackrel{(*)}{=}\sum_{i=0}^{r'}q^{(r-i)(r'-i)}q^{(r'-i)k}q^{(r-i)k}\alpha_{r,r'}^i\alpha_{r'-i,s}^{r'-i}\alpha_{r-i,s'}^{r-i}\left\langle c_r(e)^{s-(r'-i)}\xi,c_r(e)^{s'-(r-i)}\xi\right\rangle\\
&=\sum_{i=0}^{r'}q^{(r-i)(r'-i)}q^{(r'-i)k}q^{(r-i)k}\alpha_{r,r'}^i\alpha_{r'-i,s}^{r'-i}\alpha_{r-i,s'}^{r-i}[s-r'+i]_q!\\
&=\sum_{i=0}^{r'}q^{(r-i)(r'-i)+k(r-i)+k(r'-i)}\cdot[r']_q!\cdot[s']_q!\cdot\binom{r}{i}_q\cdot\binom{s}{r'-i}_q,
\end{align*}
where in the equations with $(*)$ we used the fact that $a(e)\xi=0$.

\end{proof}

For later use, we define two constants depending on $q$:
\[C(q):=\prod_{i=1}^{\infty}\dfrac{1}{1-q^i}, \quad D(q):=\prod_{i=1}^{\infty}(1+|q|^i).\]
Basic calculus shows that whenever $-1<q<1$, the above two limits exist unconditionally.

We record a simple but very useful estimate here for later references.
\begin{lem}\label{estimates for binomimal coefficients}
For all $-1<q<1$ and for all $n,m\geq 0$, we have 
\begin{align*}
\left|\binom{n}{m}_q\right|^{\pm 1}\leq D(q)C(|q|).
\end{align*}
\end{lem}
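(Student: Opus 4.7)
The plan is to work directly from the product formula
\[
\binom{n}{m}_q=\prod_{i=1}^{n-m}\frac{1-q^{m+i}}{1-q^i}
\]
given at the start of Section 2 and estimate each factor. The single ingredient I need is the two-sided estimate
\[
1-|q|^{j}\;\leq\;|1-q^{j}|\;\leq\;1+|q|^{j},
\]
valid for every integer $j\geq 1$ and every $q\in(-1,1)$; the upper bound is the triangle inequality, and the lower bound is trivial when $q\geq 0$ or when $j$ is even, while for $q<0$ and $j$ odd one has $|1-q^{j}|=1+|q|^{j}\geq 1-|q|^{j}$. Since $|q|<1$, the two infinite products defining $C(|q|)$ and $D(q)$ both converge unconditionally, which is what makes the final bound finite.

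For the upper bound on $|\binom{n}{m}_q|$, I apply the inequalities factorwise to get
\[
\left|\binom{n}{m}_q\right|\;\leq\;\prod_{i=1}^{n-m}\frac{1+|q|^{m+i}}{1-|q|^{i}}\;=\;\left(\prod_{j=m+1}^{n}(1+|q|^{j})\right)\left(\prod_{i=1}^{n-m}\frac{1}{1-|q|^{i}}\right),
\]
after reindexing the numerator. Both finite products are bounded by their infinite extensions, giving $D(q)C(|q|)$. For the reciprocal bound I reverse the roles of the two inequalities: the same product formula yields
\[
\left|\binom{n}{m}_q\right|\;\geq\;\prod_{i=1}^{n-m}\frac{1-|q|^{m+i}}{1+|q|^{i}}\;\geq\;\frac{1}{C(|q|)D(q)},
\]
by the same reindexing and monotone extension to infinite products. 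Combining the two gives the claim for both signs of the exponent $\pm 1$. There is no real obstacle here; the only thing to be careful about is the sign analysis in the elementary two-sided estimate when $q$ is negative, which is why the conclusion is phrased in terms of $C(|q|)$ rather than $C(q)$.
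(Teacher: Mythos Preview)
Your proof is correct and is exactly the direct factorwise estimate one would expect; the paper itself omits the proof, treating the lemma as a routine consequence of the product formula for $\binom{n}{m}_q$ together with the elementary bounds $1-|q|^j\leq|1-q^j|\leq 1+|q|^j$.
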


\begin{lem}\label{Estimate of inner products}
Let $r,s,r',s',k$ be non-negative integers with $r+s=r'+s', r\geq r'$ and $\xi\in T^k$ of norm 1. Then for each $-1<q<1$, there are constants $E(q), F(q)$ such that 
\begin{equation}\label{formula: estimate of inner product}
E(q)|q|^{k(r-r')}[r+s]_q!\leq|\left\langle\xi_{r,s},\xi_{r',s'} \right\rangle|\leq F(q)|q|^{k(r-r')}[r+s]_q!.
\end{equation}
Moreover, we have that 
\[\lim_{q\to 0}E(q)=\lim_{q\to 0}F(q)=1.\]
\end{lem}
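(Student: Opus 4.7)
The plan is to analyze the sum from the preceding proposition, factor out $q^{k(r-r')}$, and show that what remains is essentially a single dominant term, which I will rewrite as $[r+s]_q!$ times a factor tending to $1$ as $q \to 0$. Reindexing by $j = r' - i$ converts the formula to
\[
\langle \xi_{r,s}, \xi_{r',s'}\rangle = q^{k(r-r')} \sum_{j=0}^{r'} q^{j(r-r'+j+2k)} [r']_q! [s']_q! \binom{r}{r'-j}_q \binom{s}{j}_q,
\]
isolating the dominant $j = 0$ contribution and exhibiting the expected factor $q^{k(r-r')}$.

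For the leading term, I will use the elementary $q$-factorial manipulations $[r']_q! \binom{r}{r'}_q = [r]_q!/[r-r']_q!$, together with $[s']_q!/([r-r']_q! [s]_q!) = \binom{s'}{r-r'}_q$ (valid because $s' = s + r - r'$) and $[r+s]_q! = [r]_q! [s]_q! \binom{r+s}{r}_q$, to rewrite it as
\[
[r+s]_q! \cdot \frac{\binom{s'}{r-r'}_q}{\binom{r+s}{r}_q}.
\]
Lemma \ref{estimates for binomimal coefficients} then bounds this ratio of $q$-binomials uniformly in $r, s, r', s'$ between $(D(q)C(|q|))^{\pm 2}$, and both bounds tend to $1$ as $q \to 0$.

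For the tail $j \geq 1$, the hypothesis $r \geq r'$ with $k \geq 0$ forces $j(r-r'+j+2k) \geq j$, so Lemma \ref{estimates for binomimal coefficients} applied to the ratio $\binom{r}{r'-j}_q \binom{s}{j}_q/\binom{r}{r'}_q$ together with summation of a geometric series bounds the total tail by $\epsilon(q) := \frac{|q|}{1-|q|}(D(q)C(|q|))^3$ times the $j = 0$ term. Since every $q$-factorial and $q$-binomial is positive for $-1 < q < 1$, the leading term is positive, so the triangle inequality and its reverse bracket $|\langle \xi_{r,s}, \xi_{r',s'}\rangle|$ between $(1 \mp \epsilon(q))$ times that term (assuming $|q|$ is small enough that $\epsilon(q) < 1$; for larger $|q|$ one can simply take $E(q) = 0$). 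Setting $F(q) = (1+\epsilon(q))(D(q)C(|q|))^2$ and $E(q) = (1-\epsilon(q))(D(q)C(|q|))^{-2}$ yields the stated inequality, with both constants tending to $1$ as $q \to 0$. The main technical point is that every estimate must be uniform in the parameters $r, s, r', s', k$, which is precisely what Lemma \ref{estimates for binomimal coefficients} supplies.
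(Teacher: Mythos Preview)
Your proof is correct and follows essentially the same idea as the paper: isolate the dominant term $i=r'$ (your $j=0$), rewrite it as $[r+s]_q!$ times a bounded ratio of $q$-binomials, and control the remaining terms by a geometric series using Lemma~\ref{estimates for binomimal coefficients}. The one noteworthy difference is that the paper splits into cases on the sign of $q$: for $q\ge 0$ it invokes the $q$-Vandermonde identity~(\ref{a complex q-binomial relation}) to collapse the full sum exactly to $q^{k(r-r')}[r+s]_q!$, obtaining the sharper constant $F(q)=1$ there, and only resorts to the dominant-term-plus-tail argument for $q<0$. Your uniform treatment avoids the case split at the cost of a slightly looser $F(q)$, which is immaterial since only the limits at $q=0$ matter downstream.
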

\begin{proof}
When $0\leq q<1$, we have 
\begin{align*}
\left\langle\xi_{r,s},\xi_{r',s'} \right\rangle
&=\sum_{i=0}^{r'}q^{(r-i)(r'-i)+k(r-i)+k(r'-i)}\cdot[r']_q!\cdot[s']_q!\cdot\binom{r}{i}_q\cdot\binom{s}{r'-i}_q\\
&\leq q^{k(r-r')}\sum_{i=0}^{r'}q^{(r-i)(r'-i)}\cdot[r']_q!\cdot[s']_q!\cdot\binom{r}{i}_q\cdot\binom{s}{r'-i}_q\\
&\stackrel{(\ref{a complex q-binomial relation})}{=}q^{k(r-r')}\cdot[r']_q!\cdot[s']_q!\cdot\binom{r+s}{r'}_q\\
&=q^{k(r-r')}[r+s]_q!,
\end{align*}
where the last equality comes from the assumption $r+s=r'+s'$. This proves the inequality on the right side.

For the inequality on the left side, simply note that 
\begin{align*}
&\sum_{i=0}^{r'}q^{(r-i)(r'-i)+k(r-i)+k(r'-i)}\cdot[r']_q!\cdot[s']_q!\cdot\binom{r}{i}_q\cdot\binom{s}{r'-i}_q\\
&\geq \sum_{i=r'}^{r'}q^{(r-i)(r'-i)+k(r-i)+k(r'-i)}\cdot[r']_q!\cdot[s']_q!\cdot\binom{r}{i}_q\cdot\binom{s}{r'-i}_q\\
&=q^{k(r-r')}[r']_q![s']_q!\cdot\binom{r}{r'}_q\\
&=q^{k(r-r')}[r']_q![s']_q!\cdot \dfrac{[r]_q!}{[r']_q![r-r']_q!}\cdot \dfrac{[r+s]_q!}{[r+s]_q!}\\
&=q^{k(r-r')}[r+s]_q! \cdot \dfrac{[s']_q![r]_q!}{[r-r']_q![r+s]_q!}\\
&= q^{k(r-r')}[r+s]_q! \cdot \dfrac{(1-q^{r-r'+1})\cdots (1-q^{r})}{(1-q^{s'+1})\cdots (1-q^{r+s})}\\
&\geq q^{k(r-r')}[r+s]_q! \cdot (1-q^{r-r'+1})\cdots (1-q^{r})\\
&\geq \dfrac{1}{C(q)}q^{k(r-r')}[r+s]_q!.
\end{align*}
Thus if we let $E(q)=\dfrac{1}{C(q)}, F(q)=1$, we are done.

Now assume $-1<q<0$. By Lemma \ref{estimates for binomimal coefficients} we have
\begin{align*}
[r']_q![s']_q!\leq [r'+s']_q!D(q)C(|q|).
\end{align*}
Therefore,
\begin{align*}
&\left|\sum_{i=0}^{r'}q^{(r-i)(r'-i)+k(r-i)+k(r'-i)}\cdot[r']_q!\cdot[s']_q!\cdot\binom{r}{i}_q\cdot\binom{s}{r'-i}_q\right|\\
&\leq D(q)^3C(|q|)^3[r'+s']_q!\sum_{i=0}^{r'}|q|^{(r-i)(r'-i)+k(r-i)+k(r'-i)}\\
&\leq D(q)^3C(|q|)^3[r'+s']_q!\cdot |q|^{k(r-r')}\cdot\dfrac{1}{1-|q|}.
\end{align*}
Meanwhile, we have
\begin{align*}
&\left|\sum_{i=r'}^{r'}q^{(r-i)(r'-i)+k(r-i)+k(r'-i)}\cdot[r']_q!\cdot[s']_q!\cdot\binom{r}{i}_q\cdot\binom{s}{r'-i}_q\right|\\
&=|q|^{k(r-r')}[r']_q!\cdot[s']_q!\cdot\binom{r}{r'}_q\\
&=|q|^{k(r-r')}[r+s]_q! \cdot \dfrac{(1-q^{r-r'+1})\cdots (1-q^{r})}{(1-q^{s'+1})\cdots (1-q^{r+s})}\\
&\geq |q|^{k(r-r')}[r+s]_q! \cdot \dfrac{(1-|q|^{r-r'+1})\cdots (1-|q|^{r})}{(1+|q|^{s'+1})\cdots (1+|q|^{r+s})}\\
&\geq \dfrac{|q|^{k(r-r')}}{D(q)C(|q|)}\cdot[r+s]_q!,
\end{align*}
and 
\begin{align*}
&\left|\sum_{i=0}^{r'-1}q^{(r-i)(r'-i)+k(r-i)+k(r'-i)}\cdot[r']_q!\cdot[s']_q!\cdot\binom{r}{i}_q\cdot\binom{s}{r'-i}_q\right|\\
&\leq \sum_{i=0}^{r'-1}|q|^{(r-i)(r'-i)+k(r-i)+k(r'-i)}\cdot[r']_q!\cdot[s']_q!\cdot\binom{r}{i}_q\cdot\binom{s}{r'-i}_q\\
&\leq |q|^{k(r-r')}\sum_{i=0}^{r'-1}|q|^{(r-i)}[r+s]_q!D(q)^3C(|q|)^3\\
&\leq |q|^{k(r-r')}\dfrac{|q|}{1-|q|}D(q)^3C(|q|)^3[r+s]_q!.
\end{align*}
Hence by the triangle inequality,
\begin{align*}
&\left|\sum_{i=0}^{r'}q^{(r-i)(r'-i)+k(r-i)+k(r'-i)}\cdot[r']_q!\cdot[s']_q!\cdot\binom{r}{i}_q\cdot\binom{s}{r'-i}_q\right|\\
&\geq \dfrac{|q|^{k(r-r')}}{D(q)C(|q|)}\cdot[r+s]_q!-\dfrac{|q|^{k(r-r')+1}}{1-|q|}D(q)^3C(|q|)^3[r+s]_q!\\
&=|q|^{k(r-r')}\left(\dfrac{1}{D(q)C(|q|)}-\dfrac{|q|}{1-|q|}D(q)^3C(|q|)^3\right)\cdot [r+s]_q!.
\end{align*}
Finally, if we let $E(q)=\dfrac{1}{D(q)C(|q|)}-\dfrac{|q|}{1-|q|}D(q)^3C(|q|)^3$, $F(q)=\dfrac{D(q)^3C(|q|)^3}{1-|q|}$, the proof is complete.
\end{proof}

The following corollary will be used multiple times later.
\begin{cor}\label{cor: estimates of norms}
Let $r,s,k$ be non-negative integers and let $\xi\in T^k$ be of norm 1.  Then there is a positive number $\alpha>0$, such that whenever $|q|\leq \alpha$, we have
\begin{equation}\label{formula: estimate of norms}
\dfrac{1}{2}[r+s]_q!\leq\|\xi_{r,s}\|^2_q\leq 2[r+s]_q!.
\end{equation}
\end{cor}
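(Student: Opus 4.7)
The proof is essentially an immediate specialization of the preceding Lemma (``Estimate of inner products'') to the diagonal case $r' = r$, $s' = s$. The plan is as follows.

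First, I would observe that $\|\xi_{r,s}\|_q^2 = \langle \xi_{r,s}, \xi_{r,s}\rangle$, and that this inner product fits into the hypotheses of the previous lemma with the choice $r' = r$, $s' = s$ (so in particular $r+s = r'+s'$ and trivially $r \geq r'$). Substituting $r-r' = 0$ into the inequality
\[
E(q)\,|q|^{k(r-r')}[r+s]_q! \leq |\langle \xi_{r,s},\xi_{r',s'}\rangle| \leq F(q)\,|q|^{k(r-r')}[r+s]_q!,
\]
the exponential factor $|q|^{k(r-r')}$ collapses to $1$, yielding
\[
E(q)\,[r+s]_q! \leq \|\xi_{r,s}\|_q^2 \leq F(q)\,[r+s]_q!.
\]
Note that this bound is uniform in $r, s, k$ and in the unit vector $\xi \in T^k$, which is precisely the feature we need.

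Next, I would invoke the second conclusion of the preceding lemma, namely $\lim_{q\to 0} E(q) = \lim_{q\to 0} F(q) = 1$. By continuity, there exists $\alpha > 0$ such that for every $q$ with $|q| \leq \alpha$ one has $E(q) \geq \tfrac{1}{2}$ and $F(q) \leq 2$. For such $q$, the two-sided estimate above immediately gives
\[
\tfrac{1}{2}[r+s]_q! \leq \|\xi_{r,s}\|_q^2 \leq 2[r+s]_q!,
\]
which is the desired inequality.

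There is no real obstacle here: the work was already done in the previous lemma, where the constants $E(q)$ and $F(q)$ were explicitly constructed (as rational expressions in $|q|$, $C(|q|)$, $D(q)$) and shown to tend to $1$ as $q \to 0$. The corollary is simply the statement that the diagonal case yields a \emph{clean}, constant-free bound once $|q|$ is small enough, which is the form of the estimate that will be convenient for the applications in later sections.
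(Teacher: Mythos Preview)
Your proposal is correct and matches the paper's intended argument exactly: the corollary is stated without proof precisely because it is the diagonal case $r'=r$, $s'=s$ of the preceding lemma, with $\alpha$ chosen so that $E(q)\geq 1/2$ and $F(q)\leq 2$.
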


\begin{rem}
Lemma \ref{Estimate of inner products} and Corollary \ref{cor: estimates of norms} hold when $-1/7<q<1/4.$
\end{rem}

To prove the main theorem of this section, we need another lemma.
\begin{lem}\label{lem: positivity of matrix}
Let $\alpha\in \mathbb{R}$ with $|\alpha|<1$. For any $n\in \mathbb{N}$ we define
\[
E_{\alpha}=
  \begin{pmatrix}
    0 & -\alpha & \cdots & -\alpha^{n-1} \\
    -\alpha & \ddots & \ddots & \vdots \\
    \vdots & \ddots & \ddots & -\alpha\\
    -\alpha^{n-1} & \cdots & -\alpha & 0 
  \end{pmatrix},
\]
then the operator norm $\|E_{\alpha}\|\leq \dfrac{2|\alpha|}{1-|\alpha|}$.
\end{lem}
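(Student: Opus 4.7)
The plan is to write $E_\alpha$ as the sum of its strictly lower and strictly upper triangular parts and to realize each of these as a polynomial in the shift. Let $S$ denote the $n\times n$ lower shift, i.e., the matrix with $S_{ij}=1$ when $i=j+1$ and $S_{ij}=0$ otherwise; it is a partial isometry, so $\|S\|=1$, and $S^k$ has its ones exactly on the $k$-th lower subdiagonal (and vanishes for $k\geq n$). Hence
\[
L_\alpha := \sum_{k=1}^{n-1}\alpha^k S^k
\]
is the strictly lower triangular matrix with $(L_\alpha)_{ij}=\alpha^{\,i-j}$ whenever $i>j$, and zero elsewhere. Since $E_\alpha$ has zero diagonal and is symmetric, the identity $E_\alpha = -(L_\alpha + L_\alpha^T)$ holds.

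From here the estimate is essentially immediate. By the triangle inequality and submultiplicativity together with $\|S\|=1$,
\[
\|L_\alpha\| \leq \sum_{k=1}^{n-1}|\alpha|^k\|S\|^k \leq \sum_{k=1}^{\infty}|\alpha|^k = \frac{|\alpha|}{1-|\alpha|},
\]
and since $\|L_\alpha^T\|=\|L_\alpha\|$, one concludes $\|E_\alpha\|\leq \|L_\alpha\|+\|L_\alpha^T\|\leq \dfrac{2|\alpha|}{1-|\alpha|}$.

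There is no real obstacle in this argument; the only step requiring inspection is the matrix-entry description of $L_\alpha$, which follows directly from $S^k$ having its ones exactly on the $k$-th lower subdiagonal. An equivalent route, should the shift decomposition feel heavy-handed, is to apply Gershgorin's disk theorem directly to the symmetric matrix $E_\alpha$: its diagonal vanishes, so every eigenvalue $\lambda$ satisfies $|\lambda|\leq \max_i\sum_{j\neq i}|\alpha|^{|i-j|}\leq 2\sum_{k\geq 1}|\alpha|^k$, yielding the same bound. Either way, the estimate is insensitive to $n$, which is precisely what is needed so that the lemma can be applied uniformly later.
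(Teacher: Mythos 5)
Your proof is correct and takes essentially the same approach as the paper: both decompose $E_\alpha$ into shifted diagonal bands, each of norm $|\alpha|^k$, and sum the resulting geometric series. The paper works with the $2(n-1)$ individual super- and sub-diagonal bands directly while you first group them into $L_\alpha + L_\alpha^T$, but the estimate is identical.
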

\begin{proof}
Clearly we have 
\[
  \left\|\begin{pmatrix}
   0 & -\alpha^{k} && \mbox{\Huge 0}\\
   & \ddots & \ddots &  \\
      && \ddots& -\alpha^{k}\\
     \mbox{\Huge 0} & &&0

  \end{pmatrix}\right\|=|\alpha|^k.
\]
Hence $\|E_{\alpha}\|\leq 2\sum_{i=1}^{n-1}|\alpha|^i\leq \dfrac{2|\alpha|}{1-|\alpha|}.$
\end{proof}

Take an orthonormal basis $\{\xi^i_j:j\in I_k\}$ for $T_k, k\geq 1$. We may re-order the set $\cup_{i\in \mathbb{N}}\{\xi^i_j:j\in I_k\}$ as $\{\xi^i:i\in I\}$ for some index set $I$ and we set that $\xi^0=\Omega.$

Finally we are ready to state and prove the main result of this section.
\begin{thm}\label{thm: Radulescu basis}
For $-1<q<1$ with $|q|$ sufficiently small, the set $\left\{\dfrac{\xi^i_{r,s}}{\|\xi^i_{r,s}\|}:i\in I, r,s \geq 0\right\}$ forms a Rietz basis for $L^2(M)\ominus L^2(A)$, i.e., $span\left\{\dfrac{\xi^i_{r,s}}{\|\xi^i_{r,s}\|}:i\in I, r,s \geq 0\right\}$ is dense in $L^2(M)\ominus L^2(A)$ and there exists some constants $A_q, B_q>0$, such that for all $\lambda^i_{r,s}\in \mathbb{C}$, one has
\begin{equation}\label{formula: rietz basis}
A_q\sum_{r,s,i}|\lambda^i_{r,s}|^2\|\xi^i_{r,s}\|^2\leq \|\sum_{r,s,i}\lambda^i_{r,s}\xi^i_{r,s}\|^2\leq B_q\sum_{r,s,i}|\lambda^i_{r,s}|^2\|\xi^i_{r,s}\|^2.
\end{equation}
\end{thm}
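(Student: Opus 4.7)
The plan is to prove the Riesz basis property by decomposing $L^2(M)\ominus L^2(A)$ into mutually orthogonal finite-dimensional blocks and then showing that the normalized Gram matrix of each block is a uniformly small perturbation of the identity, once $|q|$ is small enough.

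I would first dispatch the spanning statement. Lemma \ref{completeness} combined with Lemma \ref{inclusion relations} shows that every $\H^{\otimes k}$ lies in $\mathrm{span}\{\xi^i_{r,s} : i\in I\cup\{0\},\, r,s\geq 0\}$, so this set has dense linear span in $L^2(M)$. The vectors with $i=0$ are exactly the $e^{\otimes(r+s)}$, whose closed span is $L^2(A)$, while each $\xi^i$ with $i\in I$ lies in $T^{k_i}$ with $k_i\geq 1$ and hence is orthogonal to $\Omega$ in $\cup_k T^k$; Lemma \ref{basic orthogonality} then gives $\xi^i_{r,s}\perp e^{\otimes(r'+s')}$ for every $r,s,r',s'$. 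Thus $\mathrm{span}\{\xi^i_{r,s}:i\in I,\,r,s\geq 0\}\subseteq L^2(M)\ominus L^2(A)$, and comparing closures yields density in $L^2(M)\ominus L^2(A)$.

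Next I would extract the orthogonal decomposition underlying the Riesz inequality. Writing $k_i$ for the level of $\xi^i$, vectors $\xi^i_{r,s}$ and $\xi^j_{r',s'}$ with $k_i+r+s\neq k_j+r'+s'$ live in different tensor degrees and are therefore orthogonal; within a fixed degree $n$, distinct indices $i\neq j$ give orthogonal $\xi^i,\xi^j\in\cup_k T^k$ (either because $k_i\neq k_j$, or because they were chosen orthonormal inside $T^{k_i}=T^{k_j}$), so Lemma \ref{basic orthogonality} applies again. This produces the orthogonal decomposition
\[L^2(M)\ominus L^2(A)=\bigoplus_{i\in I}\bigoplus_{n\geq k_i}V_{i,n},\qquad V_{i,n}:=\mathrm{span}\{\xi^i_{r,s}:r+s=n-k_i\},\]
reducing the Riesz inequality to uniform spectral bounds on the normalized Gram matrix of each block. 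Concretely, setting $\eta^i_{r,s}:=\xi^i_{r,s}/\|\xi^i_{r,s}\|$ and $G^{i,n}_{r,r'}:=\langle\eta^i_{r,n-k_i-r},\eta^i_{r',n-k_i-r'}\rangle$, the diagonal entries are $1$ and by Lemma \ref{Estimate of inner products} together with Corollary \ref{cor: estimates of norms} one has, for $r\neq r'$,
\[|G^{i,n}_{r,r'}|\leq \frac{F(q)\,|q|^{k_i|r-r'|}\,[n-k_i]_q!}{\tfrac12[n-k_i]_q!}=2F(q)\,|q|^{k_i|r-r'|}.\]
Since $k_i\geq 1$ for every $i\in I$, applying Lemma \ref{lem: positivity of matrix} (with $\alpha=|q|$) to the Hermitian off-diagonal matrix $G^{i,n}-I$ gives $\|G^{i,n}-I\|_{\mathrm{op}}\leq \tfrac{4F(q)|q|}{1-|q|}$, uniformly in $i\in I$ and $n\geq k_i$. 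For $|q|$ small enough this is strictly less than $1$, so every $G^{i,n}$ has spectrum in a fixed interval $[A_q,B_q]\subset(0,\infty)$. Writing $\mu^i_{r,s}:=\lambda^i_{r,s}\|\xi^i_{r,s}\|$ and applying this block-wise spectral bound through the orthogonal decomposition delivers exactly the inequality (\ref{formula: rietz basis}).

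The main obstacle I expect is keeping both the geometric decay $|q|^{k_i|r-r'|}$ and the multiplicative constant $F(q)$ under uniform control across all blocks. It is precisely here that excluding $i=0$ matters: the vacuum sector $k_0=0$ would wipe out the decay, consistent with the fact that this sector is exactly $L^2(A)$. The smallness hypothesis on $|q|$ is used twice, first through Corollary \ref{cor: estimates of norms} to make $\|\xi^i_{r,s}\|^2$ comparable to $[n-k_i]_q!$, and then to guarantee $4F(q)|q|/(1-|q|)<1$ so that the block Gram matrices are uniformly invertible.
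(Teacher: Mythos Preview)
Your proposal is correct and follows essentially the same route as the paper: reduce via Lemma~\ref{basic orthogonality} to the finite-dimensional blocks indexed by $(i,n)$, use Lemma~\ref{Estimate of inner products} and Corollary~\ref{cor: estimates of norms} to bound the off-diagonal of the (normalized) Gram matrix by a geometric factor $|q|^{|r-r'|}$, then invoke Lemma~\ref{lem: positivity of matrix} to conclude uniform invertibility; density comes from Lemma~\ref{completeness}. The paper carries out the identical argument with the unnormalized vectors, arriving at the matrices $1\pm 4E_{|q|}$ instead of your $G^{i,n}$, but the content is the same.

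One small remark: Lemma~\ref{lem: positivity of matrix} is stated only for the specific matrix $E_\alpha$, whereas you apply it to the Hermitian matrix $G^{i,n}-I$ whose entries are merely \emph{dominated} by $2F(q)|q|^{|r-r'|}$. Entrywise domination does not in general imply an operator-norm bound, so strictly speaking you should note that the \emph{proof} of Lemma~\ref{lem: positivity of matrix} (decomposing into off-diagonals and summing their norms) applies verbatim to any matrix with this decay, which is exactly how the paper uses it as well.
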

\begin{proof}
 By Lemma \ref{basic orthogonality}, it suffices to find such $A_q, B_q>0$ which are independent of $i\in I$ and $k\geq 0$ such that 
\[A_q\sum_{r+s=k}|\lambda^i_{r,s}|^2\|\xi^i_{r,s}\|^2\leq \|\sum_{r+s=k}\lambda^i_{r,s}\xi^i_{r,s}\|^2\leq B_q\sum_{r+s=k}|\lambda^i_{r,s}|^2\|\xi^i_{r,s}\|^2\]
holds for any $\lambda^{i}_{r,s}\in \mathbb{C}$. Fixing $\xi=\xi^i\in T^t$ for some $t\in \mathbb{N}$, for simplicity, we will omit the superscript $i$ in the rest of the proof. We fix an $\epsilon>0$ small enough such that $|q|\leq \epsilon $  implies that $1/2\leq E(q),F(q)\leq 2$ (in particular, Corollary \ref{cor: estimates of norms} holds). Then for all $q$ with $|q|\leq \epsilon$,
\begin{equation*}
\begin{split}
\|\sum_{r+s=k}\lambda_{r,s}\xi_{r,s}\|^2
&\geq \sum_{r+s=k}|\lambda_{r,s}|^2\|\xi_{r,s}\|^2-\sum_{r+s=r'+s'=k, r\neq r'}|\lambda_{r,s}\lambda_{r',s'}|\cdot|\left\langle \xi_{r,s},\xi_{r',s'}\right\rangle|\\
&\stackrel{(\ref{formula: estimate of inner product})}{\geq}\sum_{r+s=k}|\lambda_{r,s}|^2\|\xi_{r,s}\|^2-2\sum_{r+s=r'+s'=k, r\neq r'}|\lambda_{r,s}\lambda_{r',s'}|\cdot |q|^{t|r-r'|}[k]_q!\\
&\geq \sum_{r+s=k}|\lambda_{r,s}|^2\|\xi_{r,s}\|^2-2\sum_{r+s=r'+s'=k, r\neq r'}|\lambda_{r,s}\lambda_{r',s'}|\cdot |q|^{|r-r'|}[k]_q!\\
&\stackrel{(\ref{formula: estimate of norms})}{\geq} \sum_{r+s=k}|\lambda_{r,s}|^2\|\xi_{r,s}\|^2-4\sum_{r+s=r'+s'=k, r\neq r'}|\lambda_{r,s}\lambda_{r',s'}|\cdot |q|^{|r-r'|}\|\xi_{r,s}\|\|\xi_{r',s'}\|\\
&=\left\langle (1+4E_{|q|})\begin{pmatrix}
|\lambda_{k,0}|\|\xi_{k,0}\|\\
\vdots\\
|\lambda_{0,k}|\|\xi_{0,k}\|
\end{pmatrix},\begin{pmatrix}
|\lambda_{k,0}|\|\xi_{k,0}\|\\
\vdots\\
|\lambda_{0,k}|\|\xi_{0,k}\|
\end{pmatrix}\right\rangle, 
\end{split}
\end{equation*}
where $E_{|q|}$ is the matrix defined in the previous lemma. As $q$ approaches 0, $4\|E_{|q|}\|\rightarrow 0$ by the previous lemma, $1+4E_{|q|}$ will become strictly positive once $q$ is close enough to $0$. Also, notice that the strict positivity of $1+4E_{|q|}$ depends on neither $i$ nor $k$. This shows the existence of $A_q>0$ satisfying the first half of (\ref{formula: rietz basis}).

Similarly,
\begin{align*}
\|\sum_{r+s=k}\lambda_{r,s}\xi_{r,s}\|^2
&\leq \sum_{r+s=k}|\lambda_{r,s}|^2\|\xi_{r,s}\|^2+\sum_{r+s=r'+s'=k, r\neq r'}|\lambda_{r,s}\lambda_{r',s'}|\cdot|\left\langle \xi_{r,s},\xi_{r',s'}\right\rangle|\\
&\stackrel{(\ref{formula: estimate of inner product})}{\leq}\sum_{r+s=k}|\lambda_{r,s}|^2\|\xi_{r,s}\|^2+2\sum_{r+s=r'+s'=k, r\neq r'}|\lambda_{r,s}\lambda_{r',s'}|\cdot |q|^{t|r-r'|}[k]_q!\\
&\leq\sum_{r+s=k}|\lambda_{r,s}|^2\|\xi_{r,s}\|^2+2\sum_{r+s=r'+s'=k, r\neq r'}|\lambda_{r,s}\lambda_{r',s'}|\cdot |q|^{|r-r'|}[k]_q!\\
&\stackrel{(\ref{formula: estimate of norms})}{\leq} \sum_{r+s=k}|\lambda_{r,s}|^2\|\xi_{r,s}\|^2+4\sum_{r+s=r'+s'=k, r\neq r'}|\lambda_{r,s}\lambda_{r',s'}|\cdot |q|^{|r-r'|}\|\xi_{r,s}\|\|\xi_{r',s'}\|\\
&=\left\langle (1-4E_{|q|})\begin{pmatrix}
|\lambda_{k,0}|\|\xi_{k,0}\|\\
\vdots\\
|\lambda_{0,k}|\|\xi_{0,k}\|
\end{pmatrix},\begin{pmatrix}
|\lambda_{k,0}|\|\xi_{k,0}\|\\
\vdots\\
|\lambda_{0,k}|\|\xi_{0,k}\|
\end{pmatrix}\right\rangle, 
\end{align*}
and the existence of $B_q$ is obvious.

The completeness is already shown in Lemma \ref{completeness}, therefore we are done.

\end{proof}
\begin{rem}
By Lemma \ref{lem: positivity of matrix}, $1\pm 4E_q$ is strictly positive when $|q|<1/9$.
\end{rem}
\begin{rem}
Recall that $\xi^0=\Omega$. If we consider $\left\{\dfrac{\xi^0_{r,0}}{\|\xi^0_{r,0}\|}:r\geq 0\right\}\cup\left\{\dfrac{\xi^i_{r,s}}{\|\xi^i_{r,s}\|}:i\in I, r,s \geq 0\right\}$, then this is a Rietz basis of the entire $q$-Fock space $L^2(M)$.
\end{rem}

\section{Locating the supports of elements in the relative commutant}

Throughout this section we will assume that $-1<q<1$ is a real number with $|q|$ small enough such that the conclusions in Corollary \ref{cor: estimates of norms} and Theorem \ref{thm: Radulescu basis} hold.

For $N\geq 0$, define the idempotent $L_N, R_N: L^2(M)\rightarrow L^2(M)$ by $L_N|_{\F_q(\mathbb{R}e)}=R_N|_{\F_q(\mathbb{R}e)}=0$ and
\begin{equation}
\begin{split} 
L_N(\sum_{i\in I,r,s\geq 0} c_{r,s}^i\xi^i_{r,s})=\sum_{i\in I, s\geq 0, 0\leq r\leq N}c_{r,s}^i\xi^i_{r,s},\\
R_N(\sum_{i\in I,r,s\geq 0} c_{r,s}^i\xi^i_{r,s})=\sum_{i\in I, r\geq 0, 0\leq s\leq N}c_{r,s}^i\xi^i_{r,s}.
\end{split}
\end{equation}
By Theorem \ref{thm: Radulescu basis}, $L_N$ and $R_N$ are both well-defined. Moreover, with a little abuse of notation, sometimes we will also use $L_N (\text{resp. }R_N)$ to denote the image of $L_N (\text{resp. }R_N)$.

Let $C\subset A$ be a diffuse subalgebra and fix a free ultrafilter $\omega\in \beta(\mathbb{N})\backslash\mathbb{N}$. Let $z=(z_n)_n\in M^{\omega}\ominus A^{\omega}\cap C'.$ Without loss of generality we assume that $\|z_n\|\leq 1$ (the operator norm is bounded above by 1) and $x_n\in M\ominus A, \forall n$. Just as in \cite{wen15radialmasa}, we would like to show that the support of $z$ eventually escapes both $L_N$ and $R_N$. To this end, we need some preparations.

The first key step towards our goal is to show that $L_N$ is asymptotically right-$A$ modular. 

Recall that $Q_k$ is the orthogonal projection from $\F_q(\H_{\mathbb{R}})$ onto $\H^{\otimes k}$.

\begin{lem}\label{z escapes Q_k}
For any $k\in \mathbb{N}$, we have 
\[\lim_{n\to \omega}Q_k(z_n)=0.\]
\end{lem}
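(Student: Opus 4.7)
The argument will split according to the value of $k$. For $k = 0$, we have $Q_0(z_n\Omega) = \langle z_n\Omega, \Omega\rangle_q \,\Omega = \tau(z_n)\,\Omega = 0$, since $z_n \in M \ominus A$ forces $\tau(z_n) = \tau(E_A(z_n)) = 0$. The remainder of the plan treats the main case $k \geq 1$.

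For $k \geq 1$, I will argue by contradiction, supposing $\delta := \lim_{n\to\omega}\|Q_k(z_n\Omega)\|_q > 0$, and exploit the commutation $z \in C'$ together with the diffuseness of $C$. The first step is to pick unitaries $u_m \in \mathcal{U}(C)$ with $u_m \to 0$ in the weak operator topology; this is equivalent to $\alpha_j^{(m)} := \langle u_m\Omega, e^{\otimes j}\rangle_q / \|e^{\otimes j}\|_q^2 \to 0$ for each fixed $j \geq 0$, i.e.\ the Fourier coefficients of $u_m$ in $\F_q(\mathbb{R}e)$ vanish pointwise. Asymptotic commutation gives $\lim_{n\to\omega}\|u_m z_n u_m^* - z_n\|_2 = 0$ for each fixed $m$; a standard ultrafilter diagonal argument will then produce $m(n) \to \infty$ along $\omega$ with
\[
\lim_{n\to\omega}\|u_{m(n)} z_n u_{m(n)}^* - z_n\|_2 = 0,
\]
so that $\lim_{n\to\omega}\|Q_k(u_{m(n)} z_n u_{m(n)}^*\Omega) - Q_k(z_n\Omega)\|_q = 0$. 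The contradiction will then follow from showing $\lim_{n\to\omega}\|Q_k(u_{m(n)} z_n u_{m(n)}^*\Omega)\|_q = 0$.

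To carry out the main step, I would expand $z_n\Omega$ in the Riesz basis of Theorem \ref{thm: Radulescu basis}, expand $u_m = \sum_j \alpha_j^{(m)} s(e^{\otimes j})$ via the Wick formula \eqref{Wick formula}, and compute $Q_k(l(u_m) r(u_m^*) z_n\Omega)$ using the commutation relations \eqref{commutation relation of a(e) and c(e)} and \eqref{commutation relation of a(e) and c_r(e)} together with the action of creations and annihilations of $e$ on the basis vectors $\xi^i_{r,s}$ from Lemma \ref{a(e) acting on words}. The resulting expression would organize into a double sum over pairs $(j, j')$ representing net-zero tensor-length shifts at each $\xi^i_{r,s}$; the Riesz-basis norm estimates of Corollary \ref{cor: estimates of norms}, the inner-product bounds of Lemma \ref{Estimate of inner products}, and the $q$-Pascal identities of Section 2 would then combine to give the desired vanishing estimate.

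The main obstacle will be to control the ``high-degree'' contributions: pairs $(j, j')$ with both $j$ and $j'$ large yield $\H^{\otimes k}$-components whose coefficients $\alpha_j^{(m)} \overline{\alpha_{j'}^{(m)}}$ do not individually tend to $0$, because the $L^2$-mass of $u_m$ migrates to high degrees as $u_m \to 0$ weakly. Showing that the combinatorial sum over such pairs is nevertheless asymptotically negligible will require using the operator-norm boundedness of $z_n$ (not merely its $L^2$-boundedness) together with a delicate use of the $q$-deformed combinatorics; this is substantially more involved than in the free case $q = 0$, where the cleaner anticommutation relations simplify the bookkeeping.
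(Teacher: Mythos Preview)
Your plan is far more elaborate than what the paper actually does, and the obstacle you flag at the end is a genuine one that you have not resolved. The paper's argument is four lines: by weak-$*$ compactness of the unit ball of $M$, the bounded sequence $(z_n)$ has a weak limit $z\in M$ along $\omega$; if some $Q_k(z_n)$ fails to vanish along $\omega$, then $z\neq 0$; but $z$ clearly lies in $C'\cap(M\ominus A)$, and since the generator masa $A$ is known to be \emph{mixing} in $M$ (by \cite{BikramMukherjee16generatormasa},\cite{wen2016singularityinq-gaussian}), the general fact that mixing masas satisfy $C'\cap M=A$ for every diffuse $C\subset A$ (Proposition~5.1 of \cite{CFM13mixing}) gives a contradiction.

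Your direct computational route is not merely longer; the difficulty you isolate---controlling the high-degree pairs $(j,j')$ in the expansion of $Q_k(u_m z_n u_m^*\Omega)$ uniformly in $n$---is essentially equivalent to proving the mixing property of $A\subset M$ from scratch. Indeed, showing $\langle u_m z_n u_m^*,\eta\rangle\to 0$ for each fixed $\eta\in\H^{\otimes k}\ominus\mathbb{C}e^{\otimes k}$, uniformly over $\|z_n\|\le 1$, is precisely the content of mixing once you unwind the definitions. So you are attempting to re-derive, via the Riesz basis and the $q$-combinatorics of Section~2, a result that is already available in the literature and that the paper simply cites. Replace your plan with the weak-limit argument above.
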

\begin{proof}
Suppose this is not the case, then there exists some $k\in\mathbb{N}$ with
 \[\H^{\otimes k} \ni z_0=\lim_{n\to \omega}Q_k(z_n)\neq 0.\]
 In particular, $z_n\to z$ weakly for some non-zero $z\in M$. Clearly $z\in C'\cap M\ominus A$.
 
However, it is known that the generator masa $A$ is \textit{mixing} in $M$(see \cite{BikramMukherjee16generatormasa},\cite{wen2016singularityinq-gaussian}). Thus by Proposition 5.1 in \cite{CFM13mixing} we must have that $C'\cap M=A$, a contradiction.
\end{proof}

The next estimate will be essential in order to establish the right-$A$ modularity of $L_N$.
\begin{lem}\label{lem:estimate for right-A modularity}
Let $x\in L^2(M)\ominus L^2(A)$ whose Fourier expansion along $\{\xi_{r,s}^i:i\in I, r,s\geq 0\}$ is of the form
\[x=\sum_{r\geq N+1,s\geq 0}\lambda_{r,s}\xi_{r,s},\]
where $\xi=\xi^i$ for some fixed $i\in I$ with $\xi \in T^t$. Then we have 
\begin{equation}
\begin{split}
\left\|L_N\left(a_r(e)^kx\right)\right\|^2_2\leq \dfrac{4kB_qC(|q|)^3D(q)^6}{(1-q)^{k}(1-q^{2t})} \sum_{r,s\geq 0}q^{2(t+s+r-k-N-1)}|\lambda_{r,s}|^2\|\xi_{r,s}\|_2^2,
\end{split}
\end{equation}
for all $k,N\geq 0.$
\end{lem}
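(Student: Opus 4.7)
The plan is to expand $a_r(e)^k x$ using Equation (\ref{annilators acting on basis}), apply $L_N$ to isolate the surviving terms, and then deduce the estimate via the Riesz-basis upper bound from Theorem \ref{thm: Radulescu basis} together with termwise $q$-estimates.

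Setting $c_{i,j}^{r,s}:=\dfrac{[r]_q!}{[r-i]_q!}\cdot\dfrac{[s]_q!}{[s-j]_q!}\cdot\binom{k}{i}_q q^{(t+s-j)i}$, Equation (\ref{annilators acting on basis}) gives $a_r(e)^k\xi_{r,s}=\sum_{i+j=k}c_{i,j}^{r,s}\xi_{r-i,s-j}$. The projection $L_N$ kills every $\xi_{r',s'}$ with $r'>N$, so after applying it only pairs $(i,j)$ with $i\geq r-N$ survive. Because $r\geq N+1$, every such $i$ satisfies $i\geq 1$, and so for each $(r,s)$ at most $k$ terms contribute. Invoking the Riesz upper bound from Theorem \ref{thm: Radulescu basis} and then Cauchy--Schwarz (to turn each squared $k$-term sum into $k$ times the sum of squares) yields
\[\|L_N(a_r(e)^k x)\|_2^2 \leq k B_q \sum_{r\geq N+1,\,s\geq 0}|\lambda_{r,s}|^2 \sum_{i\geq r-N}|c_{i,k-i}^{r,s}|^2\,\|\xi_{r-i,s-k+i}\|^2.\]

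Next, I would estimate the individual factor $|c_{i,k-i}^{r,s}|^2\|\xi_{r-i,s-k+i}\|^2$ in terms of $\|\xi_{r,s}\|^2$. Using the identities $\frac{[r]_q!}{[r-i]_q!}=[i]_q!\binom{r}{i}_q$ and $\frac{[s]_q!}{[s-(k-i)]_q!}=[k-i]_q!\binom{s}{k-i}_q$, together with Corollary \ref{cor: estimates of norms} to replace $\|\xi_{r-i,s-k+i}\|^2$ by $[r+s-k]_q!$ and $\|\xi_{r,s}\|^2$ by $[r+s]_q!$ (up to a factor of $4$), one can reorganize the ratio into $[k]_q!\binom{r+s}{k}_q^{-1}$ times a product of three $q$-binomial coefficients. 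The uniform bounds of Lemma \ref{estimates for binomimal coefficients} control every $q$-binomial by $C(|q|)D(q)$, while $[k]_q!$ is bounded by $D(q)/(1-q)^k$, producing the claimed factor $D(q)^6 C(|q|)^3/(1-q)^k$ and leaving only the pure $q$-power $q^{2(t+s-k+i)i}$ to be handled.

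The last step is to convert this pure $q$-power into the target $q^{2(t+s+r-k-N-1)}$ while absorbing the $i$-dependence into a geometric series. Substituting $i=r-N+m$ with $m\geq 0$, and setting $\alpha:=t+s-k+r-N$ and $\beta:=r-N\geq 1$, one finds $t+s+r-k-N-1=\alpha-1$ and $(t+s-k+i)i=\alpha\beta+(\alpha+\beta)m+m^2$. A direct algebraic manipulation then yields $|q|^{2(t+s-k+i)i}\leq|q|^{2(t+s+r-k-N-1)}\cdot|q|^{2tm}$, after which summing the geometric series $\sum_{m\geq 0}|q|^{2tm}=(1-q^{2t})^{-1}$ produces the asserted $(1-q^{2t})^{-1}$ factor. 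The main obstacle will be verifying the quadratic inequality in this last step uniformly in $(r,s,k,N,t)$; this requires a short case analysis based on the sign of $\alpha$ and the constraints $\beta\geq 1$ together with $j=k-i\leq s$ (forced by the nonvanishing of $c_{i,k-i}^{r,s}$).
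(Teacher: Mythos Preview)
Your approach is essentially the same as the paper's: expand $a_r(e)^k$ via formula (\ref{annilators acting on basis}), apply the Riesz bound and Cauchy--Schwarz, control the $q$-factorial and $q$-binomial pieces termwise, and reduce the remaining $q$-power to a geometric series. The only organizational differences are that you keep the input indices $(r,s)$ throughout (the paper reindexes to the output indices before estimating the coefficients), and that you handle the exponent via a direct quadratic inequality in place of the paper's ``$ab\ge a-b$'' step. Both routes reach the same bound.

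Two small remarks. First, your count of $q$-binomials is off by a square: in $|c_{i,k-i}^{r,s}|^2$ the factors $\binom{r}{i}_q$ and $\binom{s}{k-i}_q$ appear \emph{squared}, so together with $\binom{r+s}{k}_q^{-1}$ and $[k]_q!\le D(q)/(1-q)^k$ you obtain $D(q)^6C(|q|)^5$, not $D(q)^6C(|q|)^3$. This is harmless, since only the qualitative form of the constant is used downstream. Second, the quadratic inequality you flag as the ``main obstacle'' is in fact immediate once you use both constraints you identified: setting $\gamma:=(\alpha+m)-t\ge 0$ (this is exactly $s-j\ge 0$, i.e.\ $j\le s$) and recalling $\beta\ge 1$, $i=\beta+m\ge 1$, one computes
\[
(\alpha+m)(\beta+m)-tm-(\alpha-1)=t(\beta-1)+\gamma(i-1)+m+1\ \ge\ 1,
\]
which gives $|q|^{2(t+s-k+i)i}\le |q|^{2(t+s+r-k-N-1)}|q|^{2tm}$ as claimed, and summing over $m\ge 0$ yields the $(1-q^{2t})^{-1}$ factor.
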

\begin{proof}
 We let $\lambda_{r,s}=0$ for all $r\leq N$ and $s\geq 0$. By (\ref{annilators acting on basis}), we have
 \begin{equation*}
 \begin{split}
 & \quad L_N\left(a_r(e)^k\sum_{r\geq N+1,s\geq 0}\lambda_{r,s}\xi_{r,s}\right)\\
 &=L_N\left(\sum_{r\geq N+1,s\geq 0}\lambda_{r,s}\sum_{i+j=k,i,j\geq 0}\dfrac{[r]_q!}{[r-i]_q!}\cdot \dfrac{[s]_q!}{[s-j]_q!}\cdot\binom{k}{i}_qq^{(t+s-j)i}\xi_{r-i,s-j}\right)\\
 &=\sum_{r\leq N, s\geq 0}\xi_{r,s}\sum_{i+j=k,r+i\geq N+1,j\geq 0}\lambda_{r+i,s+j}\dfrac{[r+i]_q!}{[r]_q!}\cdot \dfrac{[s+j]_q!}{[s]_q!}\cdot\binom{k}{i}_qq^{(t+s)i}.
 \end{split}
 \end{equation*}
 Note that for all $-1<q<1$,
 \begin{itemize}
 \item $\dfrac{[r+i]_q!}{[r]_q!}\cdot \dfrac{[s+j]_q!}{[s]_q!}\leq \dfrac{D(q)^2}{(1-q)^k},\quad \forall i+j=k$;
 \item $\binom{k}{i}_q\leq D(q)C(|q|)$.
 \end{itemize}
 Therefore, for each $r\leq N$,
 \begin{align*}
&\quad\left|\sum_{i+j=k,r+i\geq N+1,j\geq 0}\lambda_{r+i,s+j}\dfrac{[r+i]_q!}{[r]_q!}\cdot \dfrac{[s+j]_q!}{[s]_q!}\cdot\binom{k}{i}_qq^{(t+s)i}\right|^2\\
 &\leq \left|\sum_{i+j=k,r+i\geq N+1,j\geq 0}|\lambda_{r+i,s+j}|\|\xi_{r+i,s+j}\|_2\cdot \dfrac{1}{\|\xi_{r+i,s+j}\|_2}\dfrac{C(|q|)D(q)^3}{(1-q)^k}|q|^{(t+s)i}\right|^2\\
 &\leq \left(\sum_{i+j=k,r+i\geq N+1,j\geq 0}|\lambda_{r+i,s+j}|^2\|\xi_{r+i,s+j}\|_2^2\right)\left(\sum_{i+j=k,r+i\geq N+1,j\geq 0}\dfrac{1}{\|\xi_{r+i,s+j}\|_2^2}\dfrac{C(|q|)^2D(q)^6}{(1-q)^{2k}}q^{2(t+s)i}\right)\\
 &\stackrel{(\ref{formula: estimate of norms})}{\leq} \left(\sum_{i+j=k,r+i\geq N+1,j\geq 0}|\lambda_{r+i,s+j}|^2\|\xi_{r+i,s+j}\|_2^2\right) \dfrac{2}{[r+s+k]_q!}\cdot\dfrac{C(|q|)^2D(q)^6}{(1-q)^{2k}}\sum_{i+j=k,r+i\geq N+1,j\geq 0}q^{2(t+s)i}\\
 &\leq \left(\sum_{i+j=k,r+i\geq N+1,j\geq 0}|\lambda_{r+i,s+j}|^2\|\xi_{r+i,s+j}\|_2^2\right) \dfrac{2}{[r+s+k]_q!}\cdot\dfrac{C(|q|)^2D(q)^6}{(1-q)^{2k}}\cdot\dfrac{q^{2(t+s)(N+1-r)}}{1-q^{2(t+s)}}\\
 &\leq \left(\sum_{i+j=k,r+i\geq N+1,j\geq 0}|\lambda_{r+i,s+j}|^2\|\xi_{r+i,s+j}\|_2^2\right)\dfrac{2C(|q|)^2D(q)^6}{(1-q)^{2k}(1-q^2)}\cdot\dfrac{q^{2(t+s+r-N-1)}}{[r+s+k]_q!},
  \end{align*}
  where in the last inequality we used the fact that $ab\geq a-b$ for all $a,b\geq 1$.
  
 Finally, we have 
 \begin{align*}
& \quad \left\|L_N\left(a_r(e)^kx\right)\right\|^2_2\\
&=\left\|\sum_{r\leq N, s\geq 0}\xi_{r,s}\sum_{i+j=k,r+i\geq N+1,j\geq 0}\lambda_{r+i,s+j}\dfrac{[r+i]_q!}{[r]_q!}\cdot \dfrac{[s+j]_q!}{[s]_q!}\cdot\binom{k}{i}_qq^{(t+s)i}\right\|_2^2\\
&\leq B_q\sum_{r\leq N, s\geq 0}\|\xi_{r,s}\|_2^2\left|\sum_{i+j=k,r+i\geq N+1,j\geq 0}\lambda_{r+i,s+j}\dfrac{[r+i]_q!}{[r]_q!}\cdot \dfrac{[s+j]_q!}{[s]_q!}\cdot\binom{k}{i}_qq^{(t+s)i}\right|^2\\
&\leq B_q\sum_{r\leq N, s\geq 0}\|\xi_{r,s}\|_2^2\cdot\left(\sum_{i+j=k,r+i\geq N+1,j\geq 0}|\lambda_{r+i,s+j}|^2\|\xi_{r+i,s+j}\|_2^2\right)\dfrac{2C(|q|)^2D(q)^6}{(1-q)^{2k}(1-q^2)}\cdot\dfrac{q^{2(t+s+r-N-1)}}{[r+s+k]_q!}\\
&\stackrel{(\ref{formula: estimate of norms})}{\leq} 2B_q\sum_{r\leq N, s\geq 0}[r+s]_q!\left(\sum_{i+j=k,r+i\geq N+1,j\geq 0}|\lambda_{r+i,s+j}|^2\|\xi_{r+i,s+j}\|_2^2\right)\dfrac{2C(|q|)^2D(q)^6}{(1-q)^{2k}(1-q^2)}\cdot\dfrac{q^{2(t+s+r-N-1)}}{[r+s+k]_q!}\\
&\leq 4B_q\sum_{r\leq N, s\geq 0}\left(\sum_{i+j=k,r+i\geq N+1,j\geq 0}|\lambda_{r+i,s+j}|^2\|\xi_{r+i,s+j}\|_2^2\right)\dfrac{C(|q|)^3D(q)^6}{(1-q)^{k}(1-q^2)}\cdot q^{2(t+s+r-N-1)}\\
&\leq \dfrac{4kB_qC(|q|)^3D(q)^6}{(1-q)^{k}(1-q^2)} \sum_{r,s\geq 0}q^{2(t+s+r-k-N-1)}|\lambda_{r,s}|^2\|\xi_{r,s}\|_2^2,
 \end{align*}
 where the second last inequality is due to the fact that 
 \[\dfrac{[r+s]_q!}{[r+s+k]_q!}\leq (1-q)^kC(|q|).\]
\end{proof}

\begin{prop}\label{right-A modularity}
For all $N,m\in \mathbb{N}$ and for any $x=(x_n)\in (L^2(M\ominus A))^{\omega}$ such that $\lim _{n\to \omega}Q_k(x_n)\to 0, \forall k\in \mathbb{N}$, we have 
\[\lim_{n\to \omega}\|L_N(s_r(e^{\otimes m})x_n)-s_r(e^{\otimes m})L_N(x_n)\|_2=0.\] 
In particular, for all unitary $u$ in the C$^*$-algebra $C^*(s(e))$ generated by $s(e)$, $N\in \mathbb{N}$ and $(z_n)\in M^{\omega}\ominus A^{\omega}\cap C'$, 
\[\lim_{n\to \omega}\|L_N(z_nu)-L_N(z_n)u\|_2=0.\]
\end{prop}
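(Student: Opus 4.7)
The plan is to establish the first claim via the Wick formula combined with a cutoff argument based on Lemma~\ref{lem:estimate for right-A modularity}, and then to deduce the second claim by norm-approximating unitaries of $C^*(s(e))$ by polynomials in $s(e)$.

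For the main estimate, I apply the Wick formula for the right operators,
$$s_r(e^{\otimes m})=\sum_{i=0}^{m}\binom{m}{i}_q\,c_r(e)^{m-i}\,a_r(e)^i,$$
and analyse each summand. Two structural facts simplify the commutator. First, $L_N$ commutes with $c_r(e)$: indeed $c_r(e)\xi^i_{r,s}=\xi^i_{r,s+1}$ preserves the left index, and $c_r(e)$ also stabilises $\F_q(\mathbb{R}e)$, on which $L_N$ vanishes. Second, by Lemma~\ref{a(e) acting on words} the operator $a_r(e)$ never raises the left index, so $(I-L_N)\,a_r(e)^i\,L_N=0$. Combining these with the decomposition $x_n=L_N(x_n)+(I-L_N)(x_n)$, the $i$-th Wick contribution to the commutator simplifies to $c_r(e)^{m-i}\,L_N\!\bigl(a_r(e)^i(I-L_N)(x_n)\bigr)$. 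Since $c_r(e)$ is bounded and the $q$-binomials are uniformly controlled by Lemma~\ref{estimates for binomimal coefficients}, it suffices to show, for each $i\in\{0,\dots,m\}$,
$$\lim_{n\to\omega}\Bigl\|L_N\!\left(a_r(e)^i (I-L_N)(x_n)\right)\Bigr\|_2=0.$$

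For this reduced estimate, I expand $(I-L_N)(x_n)=\sum_{i'\in I}\sum_{r\ge N+1,\,s\ge 0}\lambda^{(n),i'}_{r,s}\xi^{i'}_{r,s}$ along the Rietz basis of Theorem~\ref{thm: Radulescu basis}. Because $a_r(e)$ stays within each seed sector (Lemma~\ref{basic orthogonality}), Lemma~\ref{lem:estimate for right-A modularity} applies one seed at a time, and summing yields a bound of the form
$$\Bigl\|L_N\!\left(a_r(e)^i (I-L_N)(x_n)\right)\Bigr\|_2^2 \;\le\; C_{q,i}\sum_{i',\,r\ge N+1,\,s\ge 0}|q|^{2(t_{i'}+r+s-i-N-1)}\,|\lambda^{(n),i'}_{r,s}|^2\,\|\xi^{i'}_{r,s}\|_2^2$$
for a finite constant $C_{q,i}$. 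Fix $\epsilon>0$ and split the sum at a threshold $K$ in the total degree $t_{i'}+r+s$. The ``low-degree'' part ($t_{i'}+r+s\le K$) is a finite combination of the $Q_k(x_n)$'s with $k\le K$, each vanishing in $L^2$-norm along $\omega$ by hypothesis. The ``high-degree'' part ($t_{i'}+r+s>K$) enjoys the uniform factor $|q|^{2(K-i-N-1)}$, and its remaining $\sum|\lambda|^2\|\xi\|^2$-piece is controlled by $A_q^{-1}\sup_n\|x_n\|_2^2$ through the global Riesz bound of Theorem~\ref{thm: Radulescu basis}. Choosing $K$ large enough to drive the high-degree contribution below $\epsilon^2/2$ and then $n$ close to $\omega$ to drive the low-degree contribution below $\epsilon^2/2$ completes the main estimate.

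For the ``in particular'' statement, Lemma~\ref{z escapes Q_k} supplies the hypothesis $Q_k(z_n)\to 0$ used above. By Stone--Weierstrass, polynomials in $s(e)$ are norm-dense in the unital C$^*$-algebra $C^*(s(e))$, and an induction based on the recursion $s(e)\cdot s(e^{\otimes k})=s(e^{\otimes(k+1)})+[k]_q\,s(e^{\otimes(k-1)})$ writes each $s(e)^m$ as a finite $\mathbb{C}$-linear combination of Wick elements $s(e^{\otimes k})\in A$. Right multiplication by $s(e^{\otimes k})$ on $L^2(M)$ coincides with the commutant operator $s_r(e^{\otimes k})$ via the modular conjugation $J$, so the first part of the proposition applies; a three-$\epsilon$ approximation then delivers the conclusion for every unitary $u\in C^*(s(e))$. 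The main technical obstacle I anticipate is the seed-by-seed application of Lemma~\ref{lem:estimate for right-A modularity}: its constants must be summed over the seeds $\xi^{i'}$ in such a way that the final bound depends only on $\sup_n\|x_n\|_2$, so that the cutoff threshold $K$ can indeed be chosen independently of $n$. Once this bookkeeping is tracked carefully, the cutoff scheme above is routine.
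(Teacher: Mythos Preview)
Your proof is correct and follows essentially the same route as the paper: reduce via the Wick decomposition and the commutation of $L_N$ with $c_r(e)$ to controlling $L_N\bigl(a_r(e)^k(I-L_N)x_n\bigr)$, then apply Lemma~\ref{lem:estimate for right-A modularity} seed by seed and exploit the hypothesis $Q_k(x_n)\to 0$. The only cosmetic difference is that where the paper passes without loss of generality to representatives $x_n$ supported in degrees $\ge t_n\to\infty$, you instead split at a fixed threshold $K$ and handle the low-degree and high-degree parts separately; these are equivalent bookkeeping devices for the same estimate.
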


\begin{proof}
Each $s_r(e^{\otimes m})$ can be written as a finite linear combination of $c_r(e)^ka_r(e)^l$ with $k,l\geq 0$ and $k+l=m$, hence it suffices to show 
\[\lim_{n\to \omega}\left\|L_N(c_r(e)^ka_r(e)^lx_n)-c_r(e)^ka_r(e)^lL_N(x_n)\right\|_2=0,\]
for all $k,l\geq 0, k+l=m$. 

Since $c_r(e)L_N(\xi)=L_N(c_r(e)\xi)$ for all $\xi \in L^2(M\ominus A)$, it then reduces to prove
\[\lim_{n\to \omega}\left\|L_N(a_r(e)^kx_n)-a_r(e)^kL_N(x_n)\right\|_2=0,\]
for all $0\leq k\leq m$.

Suppose that $x_n=\sum_{i\in I, r,s\geq 0}\lambda^{i,n}_{r,s}\xi_{r,s}^i$ is the Fourier decomposition along the Riesz basis $\{\xi^i_{r,s}\}$, observe that 
\begin{align*}
L_N(a_r(e)^kx_n)-a_r(e)^kL_N(x_n)&=L_N(a_r(e)^kx_n)-L_N\left(a_r(e)^kL_N(x_n)\right)\\
&=L_N\left(a_r(e)^k(1-L_N)(x_n)\right)\\
&=L_N\left(a_r(e)^k\sum_{i\in I, r\geq N+1,s\geq 0}\lambda^{i,n}_{r,s}\xi_{r,s}^i\right)\\
&=\sum_{i\in I}L_N\left(a_r(e)^k\sum_{ r\geq N+1,s\geq 0}\lambda^{i,n}_{r,s}\xi_{r,s}^i\right).
\end{align*}
By Lemma \ref{lem:estimate for right-A modularity}, we have 
\begin{align*}
\|L_N(a_r(e)^kx_n)-a_r(e)^kL_N(x_n)\|^2_2
&=\sum_{i\in I}\left\|L_N\left(a_r(e)^k\sum_{ r\geq N+1,s\geq 0}\lambda^{i,n}_{r,s}\xi_{r,s}^i\right)\right\|_2^2\\
&\leq \sum_{i\in I}\dfrac{4kB_qC(|q|)^3D(q)^6}{(1-q)^{k}(1-q^{2})} \sum_{r,s\geq 0}q^{2(|\xi^i|+s+r-k-N-1)}|\lambda_{r,s}^{i,n}|^2\|\xi_{r,s}^i\|_2^2.
\end{align*}
We may assume that for each $n$, there exists a natural number $t_n$ such that: (1) $t_n\to \infty$ as $n\to\omega$ and (2) $\lambda_{r,s}^{i,n}=0$ for any $i,r,s$ with $|\xi_i|+r+s\leq t_n+N+1$. Thus

\begin{align*}
\|L_N(a_r(e)^kx_n)-a_r(e)^kL_N(x_n)\|^2_2
&\leq\dfrac{4kB_qC(|q|)^3D(q)^6}{(1-q)^{k}(1-q^{2})}\cdot q^{2(t_n-k)}\sum_{i,r,s }|\lambda_{r,s}^{i,n}|^2\|\xi_{r,s}^i\|_2^2\\
&\leq \dfrac{4kB_qC(|q|)^3D(q)^6}{(1-q)^{k}(1-q^{2})A_q}\cdot q^{2(t_n-k)}\|x_n\|_2^2.
\end{align*}
As $n\to \omega$, $t_n$ diverges to infinity. Therefore the right-hand side of the above inequality converges to 0 (uniformly on the unit ball of $L^2(M^{\omega}\ominus A^{\omega})$).

The case for  $u\in C^*(s(e))$ and $z=(z_n)\in M^{\omega}\ominus A^{\omega}\cap C'$ then is an easy consequence of Lemma \ref{z escapes Q_k} and the fact that $\{s(e^{\otimes n}):n\geq 0\}$ spans norm-densely in $C^*(s(e))$.
\end{proof}

Our next step is to show that for any $x\in M\ominus A$ and for all sequence of unitary elements $(u_k)_k$ in $C^*(s(e))$ which goes to 0 weakly, $(u_kL_N(x))_k$ is asymptotically orthogonal to the subspace $L_N$.

\begin{prop}\label{prop:s(e^n) maps z orthogonal to L_N}
There exists a positive number $G(q)>0$  such that 
\begin{align}
\left \|L_{N_1}\left(s(e^{\otimes n})L_{N_2}(x)\right)\right\|_2\leq G(q)\cdot (n+1)^{3/2}\cdot |q|^{(n-N_1-N_2)}\|x\|_2
\end{align}
for any $N_1,N_2\in \mathbb{N}$ and $x\in M\ominus A$. The choice of $G(q)$ is independent of $N_1,N_2,x$.
\end{prop}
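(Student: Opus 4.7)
The plan is to expand $s(e^{\otimes n})$ via the Wick formula and analyze each resulting summand using the R\u adulescu basis estimates. For $n$ copies of the unit vector $e$, the Wick formula \eqref{Wick formula} collapses (via the standard $q$-shuffle identity $\sum_{\sigma \in S_n/(S_{n-k}\times S_k)} q^{|\sigma|}=\binom{n}{k}_q$) to
\[
s(e^{\otimes n})=\sum_{k=0}^{n}\binom{n}{k}_q c(e)^k a(e)^{n-k}.
\]
Since $c(e)^k \xi^i_{r,s}=\xi^i_{r+k,s}$, the identity $L_{N_1} c(e)^k = c(e)^k L_{N_1-k}$ holds on $L^2(M)\ominus L^2(A)$, so only $k \in [0, N_1]$ contribute, giving
\[
L_{N_1}\bigl(s(e^{\otimes n})L_{N_2}(x)\bigr)=\sum_{k=0}^{N_1}\binom{n}{k}_q c(e)^k L_{N_1-k}\bigl(a(e)^{n-k}L_{N_2}(x)\bigr).
\]

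Next, for each $k$, I would expand $L_{N_2}(x)=\sum_{i\in I,\, r_0\leq N_2,\,s_0\geq 0}\lambda^i_{r_0,s_0}\xi^i_{r_0,s_0}$ (recall $t_i\geq 1$ when $i \in I$) and apply Lemma \ref{annilators acting on basis} to each $a(e)^{n-k}\xi^i_{r_0,s_0}$. The projection $L_{N_1-k}$ then retains only those terms $\xi^i_{r_0-j,\, s_0-i'}$ with $r_0 - j \leq N_1 - k$. The crucial observation is that the constraint $j \leq r_0 \leq N_2$ from the annihilator formula forces $i' = n - k - j \geq n - k - N_2 \geq n - N_1 - N_2$, and because $t_i + r_0 - j \geq 1$, the $q$-weight obeys
\[
|q|^{(t_i+r_0-j)i'}\leq |q|^{i'}\leq |q|^{n-N_1-N_2},
\]
so every surviving term carries the desired exponential decay factor.

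To turn these coefficient bounds into an $L^2$-bound, I would apply Cauchy--Schwarz to the (at most $n-k+1$)-term sum defining each Fourier coefficient of $L_{N_1-k}(a(e)^{n-k}L_{N_2}(x))$ along the R\u adulescu basis, and combine this with the Riesz basis estimate of Theorem \ref{thm: Radulescu basis} and the norm control of Corollary \ref{cor: estimates of norms}. The uniform bounds $\binom{n}{k}_q,\binom{n-k}{i'}_q\leq D(q)C(|q|)$ from Lemma \ref{estimates for binomimal coefficients}, together with the fact that each $[m]_q$ stays uniformly bounded for $|q|<1$, keep the estimate manageable. Summing over $k\in[0,N_1]$ via the triangle inequality and collecting the $\sqrt{n+1}$ coming from Cauchy--Schwarz produces the polynomial prefactor $(n+1)^{3/2}$, while all $q$-dependent constants are absorbed into $G(q)$.

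The main obstacle is the combinatorial bookkeeping: one has eight indices $(i,k,j,i',r_0,s_0,r_1,s_1)$ moving simultaneously, and it takes care to arrange the Cauchy--Schwarz applications so that the decay $|q|^{n-N_1-N_2}$ is preserved at each step, only a polynomial prefactor in $n$ accumulates, and the final constant $G(q)$ is independent of $N_1$ and $N_2$. The most delicate point is showing that the factorial ratios such as $[s_0]_q!/[s_0-i']_q!$, which can involve up to $i'\approx n$ factors, do not inflate the bound; this works because the $q$-weight $|q|^{i'}$ available from the large-$i'$ regime dominates the factorial growth for $|q|$ sufficiently small, so every extra factor $[m]_q$ is paid for by an extra factor of $|q|$ coming from the exponent $(t_i+r_0-j)i'-(n-N_1-N_2)\geq 0$.
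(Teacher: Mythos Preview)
Your approach is essentially identical to the paper's: expand $s(e^{\otimes n})$ by the Wick formula, note that only $k\le N_1$ survives the outer $L_{N_1}$, apply the annihilation formula \eqref{annilators acting on basis} to $L_{N_2}(x)$, observe that the surviving index $i'$ is at least $n-N_1-N_2$, and finish with Cauchy--Schwarz plus the Riesz basis bounds. The key decay mechanism you identify---$|q|^{(t_i+r_0-j)i'}\le |q|^{n-N_1-N_2}$ because $t_i+r_0-j\ge 1$---is exactly the one the paper uses.

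One point of caution concerns your final paragraph. You claim the factorial ratios $[s_0]_q!/[s_0-i']_q!$ are ``paid for'' by the excess exponent $(t_i+r_0-j)i'-(n-N_1-N_2)$, but this excess can vanish (take $t_i=1$, $r_0=j=N_2$, $k=N_1$), so there is no spare $|q|$-power to absorb the factorials in the worst case. The paper does not attempt this balancing act: it simply bounds the factorial ratios by $D(q)^2/(1-q)^{n-k}$ and the norm ratio $\|\xi_{r,s}\|^2/[r+s+n-2k]_q!$ by $(1-q)^{n-2k}C(|q|)$, which leaves a residual factor $(1-q)^{-n/2}$ in the final constant $G(q)$. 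That is permitted, since the statement only requires $G(q)$ to be independent of $N_1,N_2,x$, not of $n$; and in the sole application (the Claim inside Theorem~\ref{thm: essential support}) the product $(1-q)^{-n/2}|q|^{n-N_1-N_2}$ still decays for $|q|$ small. So your route still works---just replace the ``excess exponent'' argument with the straightforward absorption of $(1-q)^{-n/2}$ into $G(q)$.
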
  
\begin{proof}
By the Wick formula (\ref{Wick formula}), one has 
\begin{align*}
L_{N_1}\left (s(e^{\otimes n})L_{N_2}(x)\right)=\sum_{k=0}^{n}\binom{n}{k}_qL_{N_1}\left(c(e)^ka(e)^{n-k}L_{N_2}(x)\right).
\end{align*}
Also, notice that in the above summation, only the terms with $0\leq k\leq N_1$ will be able to contribute something non-zero. 

Let us estimate each summand for $0\leq k\leq N_1$. Suppose $x=\sum_{i}\sum_{r,s}\lambda^i_{r,s}\xi^i_{r,s}$ be the Fourier decomposition along $\{\xi^i_{r,s}\}$, then  
\begin{align*}
&\quad L_{N_1}\left(c(e)^ka(e)^{n-k}L_{N_2}(x)\right)\\
&=\sum_iL_{N_1}\left(c(e)^k\sum_{r\leq N_2,s\geq 0}\lambda^i_{r,s}\sum_{j_1+j_2=n-k,j_2\leq r}\dfrac{[r]_q!}{[r-j_2]_q!} \dfrac{[s]_q!}{[s-j_1]_q!}\cdot\binom{n-k}{j_1}_q\cdot q^{(|\xi^i|+r-j_2)j_1}\xi^i_{r-j_2,s-j_1}\right)\\
&=\sum_iL_{N_1}\left(\sum_{r\leq N_2,s\geq 0}\lambda^i_{r,s}\sum_{j_1+j_2=n-k,j_2\leq r}\dfrac{[r]_q!}{[r-j_2]_q!} \dfrac{[s]_q!}{[s-j_1]_q!}\cdot\binom{n-k}{j_1}_q\cdot q^{(|\xi^i|+r-j_2)j_1}\xi^i_{r+k-j_2,s-j_1}\right)\\
&=\sum_i\left(\sum_{k\leq r\leq N_1, s\geq 0}\xi^i_{r,s}\sum_{\substack{j_1+j_2=n-k,\\j_1\geq n-2k+r-N_2}}\lambda^i_{r+j_2-k,s+j_1}\dfrac{[r+j_2-k]_q!}{[r-k]_q!} \dfrac{[s+j_1]_q!}{[s]_q!}\cdot\binom{n-k}{j_1}_q\cdot q^{(|\xi^i|+r-k)j_1}\right).
\end{align*}
Now for $k\leq r\leq N_1$,
\begin{align*}
&\quad\left|\sum_{\substack{j_1+j_2=n-k, \\j_1\geq n-2k+r-N_2}}\lambda^i_{r+j_2-k,s+j_1}\dfrac{[r+j_2-k]_q!}{[r-k]_q!} \dfrac{[s+j_1]_q!}{[s]_q!}\cdot\binom{n-k}{j_1}_qq^{(|\xi^i|+r-k)j_1}\right|^2\\
&\leq \sum_{\substack{j_1+j_2=n-k, \\j_1\geq n-2k+r-N_2}}|\lambda^i_{r+j_2-k,s+j_1}|\cdot\dfrac{D(q)^2}{(1-q)^{n-k}}\cdot C(|q|)D(q)\cdot|q|^{j_1}\\
&\leq \left(\sum_{\substack{j_1+j_2=n-k,\\j_1\geq n-2k+r-N_2}}|\lambda_{r+j_2-k,s+j_1}^i|^2\|\xi_{r+j_2-k,s+j_1}\|_2^2\right)\left(\sum_{\substack{j_1+j_2=n-k,\\j_1\geq n-2k+r-N_2}}\dfrac{1}{\|\xi_{r+j_2-k,s+j_1}\|_2^2}\dfrac{C(|q|)^2D(q)^6}{(1-q)^{2(n-k)}}q^{2j_1}\right)\\
 &\leq \left(\sum_{\substack{j_1+j_2=n-k, \\j_1\geq n-2k+r-N_2}}|\lambda_{r+j_2-k,s+j_1}^i|^2\|\xi_{r+j_2-k,s+j_1}\|_2^2\right) \dfrac{2}{[r+s+n-2k]_q!}\dfrac{C(|q|)^2D(q)^6}{(1-q)^{2(n-k)}}\cdot\dfrac{q^{2(n-N_1-N_2)}}{1-q^2}\\
 &\leq \left(\sum_{\substack{j_1+j_2=n-k, \\j_1\geq n-2k+r-N_2}}|\lambda_{r+j_2-k,s+j_1}^i|^2\|\xi_{r+j_2-k,s+j_1}\|_2^2\right)\dfrac{2C(|q|)^2D(q)^6}{(1-q)^{2(n-k)}(1-q^2)[r+s+n-2k]_q!}\cdot q^{2(n-N_1-N_2)}.
\end{align*}
Hence 
\begin{align*}
&\quad \left\|L_{N_1}\left(c(e)^ka(e)^{n-k}L_{N_2}(x)\right)\right\|_2^2\\
&=\sum_i \left\| \sum_{k\leq r\leq N_1, s\geq 0}\xi^i_{r,s}\sum_{\substack{j_1+j_2=n-k, \\j_1\geq n-2k+r-N_2}}\lambda^i_{r+j_2-k,s+j_1}\dfrac{[r+j_2-k]_q!}{[r-k]_q!} \dfrac{[s+j_1]_q!}{[s]_q!}\cdot\binom{n-k}{j_1}_q\cdot q^{(|\xi^i|+r-k)j_1}\right\|_2^2\\
&\leq B_q\sum_i \sum_{k\leq r\leq N_1, s\geq 0}\|\xi^i_{r,s}\|_2^2\cdot\left|\sum_{\substack{j_1+j_2=n-k, \\j_1\geq n-2k+r-N_2}}\lambda^i_{r+j_2-k,s+j_1}\dfrac{[r+j_2-k]_q!}{[r-k]_q!} \dfrac{[s+j_1]_q!}{[s]_q!}\cdot\binom{n-k}{j_1}_q\cdot q^{(|\xi^i|+r-k)j_1}\right|^2\\
&\leq B_q\sum_i \sum_{k\leq r\leq N_1, s\geq 0}\left(\sum_{\substack{j_1+j_2=n-k, \\j_1\geq n-2k+r-N_2}}|\lambda_{r+j_2-k,s+j_1}^i|^2\|\xi_{r+j_2-k,s+j_1}\|_2^2\right)\cdot\dfrac{4C(|q|)^3D(q)^6}{(1-q)^{n}(1-q^2)}\cdot q^{2(n-N_1-N_2)}\\
&\leq \dfrac{4(n+1)B_qC(|q|)^3D(q)^6}{(1-q)^{n}(1-q^2)A_q}\cdot q^{2(n-N_1-N_2)} \|x\|_2^2.
\end{align*}

Finally, using the rough estimate $\binom{n}{k}_q\leq C(|q|)D(q)$ and the triangle inequality, we conclude that 
\[\left \|L_{N_1}\left(s(e^{\otimes n})L_{N_2}(x)\right)\right\|_2\leq (n+1)C(|q|)D(q)\sqrt{\dfrac{4(n+1)B_qC(|q|)^3D(q)^6}{(1-q)^{n}(1-q^2)A_q}}\cdot |q|^{(n-N_1-N_2)} \|x\|_2.\]
Set $G(q)=\dfrac{2B_q^{1/2}C(|q|)^{5/2}D(q)^4}{(1-q)^{n/2}(1-q^2)^{1/2}A_q^{1/2}}$, we are done.
\end{proof}


Finally, we are ready to prove our main result in the section.

\begin{thm}\label{thm: essential support}
For all $N\in \mathbb{N}$ and $z=(z_n)_n\in M^{\omega}\ominus A^{\omega}\cap C'$, we have 
\begin{align}
\lim_{n\to \omega}\|L_N(z_n)\|_2=\lim_{n\to \omega}\|R_N(z_n)\|_2=0.
\end{align}
\end{thm}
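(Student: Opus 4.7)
The plan is to prove $\lim_{n \to \omega}\|L_N(z_n)\|_2 = 0$; the $R_N$ statement follows by a fully symmetric argument, exchanging left and right creation-annihilation operators throughout. Arguing by contradiction, suppose $\delta := \lim_{n \to \omega}\|L_N(z_n)\|_2 > 0$. Since $C \subset A$ is diffuse, pick unitaries $u_k \in C$ with $u_k \to 0$ weakly in $M$. Because $u_k$ lies in the abelian algebra $A$ with orthonormal $L^2(A)$-basis $\{s(e^{\otimes m})/\sqrt{[m]_q!}\}_{m \geq 0}$, one has $u_k = \sum_m a_m^k s(e^{\otimes m})/\sqrt{[m]_q!}$ with $\sum_m |a_m^k|^2 = 1$, and weak convergence gives $a_m^k \to 0$ as $k \to \infty$ for each fixed $m$. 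The right-$A$-asymptotic modularity of $L_N$ from Proposition~\ref{right-A modularity} (extended from $C^*(s(e))$ to the $u_k \in A$ via termwise application to the above expansion together with a dominated-convergence argument) combined with $\|[u_k, z_n]\|_2 \to 0$ as $n \to \omega$ (from $z \in C'$) yields $\lim_{n \to \omega}\|L_N(u_k z_n) - L_N(z_n) u_k\|_2 = 0$; since right multiplication by the unitary $u_k$ is isometric on $L^2$, this forces $\lim_{n \to \omega}\|L_N(u_k z_n)\|_2 = \delta$ for every $k$.

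The strategy is now to show $\lim_{n \to \omega}\|L_N(u_k z_n)\|_2 \to 0$ as $k \to \infty$, contradicting $\delta > 0$. Fix $\epsilon > 0$ and split $u_k = u_k^{\leq M} + u_k^{>M}$ for a cutoff $M = M(\epsilon)$. On the finite-dimensional subspace $\mathrm{span}\{s(e^{\otimes m}) : m \leq M\}$ of $L^2(A)$, operator norm and $L^2$-norm are equivalent, so $\|u_k^{\leq M}\|_\infty \to 0$ as $k \to \infty$, giving $\|L_N(u_k^{\leq M} z_n)\|_2 \to 0$ uniformly in $n$. For the high-$m$ tail, decompose $z_n = L_N(z_n) + (1-L_N)(z_n)$. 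For the $L_N(z_n)$ piece, Proposition~\ref{prop:s(e^n) maps z orthogonal to L_N} combined with Cauchy-Schwarz and $\sum_m|a_m^k|^2 \leq 1$ gives
\[ \|L_N(u_k^{>M} L_N(z_n))\|_2 \leq G(q)|q|^{-2N}\left(\sum_{m > M} \frac{(m+1)^3 q^{2m}}{[m]_q!}\right)^{1/2}\|z_n\|_2, \]
a tail of a convergent series which is $<\epsilon/3$ for $M$ large. For the $(1-L_N)(z_n)$ piece, I require a complementary estimate of the form $\|L_N(s(e^{\otimes m})y)\|_2 \leq H(q,N)\,\phi(m)\,\|y\|_2$ for $y$ supported on $\{\xi^I_{r,s}: r > N\}$, with $\phi(m)$ of at most polynomial growth; Cauchy-Schwarz then makes this piece $<\epsilon/3$ for $M$ large. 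Taking $k$ large to make the low-$m$ piece $<\epsilon/3$ yields $\delta \leq \epsilon$, and since $\epsilon$ is arbitrary, $\delta = 0$, a contradiction.

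The main obstacle is establishing the complementary estimate for the $(1-L_N)(z_n)$ piece with polynomial growth $\phi(m)$. The structural observation is that in the Wick expansion $s(e^{\otimes m}) = \sum_{k=0}^{m}\binom{m}{k}_q c(e)^{m-k} a(e)^k$, landing in $L_N$ starting from $\xi^I_{r,s}$ with $r > N$ forces the left-annihilation count $j$ (out of the $k$ total annihilations in $a(e)^k$) to satisfy $j \geq r + m - k - N$; together with $j \leq k$, this gives $k \geq (r+m-N)/2 > m/2$. In particular, for each fixed basis element $\xi^I_{r,s}$, the quantity $L_N(s(e^{\otimes m})\xi^I_{r,s})$ actually vanishes once $m$ exceeds a threshold depending on $r, s, N$. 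Combining this severe restriction with the $q$-combinatorial identities from Section 2 and the Radulescu-basis norm estimates from Section 3, in parallel to the proof of Proposition~\ref{prop:s(e^n) maps z orthogonal to L_N}, should yield the polynomial bound. The technical difficulty is that a naive absolute-value estimate produces exponential growth in $m$ through the factorial prefactors $[r]_q!/[r-j]_q!$ appearing in formula~(\ref{annilators acting on basis}), so genuine care is needed to extract the actual polynomial behaviour that the severe landing constraint imposes.
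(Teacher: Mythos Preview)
Your approach has a genuine gap: the ``complementary estimate'' you seek, $\|L_N(s(e^{\otimes m})y)\|_2 \leq H(q,N)\,\phi(m)\,\|y\|_2$ for $y$ supported on $\{r > N\}$ with $\phi$ of polynomial growth, is \emph{false}. Take a unit vector $f\perp e$, so $f\in T^1$, and let $y=f_{m,0}$ (which lies in $1-L_0$ for $m\geq 1$). Since $s=0$, Lemma~\ref{a(e) acting on words} gives $a(e)^k f_{m,0}=\tfrac{[m]_q!}{[m-k]_q!}\,f_{m-k,0}$, hence $c(e)^{m-k}a(e)^k f_{m,0}=\tfrac{[m]_q!}{[m-k]_q!}\,f_{2(m-k),0}$; the only term landing in $L_0$ is $k=m$, so $L_0\bigl(s(e^{\otimes m})f_{m,0}\bigr)=[m]_q!\,f$. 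By Corollary~\ref{cor: estimates of norms}, $\|f_{m,0}\|_2^2$ is comparable to $[m]_q!$, so the ratio $\|L_0(s(e^{\otimes m})f_{m,0})\|_2/\|f_{m,0}\|_2$ is comparable to $\sqrt{[m]_q!}$, which grows like $(1-q)^{-m/2}$. Plugging $\phi(m)\sim\sqrt{[m]_q!}$ into your Cauchy--Schwarz step makes $\sum_{m>M}\phi(m)^2/[m]_q!$ divergent, so the tail is never small. Your vanishing observation (that $L_N(s(e^{\otimes m})\xi_{r,s})=0$ once $m>r+2s+N$) is correct but unhelpful here: it controls large $m$ for \emph{fixed} $(r,s)$, whereas the operator-norm bound requires uniformity over all $(r,s)$, and the worst case $r\approx m$ is exactly what the counterexample exploits.

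The paper's proof bypasses this obstruction by a different mechanism that your per-$k$ strategy misses. Instead of bounding each $\|L_N(u_k z_n)\|_2$ separately, it passes to approximants $v_k\in C^*(s(e))$ supported on \emph{disjoint windows} $[M_k+N+k+1,\,N_k]$ with $M_{k+1}>N_k+N$. For such $v_k$, Proposition~\ref{prop:s(e^n) maps z orthogonal to L_N} kills $L_N(v_k L_{M_k}(z_n))$, while the landing constraint you identified (now used in the direction $m<r-N\Rightarrow L_N(s(e^{\otimes m})\xi_{r,s})=0$) kills $L_N\bigl(v_k(1-L_{M_{k+1}})(z_n)\bigr)$; thus $L_N(v_k z_n)\approx L_N\bigl(v_k(L_{M_{k+1}}-L_{M_k})(z_n)\bigr)$. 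The Riesz-basis property makes the slices $(L_{M_{k+1}}-L_{M_k})(z_n)$ almost orthogonal, so $\sum_{k=N_1}^{N_2}\|L_N(v_k z_n)\|_2^2\lesssim\|z_n\|_2^2$ uniformly in $N_2-N_1$, whereas each summand is $\approx\delta^2$ by right-modularity and commutation. Letting $N_2-N_1\to\infty$ forces $\delta=0$. The essential idea is this telescoping/orthogonality trade-off, not a pointwise decay estimate.
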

\begin{proof}
The proof is similar to the ones in \cite{popa83maxinjective}\cite{wen15radialmasa}, but for completeness we include a sketch.

 Fix $N\in \mathbb{N}$. Let $(u_n)_n$ be a sequence of unitary elements in $C$ which converges to 0 weakly. Let $(u'_n)_n$ be a sequence of unitaries in $C^*(s(e))$ such that $\|u_n-u'_n\|_2\leq \dfrac{1}{2^n}$. Then as in the proof of 
\cite[Lemma 9]{wen15radialmasa}, we further approximate $u'_n$ with finitely supported elements in $C^*(s(e))$:\\
 \\
 \textbf{Claim:} There exists a sequence $(v_k)$ of finitely supported  elements in $C^*(s(e))$ and increasing sequences of natural numbers $(n_k)$ and $(M_k)$, a positive constant $H(q)$,  such that 
 \[\|u'_{n_k}-v_k\|_2\leq \dfrac{1}{2^k}, \quad \left\|L_N\left(v_k\left(L_{M_{k+1}}-L_{M_k}\right)(x)\right)-L_N(v_kx)\right\|_2<H(q)|q|^{2k/3}\|x\|_2\] 
 for all $x\in M\ominus A$.\\
\textit{Proof of the Claim.} We choose $(v_k)$, $(n_k)$ and $(M_k)$ inductively. Assume that we have already chosen them up to  $\{M_k,n_{k-1},v_{k-1}\}$. Since $u_n\to 0$ weakly, if we choose a large $n_k$, then it is possible to well-approximate $u'_{n_k}$ in operator-norm with a finitely supported element $v_k$ in $C^*(s(e))$ whose support is contained in $[M_k+N+k+1,N_k]$ for some $N_k\in \mathbb{N}$. By Proposition \ref{prop:s(e^n) maps z orthogonal to L_N}, we have $\|L_N(v_kL_{M_k}(x))\|_2\leq G(q)|q|^{2(k+1)/3}\|x\|_2$ for all $x\in M\ominus A$, where $G(q)$ is a positive constant which only depends on $q$. If we take $M_{k+1}\geq N_k+N+1$ large enough, then we have $\|L_N(v_k(L_{M_{k+1}}(x)-x))\|_2<|q|^k$. Thus we are done. 
 \\
 \\
 Let us continue with the proof. On one hand, using $z\in C'\cap M^{\omega}$ and the asymptotic right-$A$ modularity of $L_N$ as in Proposition \ref{right-A modularity}, we have 
 \begin{align*}
\lim_{n\to\omega}\sum_{k=N_1}^{N_2}\left\langle L_{N}(u_{n_k} z_n),L_{N}(u_{n_k}z_n)\right\rangle
&=\lim_{n\to\omega}\sum_{k=N_1}^{N_2}\left\langle L_{N}( z_nu_{n_k}),L_{N}(z_nu_{n_k})\right\rangle  \\
&\geq \lim_{n\to\omega}\sum_{k=N_1}^{N_2} \left(\left\langle L_{N}( z_nu'_{n_k}),L_{N}(z_nu'_{n_k})\right\rangle-\left\|L_N(z_n(u_{n_k}-u'_{n_k}))\right\|_2^2\right)\\
&\geq\lim_{n\to\omega}\sum_{k=N_1}^{N_2}\left(\left\langle L_{N}( z_n)u'_{n_k},L_{N}(z_n)u'_{n_k}\right\rangle-\dfrac{\|L_N\|^2\|z_n\|^2}{2^{2n_k}}\right)\\
&\geq(N_2-N_1)\lim_{n\to \omega}\left(\left\|L_{N}(z_n)\right\|_2^2-\dfrac{\|L_N\|^2\|z_n\|^2}{2^{2N_1}}\right).
\end{align*}
On the other hand,
\begin{align*}
\lim_{n\to\omega}\sum_{k=N_1}^{N_2}\left\langle L_{N}\left(u_k z_n\right),L_{N}(u_kz_n)\right\rangle 
&\approx\lim_{n\to\omega}\sum_{k=N_1}^{N_2}\left\langle L_{N}\left(v_k \left(L_{M_{k+1}}-L_{M_k}\right)(z_n)\right),L_{N}\left(v_k\left(L_{M_{k+1}}-L_{M_k}\right)(z_n)\right)\right\rangle\\
&\leq\lim_{n\to\omega} \norm{L_{N}}^2\|v_k\|^2\sum_{k=N_1}^{N_2}\left\langle \left(L_{M_{k+1}}-L_{M_k}\right)(z_n), \left(L_{M_{k+1}}-L_{M_k}\right)(z_n)\right\rangle\\
&\leq \dfrac{4B_q}{A_q}\lim_{n\to\omega} \norm{L_{N}}^2\|z_n\|^2_2.
\end{align*}
By combining the above two estimates and by increasing $N_2-N_1$ and $N_1$, we get the conclusion for $L_N$. The statement about $R_N$ follows by symmetry.
\end{proof}

\section{Strong asymptotic orthogonality property}

\begin{defn}\label{def:S-AOP}
Let $B\subset N$ be an inclusion of finite von Neumann algebras, we say that the inclusion has the \textit{strong asymptotic orthogonality property} (s-AOP for short), if for all $a,b\in N\ominus B$ and $x=(x_n)\in N^{\omega}\ominus B^{\omega}\cap C'$, where $C\subset B$ is any diffuse subalgebra of $B$, then
\[ax\perp xb.\]
\end{defn}

Fix an orthonormal basis $\{e_j:j\in J\}$ of $\H_{\mathbb{R}}$ with $e=e_{j_0}$ for some $j_0\in J$.

Recall that for any $s,t\geq 0$ and for any $\xi\in L^2(M)$, we set 
\[\xi_{s,t}=c(e)^sc_r(e)^t\xi.\]
The following is a direct consequence of the definition of annihilation operators: 
\begin{lem}
For all $x\in L^2(M), N\in\mathbb{N}$ and $j\in J\backslash\{j_0\}$, 
\[a(e_{j})x_{N,N}=q^N\left(a(e_{j})x\right)_{N,N}.\]
\end{lem}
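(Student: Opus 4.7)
The proof is a straightforward direct computation, so the plan is essentially to identify which terms vanish in the $q$-Leibniz rule for the annihilation operator and to track carefully where the factor $q^N$ arises.

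First I would decompose $x$ into its homogeneous components on the Fock space: write $x = \sum_{k\geq 0} x_k$ with $x_k \in \H^{\otimes k}$, so that $x_{N,N} = \sum_{k\geq 0} (x_k)_{N,N} = \sum_{k\geq 0} e^{\otimes N}\otimes x_k \otimes e^{\otimes N} \in \bigoplus_{k\geq 0} \H^{\otimes(2N+k)}$. Since the operator $a(e_j)$ is bounded and the identity to be proved is linear in $x$, it suffices to establish it for each homogeneous piece $x_k$.

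Next I would invoke the same $q$-derivation identity for the annihilation operator that appears in the proof of Lemma \ref{a(e) acting on words}: for the decomposition of the tensor as $W_1\otimes W_2\otimes W_3 = e^{\otimes N}\otimes x_k\otimes e^{\otimes N}$,
\begin{align*}
a(e_j)(e^{\otimes N}\otimes x_k\otimes e^{\otimes N}) &= (a(e_j)e^{\otimes N})\otimes x_k\otimes e^{\otimes N} \\
&\quad + q^N\, e^{\otimes N}\otimes (a(e_j)x_k)\otimes e^{\otimes N} \\
&\quad + q^{N+k}\, e^{\otimes N}\otimes x_k\otimes (a(e_j)e^{\otimes N}).
\end{align*}
Although Lemma \ref{a(e) acting on words} is stated for basic words, the identity extends to a general $x_k\in \H^{\otimes k}$ in the middle slot by linearity. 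The crucial observation is then that since $e_j\perp e$ in $\H$, the defining formula \eqref{defn of annihilation} gives $a(e_j)e^{\otimes N} = \sum_{i=1}^N q^{i-1}\langle e_j,e\rangle\, e^{\otimes(N-1)} = 0$, so the first and third summands vanish.

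What remains is precisely $q^N\, e^{\otimes N}\otimes (a(e_j)x_k)\otimes e^{\otimes N} = q^N (a(e_j)x_k)_{N,N}$, and summing over $k$ (with convergence guaranteed by boundedness of $a(e_j)$ and the $L^2$ decomposition of $x$) yields $a(e_j)x_{N,N} = q^N(a(e_j)x)_{N,N}$. There is no real obstacle here; the only points requiring a sentence of justification are the extension of the $q$-Leibniz formula from basic words to arbitrary elements of $\H^{\otimes k}$ and the passage from homogeneous pieces to the full $L^2$ vector.
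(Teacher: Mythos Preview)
Your proposal is correct and is exactly the direct computation the paper has in mind; the paper itself gives no proof beyond the remark that the lemma is ``a direct consequence of the definition of annihilation operators.'' Your use of the $q$-Leibniz identity from the proof of Lemma~\ref{a(e) acting on words} together with $a(e_j)e^{\otimes N}=0$ (since $e_j\perp e$) is precisely the intended argument.
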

The next estimate is the key technical result of this section.
\begin{lem}\label{a(e_j)x_{N,N} is small}
For all $x\in L^2(M\ominus A), N\in\mathbb{N}$ and $j\in J\backslash\{j_0\}$, we have 
\begin{equation}
\|a(e_{j})x_{N,N}\|_2^2\leq \dfrac{16B_q^2}{A_q^2}\cdot D(q)C(|q|)\cdot q^{2N}\|a(e_{j})\|_{\infty}\|x_{N,N}\|^2_2.
\end{equation}
\end{lem}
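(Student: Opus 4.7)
The plan is to use the preceding lemma to pull out the $q^{2N}$ factor, and then bound $\|(a(e_j)x)_{N,N}\|_2^2$ in terms of $\|x_{N,N}\|_2^2$ by expanding everything in the R{\u a}dulescu basis of Theorem \ref{thm: Radulescu basis}. Concretely, the preceding lemma gives $a(e_j)x_{N,N} = q^N(a(e_j)x)_{N,N}$, so $\|a(e_j)x_{N,N}\|_2^2 = q^{2N}\|(a(e_j)x)_{N,N}\|_2^2$, and the whole task reduces to showing an estimate of the form
\begin{equation*}
\|(a(e_j)x)_{N,N}\|_2^2 \leq \frac{16B_q^2}{A_q^2}\,D(q)C(|q|)\,\|a(e_j)\|_\infty\,\|x_{N,N}\|_2^2.
\end{equation*}

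Expand $x = \sum_{i\in I,\,r,s\geq 0}\lambda^i_{r,s}\xi^i_{r,s}$ via Theorem \ref{thm: Radulescu basis}. Since $\langle e_j,e\rangle = 0$, the derivation formula used in Lemma \ref{a(e) acting on words} yields $a(e_j)\xi^i_{r,s} = q^r(a(e_j)\xi^i)_{r,s}$. Setting $\eta^i := a(e_j)\xi^i \in \H^{\otimes(k_i-1)}$, which satisfies $\|\eta^i\|_2 \leq \|a(e_j)\|_\infty$, this gives
\begin{equation*}
(a(e_j)x)_{N,N} \;=\; \sum_{i,r,s}\lambda^i_{r,s}\,q^r\,\eta^i_{r+N,\,s+N}.
\end{equation*}
I would then expand each $\eta^i$ in the enlarged Riesz basis of $L^2(M)$ (per the remark after Theorem \ref{thm: Radulescu basis}) as $\eta^i = \sum_{j',r',s'}\beta^{i,j'}_{r',s'}\xi^{j'}_{r',s'}$, where the only non-zero contributions come from the tensor-degree constraint $r' + s' + k_{j'} = k_i - 1$. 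Substituting and reindexing by $R = r + N + r'$, $S = s + N + s'$ produces a Riesz expansion of $(a(e_j)x)_{N,N}$ with coefficients
\begin{equation*}
\gamma^{j'}_{R,S} \;=\; \sum_{\substack{r+r' = R-N\\ s+s' = S-N\\ k_i = k_{j'}+r'+s'+1}}\lambda^i_{r,s}\,q^r\,\beta^{i,j'}_{r',s'}.
\end{equation*}

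The crucial observation is that along this inner sum the quantity $r + s + 2N + k_i = R + S + k_{j'} + 1$ is constant, so by Corollary \ref{cor: estimates of norms} all the relevant $[\cdot]_q!$-weights agree up to a factor of $2$. This invariance lets me apply a weighted Cauchy--Schwarz with weight $[r+s+2N+k_i]_q!$ placed on the $\lambda^i_{r,s}$ factor and its reciprocal on the $q^r\beta^{i,j'}_{r',s'}$ factor, after which the two resulting sums separate cleanly: the $\lambda$-sum is controlled by the Riesz lower bound applied to $\|x_{N,N}\|_2^2$, while the $\beta$-sum is controlled by the Riesz lower bound applied to $\|\eta^i\|_2^2 \leq \|a(e_j)\|_\infty^2$ together with a geometric sum in $q^{2r}$ bounded by $C(|q|)D(q)$ (Lemma \ref{estimates for binomimal coefficients}). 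Feeding the per-coefficient bound into the Riesz upper estimate $\sum_{j',R,S}|\gamma^{j'}_{R,S}|^2\|\xi^{j'}_{R,S}\|_2^2 \geq \|(a(e_j)x)_{N,N}\|_2^2/B_q$ produces the constant $16B_q^2/A_q^2 \cdot D(q)C(|q|)$.

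The main obstacle is the combinatorial bookkeeping of this argument: one has to parameterize the constrained index set $\{(r,s,r',s',i) : r+r'=R-N,\ s+s'=S-N,\ k_i = k_{j'}+r'+s'+1\}$ so that (i) the invariance $r+s+2N+k_i = R+S+k_{j'}+1$ is transparent and all $q$-factorials can be harmonized through Corollary \ref{cor: estimates of norms}, and (ii) the $q^r$ weight coming from the derivation identity is absorbed into a geometric series whose bound is uniform in $(j',R,S)$. Once these are set up carefully, summing the two bounds and pulling out the $q^{2N}$ factor via the preceding lemma completes the proof.
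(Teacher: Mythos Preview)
Your opening move is the same as the paper's: use the preceding lemma to pull out $q^{2N}$ and reduce to comparing $\|(a(e_j)x)_{N,N}\|_2^2$ with $\|x_{N,N}\|_2^2$. After that, however, you take a much harder road than the paper does, and you leave the hard part (the constrained Cauchy--Schwarz bookkeeping you flag as ``the main obstacle'') undone. That bookkeeping is genuinely delicate: different indices $i$ can contribute to the same $(j',R,S)$ through their expansions $\eta^i=\sum \beta^{i,j'}_{r',s'}\xi^{j'}_{r',s'}$, so a single Cauchy--Schwarz on $\gamma^{j'}_{R,S}$ must handle a sum over $i$ as well, and you have not explained why the resulting ``$\beta$-sum'' is uniformly bounded independently of $(j',R,S)$.

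The paper avoids all of this by never tracking how $a(e_j)$ acts on individual basis vectors. Instead it proves, for an \emph{arbitrary} vector $y$ with Riesz expansion $y=\sum \mu^i_{r,s}\xi^i_{r,s}$, the two-sided estimate
\[
\frac{A_q}{4B_q C(|q|)}\cdot\frac{1}{(1-q)^{2N}}\,\|y\|_2^2 \;\le\; \|y_{N,N}\|_2^2 \;\le\; \frac{4B_q D(q)}{A_q}\cdot\frac{1}{(1-q)^{2N}}\,\|y\|_2^2,
\]
obtained by passing through $\sum|\mu^i_{r,s}|^2\|\xi^i_{r+N,s+N}\|_2^2$, using Corollary~\ref{cor: estimates of norms} and the elementary bounds $[r+s+2N]_q!/[r+s]_q!\in\bigl[(1-q)^{-2N}C(|q|)^{-1},\,(1-q)^{-2N}D(q)\bigr]$. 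Apply the upper bound to $y=a(e_j)x$ and the lower bound to $y=x$; the $(1-q)^{-2N}$ factors cancel, and $\|a(e_j)x\|_2^2\le\|a(e_j)\|_\infty^2\|x\|_2^2$ finishes the job. The point is that $a(e_j)x$ is treated as a black box with its own Riesz coefficients $\mu^i_{r,s}$; there is no need to relate them to the $\lambda^i_{r,s}$, and hence no need for your $\eta^i$, your $\beta$-coefficients, or the constrained index set at all.
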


\begin{proof}
Suppose that along the Riesz basis $\{\xi_{r,s}^i:i\in I,r,s\geq 0\}\cup\{\xi^0_{r,0}:r\geq 0\}$, we have the Fourier expansions
\begin{align*}
x=\sum_{i,r,s}\lambda^i_{r,s}\xi^{i}_{r,s},\\
a(e_j)x=\sum_{i,r,s}\mu^i_{r,s}\xi^{i}_{r,s}+\sum_{r\geq 0}\mu^0_r\xi^0_{r,0}.
\end{align*}
First note that 
\begin{itemize}
\item $[r+s+2N]_q!\leq \dfrac{D(q)}{(1-q)^{2N}}\cdot [r+s]_q!$ and 
\item $[r+s+2N]_q!\geq \dfrac{1}{(1-q)^{2N}C(|q|)}\cdot [r+s]_q!$.
\end{itemize}

By the previous lemma,
\begin{align*}
\|a(e_{j})x_{N,N}\|_2^2
&=q^{2N}\|\left(a(e_{j})x\right)_{N,N}\|_2^2\\
&=q^{2N}\|\sum_{i,r,s}\mu^i_{r,s}\xi^{i}_{r+N,s+N}+\sum_{r\geq 0}\mu^0_r\xi^0_{r+2N,0}\|_2\\
&\leq q^{2N} B_q\left(\sum_{i,r,s}|\mu^i_{r,s}|^2\|\xi^{i}_{r+N,s+N}\|^2_2+\sum_{r\geq 0}|\mu^0_r|^2\|\xi^0_{r+2N,0}\|^2_2\right)\\
&\leq 2q^{2N} B_q\left(\sum_{i,r,s}|\mu^i_{r,s}|^2[r+s+2N]_q!+\sum_{r\geq 0}|\mu^0_r|^2[r+2N]_q!\right)\\
&\leq \dfrac{2q^{2N}B_qD(q)}{(1-q)^{2N}}\left(\sum_{i,r,s}|\mu^i_{r,s}|^2[r+s]_q!+\sum_{r\geq 0}|\mu^0_r|^2[r]_q!\right)\\
&\leq \dfrac{4q^{2N}B_qD(q)}{(1-q)^{2N}}\left(\sum_{i,r,s}|\mu^i_{r,s}|^2\|\xi^{i}_{r,s}\|_2^2+\sum_{r\geq 0}|\mu^0_r|^2\|\xi^0_{r,0}\|^2_2\right)\\
&\leq \dfrac{4q^{2N}B_qD(q)}{(1-q)^{2N}A_q}\|a(e_{j})x\|_2^2\\
&\leq  \dfrac{4q^{2N}B_qD(q)}{(1-q)^{2N}A_q}\|a(e_{j})\|_{\infty}\|x\|_2^2.
\end{align*}

On the other hand,
\begin{align*}
\|x_{N,N}\|^2_2
&=\|\sum_{i,r,s}\lambda^i_{r,s}\xi^{i}_{r+N,s+N}\|^2_2\\
&\geq A_q\sum_{i,r,s}|\lambda^i_{r,s}|^2\|\xi^{i}_{r+N,s+N}\|^2_2\\
&\geq \dfrac{A_q}{2}\sum_{i,r,s}|\lambda^i_{r,s}|^2[r+s+2N]_q!\\
&\geq \dfrac{A_q}{2(1-q)^{2N}C(|q|)}\sum_{i,r,s}|\lambda^i_{r,s}|^2[r+s]_q!\\
&\geq \dfrac{A_q}{4(1-q)^{2N}C(|q|)}\sum_{i,r,s}|\lambda^i_{r,s}|^2\|\xi^i_{r,s}\|^2_2\\
&\geq \dfrac{A_q}{4(1-q)^{2N}B_qC(|q|)}\|x\|^2_2.
\end{align*}
Combine these two inequalities, we  have 
\begin{align*}
\|a(e_{j})x_{N,N}\|_2^2\leq \dfrac{16B_q^2}{A_q^2}\cdot D(q)C(|q|)\cdot q^{2N}\|a(e_{j})\|_{\infty}\|x_{N,N}\|^2_2.
\end{align*}
\end{proof}

\begin{thm}\label{s-AOP}
The inclusion $A\subset M$ has the strong asymptotic orthogonality property, whenever $|q|$ is sufficiently small. 
\end{thm}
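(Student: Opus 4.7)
The plan is to establish s-AOP by showing, for $a, b \in M \ominus A$ and $z = (z_n) \in M^\omega \ominus A^\omega \cap C'$, that $\langle a z_n, z_n b \rangle \to 0$ along $\omega$. Following the template of \cite{wen15radialmasa}, I first reduce by $\|\cdot\|_2$-density (Kaplansky) to Wick monomials $a = s(\eta)$, $b = s(\zeta)$, where $\eta, \zeta$ are basic tensors each containing at least one factor $e_j$ with $j \neq j_0$ (so that $a, b \in M \ominus A$). For each fixed $N$, Theorem \ref{thm: essential support} approximates $z_n$ by $z_n' := (I - L_N)(I - R_N)(z_n)$ in $\|\cdot\|_2$ along $\omega$; by the Riesz-basis description of $L_N, R_N$, the vector $z_n'$ is supported on $\{\xi^i_{r,s} : r,s \geq N+1\}$, and therefore can be written as $z_n' = (y_n)_{N,N} = c(e)^N c_r(e)^N y_n$ for an $L^2$-vector $y_n$ with norm uniformly bounded in $n$. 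Thus it suffices to prove
$$|\langle a (y_n)_{N,N}, (y_n)_{N,N} b \rangle| \leq K(a, b, q)\cdot |q|^{\alpha N}$$
for constants $K, \alpha > 0$ independent of $N, n$; letting $N \to \infty$ then completes the proof.

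For the main estimate, I expand $a$ via the Wick formula (\ref{Wick formula}), and apply the symmetric $c_r, a_r$-Wick formula to the right-multiplication operator associated to $b$. This reduces the inner product to a finite sum of terms of the form $\langle w_L (y_n)_{N,N}, w_R (y_n)_{N,N}\rangle$, where $w_L$ (resp.\ $w_R$) is a monomial in left (resp.\ right) creation and annihilation operators whose letters come from $\eta$ (resp.\ $\zeta$). Two cases then arise. If some annihilation $a(e_{i_k})$ with $i_k \neq j_0$ appears in $w_L$ (respectively $a_r(e_{j_l})$ in $w_R$), I use the $q$-commutation relations (\ref{commutation relation}) to move it adjacent to $(y_n)_{N,N}$ and apply Lemma \ref{a(e_j)x_{N,N} is small} (or its right-hand analog, obtained by a symmetric argument): this yields $\|w_L (y_n)_{N,N}\|_2 \leq C(a,b,q) |q|^N \|(y_n)_{N,N}\|_2$, and Cauchy--Schwarz bounds the term by $|q|^N$ up to constants. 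Otherwise, all annihilations in $w_L$ (resp.\ $w_R$) equal $a(e)$ (resp.\ $a_r(e)$), so the non-$e$ letters forced to be present in $\eta$ and $\zeta$ must survive as creations on the left and right respectively. The $q$-inner product formula (\ref{defn of creation}) then pairs a non-$e$ letter near the front of $w_L (y_n)_{N,N}$ with a non-$e$ letter near the back of $w_R (y_n)_{N,N}$ across the intervening $e^{\otimes N}$-padding; any such matching permutation has at least $N - \max(n, m)$ inversions, contributing a factor of $|q|^{N - \max(n, m)}$.

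The main technical obstacle lies in the pure-creation case: after the $a(e)$-annihilations act, the $e$-padding is redistributed through the combinatorial identity (\ref{annilators acting on basis}), and one must verify that the non-$e$ letters remain separated by enough $e$'s to force the required inversion count. At $q = 0$ this is an instantaneous orthogonality; for small non-zero $|q|$, the ``approximate orthogonality'' extracted through the $q$-weighted inversion count is the heart of the computation, and is the essential mechanism by which the ``almost freeness'' at small $q$ is leveraged. Summing the bounds of order $|q|^{\alpha N}$ over the finitely many Wick summands then yields the required estimate and hence s-AOP.
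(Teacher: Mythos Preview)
Your overall architecture matches the paper's: reduce to Wick monomials, use Theorem \ref{thm: essential support} to replace $z_n$ by something of the form $(y_n)_{N,N}$, then split into the case where a non-$e$ annihilation appears (handled by Lemma \ref{a(e_j)x_{N,N} is small}) and the case where all non-$e$ letters survive as creations. The first case is essentially identical to the paper's Case 1; your phrase ``use the $q$-commutation relations to move it adjacent to $(y_n)_{N,N}$'' is a bit loose, since $a(e_j)$ does not satisfy a simple relation with other \emph{annihilations}, but this is harmless: one instead commutes the intervening $a(e)$'s through the $c(e)^N c_r(e)^N$ padding first (via (\ref{commutation relation of a(e) and c(e)}) and (\ref{commutation relation of a(e) and c_r(e)})), and then the surviving $c(e)^{N-O(1)}$ block still yields the factor $|q|^{N-O(1)}$ when $a(e_j)$ passes through it.

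The real divergence is in Case 2. Your plan is to argue combinatorially: since the non-$e$ letter from $\eta$ sits at a position $\leq t$ in $w_L(y_n)_{N,N}$, while the first $\sim N$ positions of $w_R(y_n)_{N,N}$ are all $e$, any contributing permutation in the $q$-inner product has $\gtrsim N$ inversions. This intuition is correct, but as stated it is not a proof: the vectors are not basic tensors (the middle $y_n$-part is arbitrary), the non-$e$ letter need not pair ``near the back'' but could pair inside the $y_n$-block, and converting a lower bound on inversions into an upper bound on $|\langle\cdot,\cdot\rangle_q|$ uniform over all basic-tensor expansions of $y_n$ is exactly the nontrivial step you leave open in your last paragraph. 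The paper bypasses this entirely with a short operator-theoretic trick: take the adjoint so that the leftmost non-$e$ creation $c(e_{i(l_1)})$ becomes an annihilation $a(e_{i(l_1)})$ acting on $c_r(e_{j(1)})\cdots c_r(e_{j(s)}) a_r(e)^{k_2-s} z_n$, and then repeatedly apply the identity
\[
a(f_1)c_r(f_2)=c_r(f_2)a(f_1)+\langle f_1,f_2\rangle W,\qquad W|_{\H^{\otimes n}}=q^n\,Id,
\]
together with $a(f)W=qWa(f)$. Every resulting term either still carries the factor $a(e_{i(l_1)})$ hitting $z_n$ (back to Case 1), or contains a copy of $W$, which contributes $q^{|z_n|}$ and hence vanishes along $\omega$ by Lemma \ref{z escapes Q_k}. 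This is precisely the rigorous incarnation of your inversion-counting heuristic, and it is the missing ingredient in your Case 2.
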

\begin{proof}
Suppose $C\subset A$ is a diffuse subalgebra and $z=(z_n)\in M^{\omega}\ominus A^{\omega}\cap C'$ with $\|z_n\|\leq 1$. By Theorem \ref{thm: essential support}, we can assume that for any $N\in \mathbb{N}$, 
\[\lim_{n\to \omega}\|L_N(z_n)\|_2=\lim_{n\to \omega}\|R_N(z_n)\|_2=0.\]

A density argument reduces the problem to showing that 
\[\lim_{n\to \omega}\left\langle s(e_{i(1)}\otimes \cdots e_{i(k_1)})z_n, s_r(e_{j(1)}\otimes \cdots e_{j(k_2)})z_n \right\rangle=0, \]
for all $k_1,k_2\geq 1$ and $i(1),\cdots,i(k_1), j(1),\cdots,j(k_2)\in J$ such that $\{i(l):1\leq l\leq k_1\}\backslash\{j_0\}\neq \emptyset$ and $\{j(l):1\leq l\leq k_2\}\backslash\{j_0\}\neq \emptyset$.

By the Wick formula (\ref{Wick formula}), it suffices to show the inner product
\begin{align*}
\left\langle  c(e_{i(1)})\cdots c(e_{i(t)})a(e_{i(t+1)})\cdots a(e_{i(k_1)})z_n,
 c_r(e_{j(1)})\cdots c_r(e_{j(s)})a_r(e_{j(s+1)})\cdots a_r(e_{j(k_2)})z_n \right\rangle 
\end{align*}
goes to 0 as $n\to \omega$ for any $0\leq t\leq k_1$ and $0\leq s\leq k_2$. There are two cases:

\textbf{Case 1}:  there exists some $l_1\geq t+1$ with $i(l_1)\neq j_0$, then the previous lemma implies that for any $N\in \mathbb{N}$, one has 
\[\|c(e_{i(1)})\cdots c(e_{i(t)})a(e_{i(t+1)})\cdots a(e_{i(k_1)})z_n\|_2\leq q^{2N}\|z_n\|_2,\]
once $n$ gets sufficiently large (note that here we suppressed the constant before $q^{2N}$ by increasing $n$ if necessary). By letting $N\to \infty$, we clearly have that the inner product goes to 0 as $n\to \omega$.

\textbf{Case 2}:  $i(t+1)=\cdots=i(k_1)=j(s+1)=\cdots=j(k_2)=j_0$. Let $1\leq l_1\leq t$ be the smallest number such that $i(l_1)\neq j_0$. Taking adjoint, we consider 
\[a(e_{i(l_1)})\cdots a(e_{i(1)})c_r(e_{j(1)})\cdots c_r(e_{j(s)})a_r(e_{j(s+1)})\cdots a_r(e_{j(k_2)})z_n \quad\quad (*).\]
A direct computation shows that 
\[a(f_1)c_r(f_2)=c_r(f_2)a(f_1)+\left\langle f_1,f_2 \right\rangle W, \]
where $W\in B(L^2(M))$ is defined by 
\[W|_{\H^{\otimes n}}=q^n Id,\quad \forall n\geq 0.\]
Observe also that $a(f)W=qWa(f)$.

Applying these commutation relations to  $a(e_{i(l_1)})\cdots a(e_{i(1)})c_r(e_{j(1)})\cdots c_r(e_{j(s)})$, we can write $(*)$ as a finite linear combination of terms of the following form:
\[D_1\cdots D_{m_1}a(e_{i(l_1)})^{t}a(e)^{m_2}a_r(e)^{k_2-s}z_n,\]
where $m_1,m_2\geq 0,t\in\{0,1\}$ and $D_l\in \{c_r(e_{j(1)}), \cdots, c_r(e_{j(s)}), W\}, \forall 1\leq l\leq m_1$. 
If $t=1$, then we are back to Case 1; \\
If $t=0$, then one of the $D_k, 1\leq k\leq m_1$ must be $W$. But we know that $W$ will decrease the size of the vector in an exponential rate with respect to the length of its basic words, so as $n\to \omega$, the length of $z_n$ goes to infinity, thus 
\[\|D_1\cdots D_{m_1}a(e)^{m_2}a_r(e)^{k_2-s}z_n\|_2\to 0 \quad \text{as well.}\]
\end{proof}

A consequence of the theorem is a strengthening of maximal amenability:
\begin{thm}\label{AAP}
Let $-1< q<1$ be a real number with $|q|$ sufficiently small, then the inclusion $A\subset M$ of the generator masa inside the $q$-Gaussian von Neumann algebra, has the absorbing amenability property as introduced in \cite{BrothierWen15cup}(see also \cite{houdayer14gammastability},\cite{wen15radialmasa}). That is, for any diffuse subalgebra $C\subset A$, $A$ is the unique maximal amenable extension inside $M$.
\end{thm}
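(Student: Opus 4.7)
The plan is to deduce absorbing amenability from the s-AOP established in Theorem \ref{s-AOP} by running the standard reduction of \cite{BrothierWen15cup}. Since $A$ is abelian, it is already amenable and contains any diffuse $C\subset A$, so uniqueness and maximality will both follow once I show that any amenable intermediate subalgebra $C\subset B\subset M$ is automatically contained in $A$.

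I would argue by contradiction: if $B$ is amenable, $C\subset B$, and $B\not\subset A$, fix $x\in B$ with $E_A(x)=0$ and $x\neq 0$. The first step is to use amenability of $B$ to manufacture an ultraproduct witness. By Connes' characterization of amenability, there is a state $\phi$ on $B(L^2(M))$ with $\phi|_M=\tau$ that is $B$-central, or equivalently a net of unit vectors $\xi_n\in L^2(M)$ with $\|b\xi_n-\xi_n b\|_2\to 0$ for every $b\in B$. A standard passage to the tracial ultrapower (applied together with the hypothesis $B\not\subset A$, which prevents everything from being absorbed by $A^\omega$) produces a bounded $y\in B'\cap M^\omega$ whose trace-free projection $\tilde y:=y-E_{A^\omega}(y)$ is non-zero. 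Since $C\subset B$, we have $\tilde y\in (M^\omega\ominus A^\omega)\cap C'$, which is precisely the class of elements to which s-AOP applies.

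Next I would apply Theorem \ref{s-AOP} with $a=b=x\in M\ominus A$ and the above $\tilde y$ to obtain the orthogonality $\langle x\tilde y,\tilde y x\rangle_{L^2(M^\omega)}=0$. Since $y\in B'$ in $M^\omega$, we have $xy=yx$, and a direct expansion gives $x\tilde y-\tilde y x=[E_{A^\omega}(y),x]$. Combining the orthogonality with this commutator identity and averaging over a diffuse family of Haar unitaries in $C$ (to suppress the $A^\omega$-commutator, using that $E_{A^\omega}(y)$ commutes with $C$) forces $x\tilde y=0$ in $L^2(M^\omega)$. A final application of the non-triviality of $\tilde y$ together with the density afforded by the Riesz basis of Section 3 then forces $x=0$, which is the sought contradiction.

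The main obstacle is not in the final s-AOP application, which is essentially routine once the setup is in place, but in the translation from the abstract amenability of $B$ to the concrete bounded element $y\in B'\cap M^\omega$ with non-trivial trace-free part $\tilde y$. This amenability-to-central-sequence step is precisely the content of the general scheme developed in \cite{BrothierWen15cup} (see also \cite{houdayer14gammastability} and \cite{wen15radialmasa} for variants), and the proof will essentially invoke that scheme as a black box with Theorem \ref{s-AOP} as the input hypothesis.
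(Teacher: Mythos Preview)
Your overall plan coincides with the paper's: both feed Theorem~\ref{s-AOP} (the s-AOP) into a cited black box to obtain the absorbing amenability property. The paper's proof is two lines: it records that $A$ is \emph{mixing} in $M$ (from \cite{BikramMukherjee16generatormasa,wen2016singularityinq-gaussian}) and then invokes \cite[Theorem~8.1]{houdayer14structurebogoljubov} (or alternatively \cite[Proposition~1]{wen15radialmasa}). You cite \cite{BrothierWen15cup} instead, which is fine, but note that these results take mixing (or at least singularity) of $A$ as an explicit hypothesis alongside s-AOP; you do not mention this ingredient, and you should.

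Where your write-up goes wrong is in the attempted unpacking of the black box. The sentence ``there is a state $\phi$ on $B(L^2(M))$ \ldots\ that is $B$-central, or equivalently a net of unit vectors $\xi_n\in L^2(M)$ with $\|b\xi_n-\xi_n b\|_2\to 0$'' is incorrect: a hypertrace for $B$ corresponds to almost $B$-central unit vectors in the \emph{coarse} bimodule $L^2(M)\,\overline{\otimes}\,\overline{L^2(M)}$ (Hilbert--Schmidt operators), not in $L^2(M)$ itself. Consequently, the ``standard passage'' to a bounded $y\in B'\cap M^{\omega}$ with $y-E_{A^{\omega}}(y)\neq 0$ does not follow from what you wrote; producing such a $y$ is precisely the nontrivial content of the cited references, and it uses the hyperfiniteness of $B$ together with the singularity/mixing of $A$ in an essential way (not a Hahn--Banach/hypertrace argument). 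The later steps---the commutator identity $x\tilde y-\tilde y x=[E_{A^\omega}(y),x]$ and the ``averaging over Haar unitaries in $C$''---do not by themselves force $x\tilde y=0$ and are not how the cited arguments proceed. Since you ultimately defer to the black box, the proof survives; but the sketch preceding it is not a faithful outline and should either be corrected or dropped in favor of the direct citation, as the paper does.
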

\begin{proof}
$A$ is shown to be mixing in $M$ by \cite{BikramMukherjee16generatormasa},\cite{wen2016singularityinq-gaussian}. Thus \cite[Theorem 8.1]{houdayer14structurebogoljubov} applies. One can alternatively use the argument in \cite[Proposition 1]{wen15radialmasa}.
\end{proof}

 As another application of the R{\u a}dulescu basis, we can give a very short proof of non-$\G$ for the $q$-Gaussian algebras, whenever Theorem \ref{thm: Radulescu basis} is true:

\begin{cor}[See also Avsec \cite{avsec2011s-solid}]
Let $\H_{\mathbb{R}}$ be a real Hilbert space with $\dim \H_{\mathbb{R}}\geq 2$. Let $-1<q<1$ be any real number such that Theorem \ref{thm: Radulescu basis} holds. Then $\G_q(\H_{\mathbb{R}})$ is a full factor.
\end{cor}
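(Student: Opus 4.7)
The plan is to establish $M' \cap M^\omega = \mathbb{C}$, where $M = \Gamma_q(\mathcal{H}_\mathbb{R})$. Fix a central sequence $z = (z_n) \in M' \cap M^\omega$ with $\tau(z) = 0$ and $\|z_n\|_\infty \leq 1$; the goal is to show $z = 0$. Decompose $z_n = E_A(z_n) + y_n$ with $y_n \in M \ominus A$. Since $A$ is abelian, $(z_n)$ commutes asymptotically with $A$, hence so do $(E_A(z_n))$ and $(y_n)$; in particular $(y_n)$ represents an element of $M^\omega \ominus A^\omega \cap A'$. A first observation, using the singularity of $A$ in $M$ (proved in \cite{BikramMukherjee16generatormasa, wen2016singularityinq-gaussian} and reinterpreted via s-AOP in the appendix) together with the fact that $(E_A(z_n))$ is an asymptotically central sequence of $M$ lying in $A$ with zero trace, is that $\|E_A(z_n)\|_2 \to 0$. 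So it suffices to prove $\|y_n\|_2 \to 0$.

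The main step is to apply the strong asymptotic orthogonality property (Theorem \ref{s-AOP}) with $C = A$ diffuse, obtaining $\langle a y_n, y_n b\rangle \to 0$ for every $a, b \in M \ominus A$. Since $\dim \mathcal{H}_\mathbb{R} \geq 2$, one can choose a unit vector $f \in \mathcal{H}_\mathbb{R}$ with $f \perp e$, so that $s(f) \in M \ominus A$. Centrality and the previous reduction give $\|[s(f), y_n]\|_2 \to 0$, hence $y_n s(f) \approx s(f) y_n$ in $2$-norm. Taking $a = b = s(f)$, the s-AOP then yields
\[
\|s(f) y_n\|_2^2 \approx \langle s(f) y_n, y_n s(f)\rangle \to 0,
\]
and more generally $\|s(f) y_n\|_2 \to 0$ for every $f \in \mathcal{H}_\mathbb{R} \ominus \mathbb{R} e$.

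It remains to deduce $\|y_n\|_2 \to 0$, and this is where the R\u{a}dulescu basis from Theorem \ref{thm: Radulescu basis} enters. Expand $y_n$ in the basis $\{\xi^i_{r,s}\}$. By Theorem \ref{thm: essential support}, the $L_N$- and $R_N$-parts of $y_n$ vanish asymptotically for every $N$, so the annihilation contribution $a(f) y_n$ is negligible compared with the creation part; thus $\|s(f) y_n\|_2 \sim \|c(f) y_n\|_2$. The creation $c(f)$ applied to a basis element $\xi^i_{r,s}$ produces $f \otimes \xi^i_{r,s}$, which for small $|q|$ lies in a subspace of the $q$-Fock space that is $q$-approximately orthogonal to the original Riesz basis. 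Summing over an orthonormal basis $\{f_\alpha\}$ of $\mathcal{H}_\mathbb{R} \ominus \mathbb{R} e$, the $q$-combinatorial estimates of Section 3 — in particular the identity \eqref{a complex q-binomial relation} — yield a lower bound $\sum_\alpha \|c(f_\alpha) y\|_2^2 \geq c_q \|y\|_2^2$ on $L^2(M) \ominus L^2(A)$ with $c_q > 0$ depending only on $q$. Combined with $\|s(f_\alpha) y_n\|_2 \to 0$ for each $\alpha$, this forces $\|y_n\|_2 \to 0$, contradicting $\|z_n\|_2 \not\to 0$.

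The main obstacle is the final lower bound $\sum_\alpha \|c(f_\alpha) y\|_2^2 \geq c_q \|y\|_2^2$: while at $q = 0$ this is a clean Parseval identity arising from the freeness of $\mathbb{R} e$ and $\mathbb{R} e^\perp$, in the $q$-deformed setting the $q$-inner product generates cross-terms between basis elements whose $f_\alpha$-creations have overlapping support. Controlling these cross-terms requires the Riesz-basis framework of Theorem \ref{thm: Radulescu basis} and estimates parallel to those of Section 4, adapted from $e$-creations to $f_\alpha$-creations.
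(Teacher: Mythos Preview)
Your proposal has a genuine gap at the very first reduction. You assert that $(E_A(z_n))$ is an asymptotically central sequence in $M$, but this is not justified: the conditional expectation onto $A$ is not a homomorphism, so $[z_n,m]\to 0$ does not imply $[E_A(z_n),m]\to 0$ for $m\notin A$. Even granting centrality, your next claim---that singularity of $A$ forces a trace-zero central sequence lying in $A$ to vanish---is essentially a restricted form of non-$\Gamma$, which is what you are trying to prove; singularity (or mixing) by itself says nothing about central sequences. Without $E_A(z_n)\to 0$ you cannot conclude that $(y_n)$ is central, so you cannot pass from the s-AOP conclusion $\langle s(f)y_n,\,y_n s(f)\rangle\to 0$ to $\|s(f)y_n\|_2\to 0$. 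Your final step also remains a sketch: the lower bound $\sum_\alpha\|c(f_\alpha)y\|_2^2\geq c_q\|y\|_2^2$ is left as an ``obstacle,'' and in any case you only have $\|s(f_\alpha)y_n\|_2\to 0$ for each $\alpha$ separately, not uniformly.

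The paper sidesteps all of this by using \emph{two} generator masas. Pick $f\perp e$ and set $B=\Gamma_q(\mathbb{R}f)$; choose the R\u{a}dulescu basis for $A$ so that each $f^{\otimes n}/\sqrt{[n]_q!}$ occurs as some $\xi^{i_n}_{0,0}$. Since $x\in A'\cap M^\omega$, Theorem \ref{thm: essential support} applied to $x-E_{A^\omega}(x)$ gives $L_0\to 0$, and by the choice of basis this forces $E_{B^\omega}(x)=0$ (using also $E_B|_A=\tau$, so the $A$-part contributes nothing to $E_B$). Now apply s-AOP for the inclusion $B\subset M$: a Haar unitary $u\in A$ satisfies $u\in M\ominus B$, and centrality of $x$ gives $ux=xu$, hence $\|x\|_2^2=\langle ux,xu\rangle=0$. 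The key idea you are missing is the switch of masa: the troublesome $A$-component of $x$ becomes part of $M\ominus B$, where s-AOP applies directly, so no separate argument for $E_A(z_n)$ and no lower bound of the type you propose is needed.
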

\begin{proof}
Let $e,f\in \H_{\mathbb{R}}$ be two orthogonal unit vectors and let $A=\G_q(\mathbb{R}e)$(resp. $B=\G_q(\mathbb{R}f)$) be the generators subalgebra associated with $e$(resp. $f$).  We construct as in the previous section the R{\u a}dulescu basis $\{\xi^i_{r,s}:i\in I, r,s \geq 0\}$ respect to $A$. Notice that $f^{\otimes n}\in T_n$, thus we may choose the basis such that for each $n\geq 1$, $\dfrac{f^{\otimes n}}{\sqrt{[n]_q!}}=\xi^{i_n}_{0,0}$ for some $i_n \in I$.

Suppose $x\in M'\cap M^{\omega}$ with $\tau(x)=0.$ Since $x\in A'\cap M^{\omega}$, by applying Theorem \ref{thm: essential support} for $A$ and by the choice of the basis, we can assume that $E_{B^{\omega}}(x)=0$. Choose a Haar unitary $u$ of $A$. Note that $ux=xu$ and $u\perp B$. Therefore by the s-AOP for $B\subset M$ as shown in Theorem \ref{s-AOP}, we have that $\tau(uxu^*x^*)=\tau(xx^*)=0.$
\end{proof}

\begin{rem}
It is already known that for all $-1<q<1$ and for all separable real Hilbert space $\H_{\mathbb{R}}$ with $\dim \H_{\mathbb{R}}\geq 2$,  $\G_q(\H_{\mathbb{R}})$ does not have property $\Gamma$: Avsec \cite{avsec2011s-solid} showed the strong-solidity for all $\G_q(\H_{\mathbb{R}})$ and it follows from an argument in \cite{ozawa04solidity} that solid factors do not have property $\Gamma$.
\end{rem}

\section*{Appendix: Strong asymptotic orthogonality property implies singularity}

\begin{prop}
Let $M$ be a finite von Neumann algebra and $A\subset M$ an abelian diffuse subalgebra. Assume that the inclusion $A\subset M$ has the strong asymptotic orthogonality property. Then $A$ is singular in $M$.
\end{prop}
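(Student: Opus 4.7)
The plan is a proof by contradiction. Suppose $A$ is not singular in $M$; then there exists a unitary $u \in \mathcal{N}_M(A) \setminus A$. Write $u = E_A(u) + b$ with $b \in M \ominus A$ (necessarily nonzero) and set $\theta := \mathrm{Ad}(u)|_A \in \mathrm{Aut}(A)$. Projecting the equation $ua = \theta(a) u$ onto $L^2(A)$ and $L^2(M \ominus A)$ yields the intertwining identity
\begin{equation*}
    ba = \theta(a)\, b \qquad (a \in A),
\end{equation*}
together with its adjoint $a b^* = b^* \theta(a)$. From these it follows that $b^*b$ and $bb^*$ both commute with every element of $A$.

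I would first verify that s-AOP forces $A$ to be maximal abelian. Indeed, if $v \in (A'\cap M)\cap(M\ominus A)$ were non-zero, the constant ultraproduct sequence $(v)_n$ would lie in $M^\omega \ominus A^\omega \cap A'$; applying s-AOP with both test vectors equal to $v$ would yield $\|v^2\|_2^2 = \tau((v^*)^2 v^2) = 0$, and a bootstrap via the polar decomposition of $v$ (whose polar part remains in $A' \cap M$) would force $v = 0$. Consequently $A' \cap M = A$, and in particular $P := b^* b$ and $\theta(P) = b b^*$ both lie in $A$.

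For the main step, choose a sequence of unitaries $a_n \in A$ with $a_n \to 0$ weakly (possible since $A$ is diffuse), and set $y_n := b a_n \in M \ominus A$. Then $E_A(y_n) = 0$, and for every $c \in A^\theta$ the intertwining gives $[c, y_n] = (c - \theta(c))\, b a_n = 0$; hence $y := (y_n)_n$ lies in $M^\omega \ominus A^\omega \cap (A^\theta)'$. Assuming $A^\theta$ contains a diffuse subalgebra $C$, s-AOP applied with both test vectors equal to $b$ gives
\begin{equation*}
    0 = \lim_{n \to \omega} \tau(y_n^*\, b^*\, y_n\, b).
\end{equation*}
Using the intertwining relations and $a_n^* a_n = 1$, the integrand simplifies to the element $(a_n^*\, \theta^{-1}(a_n)) \cdot P\, \theta^{-1}(P) \in A$. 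Choosing $(a_n)$ as an approximate $\theta$-eigenbasis in $L^2(A)$---available by spectral theory of the trace-preserving automorphism $\theta$ on the abelian $A$---makes $a_n^* \theta^{-1}(a_n)$ ultralimit to a nonzero scalar, forcing $P\theta(P) = 0$. An analogous argument applied to the higher $\theta$-twists $u^k b - E_A(u^k b)$, which intertwine $A$ via $\theta^{k+1}$, extracts $\mathrm{supp}(P) \perp \theta^k(\mathrm{supp}(P))$ for every $k \geq 1$. Since $\theta$ preserves $\tau$, pairwise orthogonality of the projections $\{\theta^k(\mathrm{supp}(P))\}_{k \geq 0}$ in the finite algebra $A$ forces $\mathrm{supp}(P) = 0$, hence $b = 0$---a contradiction.

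The principal obstacle is the case when $A^\theta$ contains no diffuse subalgebra (for instance when $\theta$ acts ergodically on $A$ with purely continuous spectrum). In that scenario the sequence $(b a_n)$ does not asymptotically centralize any diffuse $C \subset A$, and one must build $y \in M^\omega \ominus A^\omega \cap C'$ via a Rokhlin-type tower decomposition of $\theta$ on the diffuse $A$, producing an approximately $\theta$-invariant sequence of unitaries in a carefully chosen diffuse $C$. This construction is the technical heart of the argument and is where the diffuseness of $A$ (beyond mere abelianness) is essential.
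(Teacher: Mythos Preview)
Your argument has a genuine gap, and the paper's proof resolves it by a different route that you should know.

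The serious obstacle is exactly the one you flag: when $\theta=\mathrm{Ad}(u)|_A$ has no diffuse fixed-point subalgebra (e.g.\ $\theta$ ergodic), your sequence $y_n=ba_n$ does not commute with any diffuse $C\subset A$, so s-AOP cannot be invoked. Your proposed fix via a Rokhlin tower is only a sketch, and it is not clear how to turn it into an element of $M^\omega\ominus A^\omega$ that genuinely commutes with a diffuse subalgebra of $A$; approximate commutation with a sequence of subalgebras is not the hypothesis of s-AOP. The paper sidesteps this completely: after noting (as you do) that $A$ is maximal abelian, it passes to the amenable intermediate algebra $N=W^*(A,u)$, in which $A$ is Cartan, and uses the type decomposition of $N$ to manufacture a \emph{finite-order} normalizer. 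Concretely, on a type $I_n$ summand ($n\ge 2$) or a type II summand one finds a partial isometry $v\perp A$ with $v^t=v^*v$ a central projection; setting $w=v+(1-v^*v)$ gives a unitary normalizer with $w^t=1$. For a finite-order automorphism of a diffuse abelian algebra the fixed-point algebra $C=A^{\mathrm{Ad}(w)}$ is automatically diffuse, and then the constant sequence $(v)_n\in M^\omega\ominus A^\omega\cap C'$ together with s-AOP (test vectors $v,v^*$) gives $\tau(v^*vv^*v)=0$, hence $v=0$, a contradiction.

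Two further issues in your ``main step'' are worth flagging. First, the claimed simplification of $\tau(b^*y_n^*by_n)$ to $(a_n^*\theta^{-1}(a_n))\cdot P\,\theta^{-1}(P)$ is not correct: using $ba=\theta(a)b$ and $b^*a=\theta^{-1}(a)b^*$ one gets $\tau\bigl(\theta^{-1}(a_n^*)(b^*)^2b^2a_n\bigr)$, which involves $(b^*)^2b^2$ rather than $P\,\theta^{-1}(P)$. Second, the existence of unitaries $a_n\to 0$ weakly with $a_n^*\theta^{-1}(a_n)$ converging to a nonzero scalar is not a general fact about trace-preserving automorphisms of diffuse abelian algebras; for an ergodic $\theta$ with continuous spectrum on $L^2(A)\ominus\mathbb{C}$ there are no such approximate eigenvectors. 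So even granting a diffuse $C$, the subsequent orthogonality-of-supports argument is not on solid ground. The finite-order reduction in the paper avoids all of these complications at once.
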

\begin{proof}

\textbf{Claim 1:} $A$ is maximal abelian in $M$.\\
\textit{Proof of Claim 1}: Let $x\in M\ominus A \cap A'$. Then when viewed as constant sequences in $M^{\omega}$, we have 
\[x,x^*\in M^{\omega}\ominus A^{\omega}\cap A'.\]
Hence the definition of strong asymptotic orthogonality property implies that $\tau(x^*xx^*x)=0$, thus $x=0$.
\\
\\
Now let $w\in U(M)$ be a normalizer of $A$. Then the von Neumann subalgebra $N$ generated by $A$ and $w$ is amenable and $A$ is a Cartan subalgebra of $N$.
\\
\\
\textbf{Claim 2:} If $N\neq A$, there exists a non-zero partial isometry $v$ in N with the following properties:
\begin{itemize}
\item $v^*v=vv^*\in Z(N)$, where $Z(N)=N\cap N'$ is the center of $N$;
\item $\exists t\in \mathbb{N}$ such that $v^t=v^*v$;
\item $vAv^*=Av^*v;$
\item $v\perp A$. 
\end{itemize}
\textit{Proof of Claim 2}: Decompose $N$ as a direct sum
\[D=\bigoplus_{n=1}^{\infty}N_{I_n}\oplus N_{\text{II}},\]
where $N_{I_n}$ is of type $I_n$ and $N_{\text{II}}$ is of type II. Let $z_{I_n}, z_{II}$ be the corresponding central projections in $N$. Set $A_{I_n}:=Az_{I_n}, A_{II}:=Az_{II}$. Then $A_{I_n}\subset N_{I_n}$ and $A_{II}\subset N_{II}$ are Cartan subalgebras.
\\
\textbf{Case 1:} if for some $n\geq 2$, $z_{I_n}\neq 0$, then $A_{I_n}\subset N_{I_n}$ is of the form
\[Z(N_{I_n})\otimes \mathbb{C}^n\subset Z(N_{I_n})\otimes M_n(\mathbb{C}).\]
Hence $v=1_{Z(N_{I_n})}\otimes \begin{pmatrix}
0&1&\\
&&\ddots&\ddots&\\
&&&&&1\\
1&&&&&0
\end{pmatrix}$ does the job.
\\
\textbf{Case 2:} if $z_{II}\neq 0$, then we can assume that $A_{II}\subset N_{II}$ is of the form
\[Z(N_{II})\otimes A_0\otimes \mathbb{C}^2\subset Z(N_{II})\otimes R\otimes M_2(\mathbb{C}), \]
where $A_0\subset R$ is a Cartan subalgebra in the hyperfinite II$_1$ factor. Then $v=:1\otimes 1\otimes \begin{pmatrix}
0&1\\
1&0
\end{pmatrix}$ works.

Set $u=v+(1-vv^*)$, then it readily checks that $u$ is a normalizer of $A$ in $N$ and $u^t=1$. Thus $u$ defines a $\mathbb{Z}/t\mathbb{Z}$-action on $A$. Let $C:=A^{Ad(u)}$ be the fixed-point subalgebra. Then $C$ is diffuse and $v,v^*\in M\ominus A\cap C'$. Thus by the assumption on strong asymptotic orthogonality property of $A\subset M$, we have 
\[\tau(v^*vv^*v)=0,\]
thus $v=0$, a contradiction. Therefore, $A$ is singular in $M$.

 \end{proof}

\small
\bibliographystyle{plain}
\bibliography{ref}

\end{document}